\documentclass[12pt]{article}
\usepackage{authblk}
\usepackage[utf8]{inputenc}
\usepackage{amsmath}
\usepackage{amsthm}
\usepackage{amscd}
\usepackage{tikz}
\usetikzlibrary{decorations.markings}
\usepackage[all]{xy}
\xyoption{2cell}
\SelectTips{cm}{}
\UseAllTwocells
\usepackage{enumitem}
\usepackage{xcolor}
\usepackage{multirow}
\usepackage{pgfplots}
\pgfplotsset{compat=1.16}
\usepackage[charter]{mathdesign}
\usepackage{geometry}
 \geometry{a4paper, left=20mm, right=20mm, top=25mm, bottom=25mm}
\usepackage{placeins}
\usepackage{changepage}

\newtheorem{theorem}			     {Theorem}	    [section]

\newtheorem{corollary}	  [theorem]	 {Corollary}
\newtheorem{lemma}	      [theorem]  {Lemma}
\theoremstyle{definition}

\newtheorem{remark} 	  [theorem]  {Remark}
\newtheorem{example}	  [theorem]  {Example}

\newcommand{\M}{\mathsf{matr}_\ast}
\newcommand{\Mnp}{\mathsf{matr}}
\newcommand{\po}[1][dr]{\save*!/#1+1.5pc/#1:(1,-1)@^{|-}\restore}
\newdir{ >}{{}*!/-8pt/@{>}}

\title{Matrix taxonomy and Bourn localization}
\author[a,c,d]{Michael Hoefnagel}
\author[b,c]{Pierre-Alain Jacqmin}
\affil[a]{\small{\textit{Mathematics Division, Department of Mathematical Sciences, Stellenbosch University, Private Bag X1 Matieland 7602, South Africa}}}
\affil[b]{\small{\textit{Institut de Recherche en Math\'ematique et Physique, Universit\'e catholique de Louvain, Chemin du Cyclotron~2, B 1348 Louvain-la-Neuve, Belgium}}}
\affil[c]{\small{\textit{Centre for Experimental Mathematics, Department of Mathematical Sciences, Stellenbosch University, Private Bag X1 Matieland 7602, South Africa}}}
\affil[d]{\small{\textit{National Institute for Theoretical and Computational Sciences (NITheCS), South Africa}}}
\date{}

\begin{document}

\maketitle

\begin{abstract}
In a recent paper~\cite{HoefnagelJacqminJanelidze}, an algorithm has been presented for determining implications between a particular kind of category theoretic property represented by matrices --- the so called `matrix properties'. In this paper we extend this algorithm to include matrix properties involving pointedness of a category, such as the properties of a category to be unital, strongly unital or subtractive, for example. Moreover, this extended algorithm can also be used to determine whether a given matrix property is the Bourn localization of another, thus leading to new characterizations of Mal'tsev, majority and arithmetical categories. Using a computer implementation of our algorithm, we can display all such properties given by matrices of fixed dimensions, grouped according to their Bourn localizations, as well as the implications between them.
\end{abstract}

{\small\textit{2020 Mathematics Subject Classification}: 03B35, 18E13, 18-08, 08B05, 68V20, 03C35 (primary); 68V05, 18A20, 18A35, 18B15, 08C05 (secondary).}

{\small\textit{Keywords}: matrix property, Bourn localization, fibration of points, unital category, Mal'tsev category, arithmetical category.}

\section*{Introduction}

In this paper we are concerned with \emph{closedness properties of internal relations}~\cite{ZJanelidze2006a}, which are also simply referred to as `matrix properties'. The term `matrix property' derives itself from the presentation of these properties, which for our purposes, may always be presented simply as an extended matrix whose entries are in the set $\{\ast,x_1,x_2,\dots\}$. Here the symbols $x_1,x_2,\dots$ represent variables, and $\ast$ represents a constant symbol. For instance, the property of a finitely complete category to be a Mal'tsev category~\cite{CLP,CPP} is equivalent to the requirement that (each binary internal relation in) the category satisfies the matrix property determined by the matrix 
$$
\left[\begin{array}{ccc|c} x_1 & x_2 & x_2 & x_1 \\ 
x_2 & x_2 & x_1 & x_1 
\end{array}\right].
$$
An extended matrix, such as the one above, is said to be \emph{non-pointed} if it does not contain $\ast$ as an entry. Other examples of categorical properties which are captured by a non-pointed matrix property include the property of a category to be a majority category~\cite{Hoefnagel2019} (see also~\cite{Hoefnagel2020}) as well as the property to be a (finitely complete) arithmetical category~\cite{Pedicchio1996}. For matrices containing $\ast$'s, one also needs the category to be pointed in order to state the corresponding matrix property. Therefore, in general, each extended matrix whose entries are in the set $\{\ast,x_1,x_2,\dots\}$ gives rise to a matrix property on finitely complete pointed categories. In addition to the restriction to the pointed context of the above mentioned properties, one can cite the properties to be a unital~\cite{Bourn1996}, strongly unital~\cite{Bourn1996} or subtractive category~\cite{Janelidze2005} as examples.

In the literature, it has been shown that some matrix properties follow from a conjunction of others: a finitely complete category is arithmetical if and only if it is both Mal'tsev and a majority category (see~\cite{HoefnagelJacqminJanelidze,GRT,Hoefnagel2019}). Furthermore, a finitely complete pointed category is strongly unital if and only if it is unital and subtractive~\cite{Janelidze2005}. In the non-pointed context, an explicit algorithm has been developed in~\cite{HoefnagelJacqminJanelidze} which decides whether a conjunction of non-pointed matrix properties implies another one; however, the pointed case was not considered. The first aim of this paper is to treat this case and give an algorithm for determining whether a conjunction of matrix properties on finitely complete pointed categories implies another one. In particular, our algorithm can decide when a single matrix property implies another. We have implemented this algorithm on a computer in order to get a picture of the properties induced by matrices of relatively small dimensions and the implications between them. See for instance Figure~\ref{figure 4,3,1} representing all non-equivalent matrix properties of finitely complete pointed categories given by $4\times 3$ matrices with entries in $\{\ast,x_1\}$, as well as the relationships between them.

These algorithms allow one to compute some finite parts of the posets $\mathsf{Mclex}_\ast$ and $\mathsf{Mclex}$, i.e., the posets of collections of finitely complete pointed categories (respectively finitely complete categories) defined via matrix properties (respectively non-pointed matrix properties) and ordered by inclusion. Since each non-pointed matrix property can be restricted to the pointed context, one has an order-preserving function
$$(-)_\ast\colon\mathsf{Mclex}\to\mathsf{Mclex}_\ast.$$
Using the process of `Bourn localization' (introduced in~\cite{ZJanelidze2006b} from the ideas of~\cite{Bourn1996}), we show that this function $(-)_\ast$, regarded as a functor between preorder categories, has a right adjoint 
$$\mathsf{Loc}\colon\mathsf{Mclex}_\ast\to\mathsf{Mclex}.$$
This adjunction enables us to prove that, given two non-trivial non-pointed matrices, one has the implication between the corresponding matrix properties in the finitely complete context if and only if the analogous implication holds in the finitely complete pointed context. Moreover, it is shown in~\cite{ZJanelidze2006b} how to effectively compute the Bourn localization of a (pointed) matrix property. In our displays of the finite subposets of $\mathsf{Mclex}_\ast$ computed via the computer, we can thus group together matrix properties with the same Bourn localization, see e.g. Figure~\ref{figure 3,3,2}. See also Figure~\ref{figure delocalization Maltsev 3,4,2} which represents the subposet of $\mathsf{Mclex}_\ast$ whose elements are induced by $3\times 4$ matrices with entries in $\{\ast,x_1,x_2\}$ and such that their Bourn localization is the collection of Mal'tsev categories. This figure thus gives many more characterizations of Mal'tsev categories via the fibration of points in the style of~\cite{Bourn1996}. Moreover, in this way, we also get new characterizations of finitely complete arithmetical categories as well as majority categories (see Figures~\ref{figure delocalization arithmetical 3,5,2} and~\ref{figure delocalization majority 3,5,2}).

The paper is organized as follows:
\begin{itemize}
    \item Section~\ref{section preliminaries} recalls the relevant concepts necessary for the rest of the paper.
    \item Section~\ref{section injective objects} deals with the concept of $S$-injectivity as introduced in~\cite{HoefnagelJacqminJanelidze} in the non-pointed case.
    \item Section~\ref{section trivial matrices} characterizes those matrix properties (in the pointed context) which determine the collection of categories equivalent to the terminal category~$\mathbf{1}$ and those which determine the collection of all finitely complete pointed categories.
    \item Section~\ref{section the posets and bourn localizations} studies the posets $\mathsf{Mclex}$ and $\mathsf{Mclex}_\ast$ and the adjunction between them given by restriction and Bourn localization.
    \item Section~\ref{section the algorithm} derives the main result of this paper, which is the algorithm for determining implications between matrix properties in the finitely complete pointed context.
    \item Section~\ref{section computer-assisted results} presents some of the computer-assisted results which have been obtained based on the results of the previous sections.
\end{itemize}

Lastly, we have presented the results of this paper to be mainly self-contained. However, as our results are heavily based on~\cite{HoefnagelJacqminJanelidze}, we encourage the reader to read the introduction of~\cite{HoefnagelJacqminJanelidze}, in order to see our results here in context.

\section{Preliminaries}\label{section preliminaries}

We will sometimes denote a pointed set $(S,\ast)$ simply by $S$ if there is no ambiguity on the distinguished element $\ast$ of~$S$. Given two objects $X$ and $Y$ in a category~$\mathbb{C}$, we write as usual $\mathbb{C}(X,Y)$ for the set of all morphisms $X\to Y$. When the category $\mathbb{C}$ is pointed, i.e., when it admits a zero object, we will consider this set $\mathbb{C}(X,Y)$ as a pointed set with the distinguished element being the zero morphism $X\to Y$. As it is customary, we write $\mathbb{C}^\mathsf{op}$ for the dual of a category~$\mathbb{C}$.

Let us fix throughout this paper an infinite sequence $x_1,x_2,x_3,\dots$ of pairwise distinct variables. Given integers $n > 0$ and $m,k \geqslant 0$, we write $\M(n,m,k)$ for the set of all $n\times (m+1)$ matrices whose entries are elements of the free pointed set on $k$-variables $\{\ast,x_1,\dots,x_k\}$, and we write $\M$ for the union of all such sets. We will display such an (extended) matrix $M=[x_{ij}]_{i,j}\in\M(n,m,k)$ by
$$\left[\begin{array}{ccc|c} x_{11} & \dots & x_{1m} & x_{1\,m+1} \\ \vdots & & \vdots & \vdots \\ x_{n1} & \dots & x_{nm} & x_{n\,m+1} \end{array}\right]$$
and refer to the first $m$ columns as the \emph{left columns} of~$M$. The matrix formed by the left columns of $M$ is called the \emph{left part} of~$M$, and is denoted by~$M_\mathsf{l}$. Similarly, the last column of the matrix is called the \emph{right column} of $M$ and is denoted by~$M_\mathsf{r}$. Given a sequence $((S_1,\ast_1),\dots,(S_n,\ast_n))$ of pointed sets, a \emph{row-wise interpretation} of $M$ of \emph{type} $((S_1,\ast_1),\dots,(S_n,\ast_n))$, is an $n\times (m+1)$ matrix
$$\left[\begin{array}{ccc|c} f_1(x_{11}) & \dots & f_1(x_{1m}) & f_1(x_{1\,m+1})\\ \vdots & & \vdots & \vdots \\ f_n(x_{n1}) & \dots & f_n(x_{nm}) &  f_n(x_{n\,m+1}) \end{array}\right]$$
whose entries are obtained by applying to each row of $M$ a specified pointed function $$f_i\colon \{\ast,x_1,\dots,x_k\}\to S_i$$
where `pointed' means as usual that $f_i$ is a morphism in the category $\mathbf{Set}_\ast$ of pointed sets, i.e., that $f_i(\ast)=\ast_i$. Given a pointed set $(S,\ast)$, an (ordinary) \emph{interpretation} (i.e., a `non-row-wise' interpretation) of $M$ of \emph{type} $(S,\ast)$ is a row-wise interpretation of $M$ of type $((S,\ast),\dots,(S,\ast))$ for which $f_1=\dots=f_n$.

Given $n>0$, an \emph{internal $n$-ary relation (between objects $C_1,\dots,C_n$)} in a category $\mathbb{C}$ is given by a jointly monomorphic span $(r_i\colon R \to C_i)_{1\leqslant i \leqslant n}$, i.e., a span such that, for any two parallel morphisms $x,y\colon X\to R$ with an arbitrary domain~$X$, if $r_ix=r_iy$ for all $i\in\{1,\dots,n\}$, then $x=y$. If $C_1=\cdots =C_n=C$, we say that the $n$-ary relation $(r_i\colon R \to C)_{1\leqslant i \leqslant n}$ is a relation \emph{on} the object~$C$. When $\mathbb{C}$ has finite products, an internal $n$-ary relation in $\mathbb{C}$ can also be viewed as a monomorphism $r\colon R\rightarrowtail C_1\times\dots\times C_n$, with $r_i=\pi_ir$, where $\pi_i$ denotes $i$-th product projection $\pi_i\colon C_1\times\dots\times C_n\to C_i$. Up to identification of monomorphisms into an object with `subobjects' of that object, we can say that $n$-ary relations in $\mathbf{Set}_\ast$ between the pointed sets $(C_1,\ast_1),\dots,(C_n,\ast_n)$ are (ordinary) \emph{pointed relations}, i.e., subsets $R\subseteq C_1\times\cdots\times C_n$ containing $(\ast_1,\dots,\ast_n)$.

Given an internal $n$-ary relation $r$ of domain $R$ between $C_1,\dots,C_n$ in a pointed category~$\mathbb{C}$, a matrix $M\in\M(n,m,k)$ and an object $X$ in~$\mathbb{C}$, we say that $r$ is \emph{compatible} with a row-wise interpretation 
$$\left[\begin{array}{ccc|c} g_{11} & \dots & g_{1m} & h_1\\ \vdots & & \vdots & \vdots\\ g_{n1} & \dots & g_{nm} & h_n \end{array}\right]$$
of $M$ of type $(\mathbb{C}(X,C_1),\dots,\mathbb{C}(X,C_n))$ when, if there exist morphisms $u_1,\dots,u_m\colon X\to R$ such that 
$$\left[\begin{array}{c} g_{1j} \\ \vdots \\ g_{nj} \end{array}\right]=\left[\begin{array}{c} r_1u_j \\ \vdots \\ r_nu_j \end{array}\right]$$
for each $j\in\{1,\dots,m\}$, then there exists a morphism $v\colon X\to R$ such that
$$\left[\begin{array}{c} h_{1} \\ \vdots \\ h_{n} \end{array}\right]=\left[\begin{array}{c} r_1v \\ \vdots \\ r_nv \end{array}\right].$$
Following~\cite{ZJanelidze2006a}, we say that $r$ is \emph{strictly $M$-closed over $X$} when it is compatible with any row-wise interpretation of $M$ of type $(\mathbb{C}(X,C_1),\dots,\mathbb{C}(X,C_n))$ and we say that $r$ is \emph{strictly $M$-closed} if it is strictly $M$-closed over every object~$X$. Again according to~\cite{ZJanelidze2006a}, we say that an $n$-ary relation $(r_i\colon R \to C)_{1\leqslant i \leqslant n}$ on an object $C$ is (non-strictly) \emph{$M$-closed over} an object \emph{$X$} when it is compatible with any interpretation of $M$ of type $\mathbb{C}(X,C)$ and we say that $r$ is \emph{$M$-closed} if it is $M$-closed over every object~$X$.

\begin{theorem}\cite{ZJanelidze2006a}.
Let $n>0$ and $m,k\geqslant 0$ be integers, $M\in\M(n,m,k)$ be an extended matrix and $\mathbb{C}$ be a finitely complete pointed category. The following statement are equivalent:
\begin{itemize}
\item every $n$-ary relation in $\mathbb{C}$ is strictly $M$-closed;
\item every $n$-ary relation on an object in $\mathbb{C}$ is $M$-closed.
\end{itemize}
\end{theorem}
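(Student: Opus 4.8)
The plan is to prove the two implications separately. The downward implication, from the first statement to the second, is purely formal: an ordinary interpretation of $M$ of type $\mathbb{C}(X,C)$ is by definition a row-wise interpretation of type $(\mathbb{C}(X,C),\dots,\mathbb{C}(X,C))$, and an $n$-ary relation on an object $C$ is in particular an $n$-ary relation between the objects $C,\dots,C$; so if every $n$-ary relation is strictly $M$-closed --- i.e.\ compatible with every row-wise interpretation over every $X$ --- then a fortiori every relation on an object is $M$-closed over every $X$.

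The substance is the converse, and here the idea is to reduce an arbitrary row-wise interpretation of an arbitrary $n$-ary relation to an ordinary interpretation of a suitable relation on a product object. Given an $n$-ary relation $(r_i\colon R\to C_i)_{1\leqslant i\leqslant n}$, an object $X$, and a row-wise interpretation of $M$ of type $(\mathbb{C}(X,C_1),\dots,\mathbb{C}(X,C_n))$ arising from pointed functions $f_i\colon\{\ast,x_1,\dots,x_k\}\to\mathbb{C}(X,C_i)$, together with lifts $u_1,\dots,u_m\colon X\to R$ of the left columns (so $r_iu_j=f_i(x_{ij})$), I would set $C=C_1\times\dots\times C_n$ with projections $p_i\colon C\to C_i$, let $\pi_l\colon C^n\to C$ be the product projections, and form $\phi=\langle p_1\pi_1,\dots,p_n\pi_n\rangle\colon C^n\to C$. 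Using finite completeness, take $\iota_S\colon S\rightarrowtail C^n$ to be the inverse image of the monomorphism $\langle r_1,\dots,r_n\rangle\colon R\rightarrowtail C$ along $\phi$; then $(\pi_l\iota_S\colon S\to C)_{1\leqslant l\leqslant n}$ is an $n$-ary relation on $C$. Finally I would take the single pointed function $f\colon\{\ast,x_1,\dots,x_k\}\to\mathbb{C}(X,C)$ with $f(x_l)=\langle f_1(x_l),\dots,f_n(x_l)\rangle$, so that $p_if(x_l)=f_i(x_l)$ for all $i,l$.

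The remaining work is a routine pullback chase. For each left column $j$ one shows $\phi\circ\langle f(x_{1j}),\dots,f(x_{nj})\rangle=\langle r_1u_j,\dots,r_nu_j\rangle$ factors through $R$ via $u_j$, so $\langle f(x_{1j}),\dots,f(x_{nj})\rangle\colon X\to C^n$ factors through $S$; hence the ordinary interpretation of $M$ of type $\mathbb{C}(X,C)$ determined by $f$ has its left columns lifting along the relation $S$. Invoking the hypothesis that $S$ is $M$-closed over $X$ produces $\tilde v\colon X\to S$ lifting the right column, and composing with the pullback projection $S\to R$ gives $v\colon X\to R$ with $r_iv=p_if(x_{i\,m+1})=f_i(x_{i\,m+1})$ --- exactly the lift of the right column of the original row-wise interpretation that strict $M$-closedness demands. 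I expect the only real obstacle to be getting the relation $S$ and the map $\phi$ right: $S$ must be arranged so that ``row $i$'' of any interpretation sees only the $C_i$-factor of $C$, which is precisely what turns a row-wise interpretation of $R$ into an ordinary interpretation of $S$. Once that is set up, everything else follows mechanically, including the degenerate cases $m=0$ and $k=0$, and the fact that no basepoint condition enters since an internal $n$-ary relation here is merely a jointly monomorphic span.
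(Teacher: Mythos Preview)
The paper does not supply its own proof of this theorem: it is stated with a citation to~\cite{ZJanelidze2006a} and no argument is given. So there is nothing in the paper to compare your proposal against.

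That said, your argument is correct and is essentially the standard proof. The forward direction is indeed trivial. For the converse, your construction of the relation $S\rightarrowtail C^n$ as the pullback of $\langle r_1,\dots,r_n\rangle\colon R\rightarrowtail C$ along $\phi=\langle p_1\pi_1,\dots,p_n\pi_n\rangle\colon C^n\to C$ is exactly the right device: it has the effect that membership of a tuple $(c^1,\dots,c^n)$ in $S$ depends only on the $i$-th coordinate of~$c^i$, so an ordinary interpretation of $M$ in $S$ via $f=\langle f_1,\dots,f_n\rangle$ encodes precisely the given row-wise interpretation of $M$ in~$R$. The pullback chase you outline then goes through verbatim; in particular $r_iq=p_i\pi_i\iota_S$, so $r_iv=p_i f(x_{i\,m+1})=f_i(x_{i\,m+1})$ as required.

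One small quibble with your closing remark: pointedness \emph{does} enter, namely in ensuring that your amalgamated map $f\colon\{\ast,x_1,\dots,x_k\}\to\mathbb{C}(X,C)$ is itself a pointed function. This is automatic since each $f_i$ is pointed and $\langle 0,\dots,0\rangle=0$, but it is not accurate to say that no basepoint condition is involved. The degenerate cases $m=0$ and $k=0$ are handled by your argument without modification.
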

When the equivalent conditions in the above theorem are satisfied, we say that $\mathbb{C}$ \emph{has $M$-closed relations}.

We recall that a variety of universal algebras $\mathbb{V}$ is pointed if and only if its theory contains a unique constant term, i.e., the theory of $\mathbb{V}$ contains a nullary term $\ast$ and for any two such terms $\ast$ and $\ast'$, the theorem $\ast=\ast'$ holds in the theory. An $n$-ary internal relation in a pointed variety $\mathbb{V}$ between the algebras $A_1,\dots,A_n$ is (up to identification of `subobjects' and `monomorphisms') a pointed relation $R \subseteq A_1\times\cdots\times A_n$ compatible with the operations of~$\mathbb{V}$. Given a matrix $M\in\M(n,m,k)$, such a relation $R$ is strictly $M$-closed if and only if for any row-wise interpretation
$$\left[\begin{array}{ccc|c} a_{11} & \dots & a_{1m} & a_{1\, m+1}\\ \vdots & & \vdots & \vdots\\ a_{n1} & \dots & a_{nm} & a_{n\, m+1} \end{array}\right]$$
of $M$ of type $(A_1,\dots,A_n)$, we have:
$$\left\{\left[\begin{array}{c} a_{11} \\ \vdots \\ a_{n1} \end{array}\right],\dots, \left[\begin{array}{c} a_{1m} \\ \vdots \\ a_{nm} \end{array}\right]\right\}\subseteq  R\quad\Longrightarrow\quad \left[\begin{array}{c} a_{1\, m+1} \\ \vdots \\ a_{n\, m+1} \end{array}\right]\in R.$$
We can characterize pointed varieties of universal algebras with $M$-closed relations in the following way.

\begin{theorem}\label{theorem matrix maltsev condition}\cite{ZJanelidze2006a}.
Let $n>0$ and $m,k\geqslant 0$ be integers, $M=[x_{ij}]_{i,j}\in\M(n,m,k)$ be an extended matrix and $\mathbb{V}$ be a pointed variety of universal algebras with constant term~$\ast$. Then $\mathbb{V}$ has $M$-closed relations if and only if its theory admits an $m$-ary term $p$ satisfying the equation
$$p(x_{i1},\dots,x_{im}) = x_{i\, m+1}$$
in the variables $x_1,\dots,x_k$ for any $i\in\{1,\dots,n\}$.
\end{theorem}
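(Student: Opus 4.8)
The plan is to establish the two implications separately: the ``if'' direction is a direct coordinate-wise verification, while the ``only if'' direction rests on a free-algebra construction in the style of the standard Mal'tsev-condition arguments.

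For the ``if'' direction, suppose the theory of $\mathbb V$ contains an $m$-ary term $p$ with $p(x_{i1},\dots,x_{im})=x_{i\,m+1}$ for each $i$, and let $R\subseteq A_1\times\cdots\times A_n$ be an internal $n$-ary relation, that is, a subalgebra of the product containing $(\ast^{A_1},\dots,\ast^{A_n})$. Given a row-wise interpretation $[a_{ij}]$ of $M$ of type $(A_1,\dots,A_n)$, with $a_{ij}=f_i(x_{ij})$ for pointed maps $f_i\colon\{\ast,x_1,\dots,x_k\}\to A_i$, and assuming its $m$ left columns all lie in $R$, I would apply to these columns the term operation on $A_1\times\cdots\times A_n$ induced by $p$. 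Since $R$ is a subalgebra the result lies in $R$; and since this operation is computed coordinate-wise, its $i$-th coordinate is $p^{A_i}(f_i(x_{i1}),\dots,f_i(x_{im}))$, which equals $f_i(x_{i\,m+1})=a_{i\,m+1}$ by instantiating the identity $p(x_{i1},\dots,x_{im})=x_{i\,m+1}$ under the assignment $x_\ell\mapsto f_i(x_\ell)$, noting that $\ast\mapsto\ast^{A_i}$ because $f_i$ is pointed. Hence the right column lies in $R$, so every internal relation is strictly $M$-closed and $\mathbb V$ has $M$-closed relations.

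For the converse, assume $\mathbb V$ has $M$-closed relations, let $F=F_{\mathbb V}(x_1,\dots,x_k)$ be the free algebra on the generators $x_1,\dots,x_k$, and for an entry $x_{ij}$ write $\bar x_{ij}\in F$ for the corresponding generator if $x_{ij}\in\{x_1,\dots,x_k\}$ and for $\ast^F$ if $x_{ij}=\ast$. I would take $R\subseteq F^n$ to be the subalgebra generated by the $m$ tuples $c_j=(\bar x_{1j},\dots,\bar x_{nj})$; being a subalgebra containing $(\ast^F,\dots,\ast^F)$, it is an internal $n$-ary relation on $F$, hence strictly $M$-closed by hypothesis. Feeding it the interpretation of $M$ of type $F$ induced by the pointed map $x_\ell\mapsto x_\ell$, the $j$-th left column is exactly $c_j\in R$, so the right column $(\bar x_{1\,m+1},\dots,\bar x_{n\,m+1})$ lies in $R$ as well. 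As $R$ is generated by $c_1,\dots,c_m$, this tuple equals $p^{F^n}(c_1,\dots,c_m)$ for some $m$-ary term $p$, and reading off the $i$-th coordinate yields $p^F(\bar x_{i1},\dots,\bar x_{im})=\bar x_{i\,m+1}$ in $F$. Since two terms name the same element of the free algebra $F$ exactly when their equation is a consequence of the equations of $\mathbb V$, this says precisely that $p(x_{i1},\dots,x_{im})=x_{i\,m+1}$ holds in $\mathbb V$ for every $i$, which is what we need.

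The main point requiring care is this last translation step: one must keep track of how the entries equal to $\ast$ are read off in $F$ (namely as the value $\ast^F$ of the unique constant term of the pointed theory) so that the extracted identity is literally $p(x_{i1},\dots,x_{im})=x_{i\,m+1}$ in the variables $x_1,\dots,x_k$, and one should also confirm that the generated subalgebra $R$ genuinely qualifies as an internal relation in the pointed-variety sense recalled before the statement. Everything else is routine manipulation of coordinate-wise operations on finite products.
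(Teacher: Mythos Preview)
Your argument is correct and is exactly the standard free-algebra proof one finds in the cited reference~\cite{ZJanelidze2006a}. Note, however, that the present paper does not actually supply a proof of this theorem: it is quoted from~\cite{ZJanelidze2006a} and stated without proof, so there is no in-paper argument to compare against. Your write-up would serve perfectly well as the omitted proof.
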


For a given $M\in\M(n,m,k)$, the collection of all finitely complete pointed categories with $M$-closed relations is denoted by $\mathsf{mclex}_\ast\{M\}$ and is called a \emph{matrix class} (of finitely complete pointed categories) in this paper (this notation abbreviates the term `matrix class of left exact pointed categories' in which `left exact pointed category' is an alternative name for a `finitely complete pointed category'). Note that the word `class' here does not refer to its set-theoretic meaning, since such a matrix class could not be a class in the set-theoretic sense. By a \emph{matrix property} (of a finitely complete pointed category) we mean here the property to have $M$-closed relations for a given matrix~$M$ (as in~\cite{JanelidzePhD}).

\begin{remark}
Let us warn the reader that the way we display matrices in $\M(n,m,k)$ here is slightly different from how they are displayed in~\cite{ZJanelidze2006a,ZJanelidze2006b} where $0$ is used instead of~$\ast$, or from \cite{HoefnagelJacqminJanelidze} where integers are used instead of variables and where the right columns have been omitted. Indeed, in~\cite{HoefnagelJacqminJanelidze}, the right column could be made the same for all matrices without changing the corresponding properties on finitely complete categories. They could therefore be omitted. In the present paper, the right column can contain variables and $\ast$'s and thus they cannot, a priori, be made all the same. This explains why we have to write them explicitly here. Also, we have preferred $\ast$'s instead of $0$'s not to confuse them with the $0$'s from~\cite{HoefnagelJacqminJanelidze}. Finally, the choice of variables instead of integers has been made to stay closer to the original notation as can be found in~\cite{ZJanelidze2006a}.
\end{remark}

We say that a matrix $M\in\M(n,m,k)$ is \emph{non-pointed} if it does not have $\ast$ as one of its entries, and denote the subset of $\M$ consisting of all non-pointed matrices by~$\Mnp$. Borrowing notation from~\cite{HoefnagelJacqminJanelidze}, we write $\Mnp(n,m,k)$ for the set $\M(n,m,k) \cap \Mnp$. For a non-pointed matrix~$M$, the notion of a category with $M$-closed relations extends to the (not necessarily pointed) finitely complete context in the obvious way. As in~\cite{HoefnagelJacqminJanelidze}, we denote by $\mathsf{mclex}\{M\}$ the collection of finitely complete categories with $M$-closed relations. Therefore, for a non-pointed matrix~$M$, $\mathsf{mclex}_\ast\{M\}$  is the sub-collection of $\mathsf{mclex}\{M\}$ consisting of pointed categories in $\mathsf{mclex}\{M\}$.

By a \emph{matrix set} we simply mean a subset $S$ of~$\M$. We denote by $\mathsf{mclex}_\ast S$ the collection of all finitely complete pointed categories which have $M$-closed relations for each matrix $M$ in~$S$, i.e., the intersection $\bigcap_{M\in S}\mathsf{mclex}_\ast\{M\}$. This extends the notion of a matrix set from~\cite{HoefnagelJacqminJanelidze} where only matrix sets containing non-pointed matrices were considered (and up to the change of notation as explained above). We thus say that a matrix set is \emph{non-pointed} if it is a subset of~$\Mnp$. For a non-pointed matrix set~$S$, we denote by $\mathsf{mclex} S$ the collection of all finitely complete categories which have $M$-closed relations for each matrix $M$ in~$S$, i.e., the intersection $\bigcap_{M\in S}\mathsf{mclex}\{M\}$.

\begin{example}\label{example matrices}
The following table shows on the left some examples of matrices and on the right the corresponding matrix classes. We denote by $\mathbf{1}$ the category with a single object and a single morphism. The references indicates where the corresponding exactness properties have been introduced, possibly in a different context than that of finitely complete pointed categories.

\begin{center}
\begin{tabular}{|c|c|}
\hline \rule{0pt}{3ex}\rule[-1.5ex]{0pt}{0pt} Extended matrix & Matrix class \\ \hline
\rule{0pt}{4ex}\rule[-2.7ex]{0pt}{0pt} $M=\left[\begin{array}{ccc|c} x_1 & x_2 & x_2 & x_1\\ x_2 & x_2 & x_1 & x_1 \end{array}\right]$ & (pointed) Mal'tsev categories~\cite{CLP,CPP} \\ \hline
\rule{0pt}{6ex}\rule[-4.5ex]{0pt}{0pt} $M=\left[\begin{array}{ccc|c} x_2 & x_1 & x_1 & x_1\\ x_1 & x_2 & x_1 & x_1 \\ x_1 & x_1 & x_2 & x_1 \end{array}\right]$ & (pointed) majority categories~\cite{Hoefnagel2019} \\ \hline
\rule{0pt}{6ex}\rule[-4.5ex]{0pt}{0pt} $M=\left[\begin{array}{ccc|c} x_1 & x_2 & x_2 & x_1\\ x_2 & x_2 & x_1 & x_1 \\ x_1 & x_2 & x_1 & x_1 \end{array}\right]$ & (finitely complete pointed) arithmetical categories~\cite{Pedicchio1996} \\ \hline
\rule{0pt}{4ex}\rule[-2.7ex]{0pt}{0pt} $M=\left[\begin{array}{cc|c} x_1 & \ast & x_1\\ \ast & x_1 & x_1 \end{array}\right]$ & unital categories~\cite{Bourn1996} \\ \hline
\rule{0pt}{4ex}\rule[-2.7ex]{0pt}{0pt} $M=\left[\begin{array}{ccc|c} x_1 & \ast & \ast & x_1\\ x_2 & x_2 & x_1 & x_1 \end{array}\right]$ & strongly unital categories~\cite{Bourn1996} \\ \hline
\rule{0pt}{4ex}\rule[-2.7ex]{0pt}{0pt} $M=\left[\begin{array}{cc|c} x_1 & \ast & x_1\\ x_1 & x_1 & \ast \end{array}\right]$ & subtractive categories~\cite{Janelidze2005} \\ \hline
\rule{0pt}{3ex}\rule[-1.5ex]{0pt}{0pt} $M=\left[\begin{array}{c|c} \, & x_1\end{array}\right]$ & categories equivalent to~$\mathbf{1}$ \\ \hline
\rule{0pt}{3ex}\rule[-1.5ex]{0pt}{0pt} $M=\left[\begin{array}{c|c} \, & \ast\end{array}\right]$ & all finitely complete pointed categories \\ \hline
\end{tabular}\vspace{3pt}
\end{center}
\end{example}

\begin{example}
According to Theorem~\ref{theorem matrix maltsev condition}, the matrix properties in Example~\ref{example matrices} all determine a corresponding Mal'tsev condition for pointed varieties of universal algebras. For instance, the matrix corresponding to the collection of subtractive categories determines a binary operation $s(x,y)$ which satisfies the equations $s(x,\ast) = x$ and $s(x,x) = \ast$, i.e., a \emph{subtraction}. On the other hand, many well known Mal'tsev conditions may be strengthened to a Mal'tsev condition which is determined by a matrix property. This process has been described in~\cite{HoefnagelJacqminJanelidze} (see Remark~1.8 therein) and was termed \emph{syntactical refinement}. To illustrate this process in the pointed context, we consider the Mal'tsev condition obtained in \cite{Hoefnagel2019b} corresponding to the commutativity of binary products with coequalizers. For a pointed variety $\mathbb{V}$ (where $\ast$ is the unique constant symbol) this Mal'tsev condition asserts that there exists integers $m\geqslant 0$ and $n\geqslant 1$ such that  $\mathbb{V}$ admits binary terms $b_{i}(x,y)$ and unary terms $c_{i}(x)$ for each $1 \leqslant i \leqslant m$ and $(m+2)$-ary terms $p_1,p_2,\dots,p_n$ satisfying the equations: 
\begin{align*}
&p_1(x,y,b_{1}(x,y),\dots,b_{m}(x,y)) = x, \\
&p_i(y,x,b_{1}(x,y),\dots,b_{m}(x,y)) = p_{i+1}(x,y,b_{1}(x,y),\dots,b_{m}(x,y)) \quad\text{for each }i\in\{1,\dots,n-1\},\\
&p_n(y,x,b_{1}(x,y),\dots,b_{m}(x,y)) = y,\\
&p_i(\ast,\ast,c_1(x),\dots,c_{m}(x)) = x\quad\text{for each }i\in\{1,\dots,n\}.
\end{align*}
We may strengthen this condition by assuming that $n=1$ and letting $p=p_1$, then the resulting equations reduce to
\begin{align*}
p(x,y,b_{1}(x,y),\dots,b_{m}(x,y)) &= x, \\
p(y,x,b_{1}(x,y),\dots,b_{m}(x,y)) &= y, \\ 
p(\ast,\ast,c_1(x),\dots,c_{m}(x)) &= x.
\end{align*}
This condition may be further strengthened by assuming that the terms $b_i(x,y)$ and $c_i(x)$ are actually variables so that $m=2$, $b_1(x,y) = x$, $b_2(x,y) = y$ and $c_1(x)=c_2(x)=x$. Doing this reduces the above equations to
\begin{align*}
p(x,y,x,y) &= x, \\
p(y,x,x,y) &= y, \\ 
p(\ast,\ast,x,x) &= x.
\end{align*}
These equations, in turn, determine a matrix property which is determined by the matrix
$$M=\left[\begin{array}{cccc|c}
x_1  & x_2  & x_1 & x_2 & x_1 \\ 
x_2  & x_1  & x_1 & x_2 & x_2 \\ 
\ast & \ast & x_1 & x_1 & x_1 \\ 
\end{array}\right].$$
This matrix property is then such that any pointed variety of universal algebras that satisfies it necessarily has that binary products commute with coequalizers. This procedure of syntactical refinement can then be applied to other Mal'tsev conditions which apply to pointed varieties, such as for instance \emph{anticommutativity} in the sense of~\cite{Hoefnagel2021} or \emph{normality of projections} in the sense of~\cite{ZJanelidze2003}. In the first case, the syntactical refinement of the Mal'tsev condition appearing in Theorem~3.1 of~\cite{Hoefnagel2021} is only satisfied by varieties of universal algebras equivalent to the terminal category~$\mathbf{1}$ (i.e., the corresponding matrix is trivial in the sense of Section~\ref{section trivial matrices}). In the second case, syntactical refinement of the Mal'tsev condition appearing in Theorem~3 of~\cite{ZJanelidze2003} yields the matrix
$$\left[\begin{array}{ccc|c}
x_1 & x_1  & \ast & x_1  \\ 
x_1 & \ast & x_1  & \ast \\ 
x_1 & \ast & \ast & x_1  \\ 
\end{array}\right]$$
whose corresponding Mal'tsev condition on pointed varieties of universal algebras implies the normality of projections. Using the algorithm presented in this paper, one can show that the matrix class determined by this matrix is the collection of subtractive categories, which illustrates the fact that subtractive categories have normal projections~\cite{Janelidze2005}.
\end{example}

According to Proposition~1.7 in~\cite{ZJanelidze2006b}, we know that:
\begin{itemize}
\item Given two matrices $M\in\M(n,m,k)$ and $N\in\M(n',m,k')$ such that every row of $N$ is a row of~$M$, then any finitely complete pointed category with $M$-closed relations also has $N$-closed relations, i.e., $\mathsf{mclex}_\ast\{M\} \subseteq \mathsf{mclex}_\ast\{N\}$.
\item Given two matrices $M\in\M(n,m,k)$ and $N\in\M(n,m',k')$ such that every left column of $M$ is a left column of $N$ and the right column of $M$ is the same as the right column of~$N$, then any finitely complete pointed category with $M$-closed relations also has $N$-closed relations, i.e., $\mathsf{mclex}_\ast\{M\} \subseteq \mathsf{mclex}_\ast\{N\}$.
\end{itemize}
It follows immediately from these statements that matrix properties are invariant under duplication and permutation of left columns and of rows of the matrices. It is also clear that the matrix property arising from $M\in\M(n,m,k)$ is the same as the one arising from $M$ viewed in $\M(n,m,k')$ for any $k'\geqslant k$. It is also evident that matrix properties are invariant under addition of a left column of~$\ast$'s and permutation of the variables $x_1,\dots,x_k$ in a specified row. 

The notions of $M$-closedness and strict $M$-closedness have been studied in~\cite{ZJanelidze2006a,ZJanelidze2006b}. A third type of closedness property has been introduced in~\cite{HoefnagelJacqminJanelidze}. Given a matrix $M\in\M(n,m,k)$, we say that an internal $n'$-ary relation in a pointed category is \emph{$M$-sharp} if it is strictly $M'$-closed under any matrix $M'$ obtained from any selection of $n'$ rows from~$M$. Note that in this notion, the number $n$ of rows of $M$ is not required to be exactly~$n'$; one can have $n>n'$, $n=n'$ or $n<n'$. By the above remark, a finitely complete pointed category has $M$-closed relations if and only if every $n'$-ary internal relation in it is $M$-sharp for any positive integer~$n'$. We also have a designated concept of an interpretation for the closedness property of sharpness: given pointed sets $(S_1,\ast_1),\dots,(S_{n'},\ast_{n'})$, we define a \emph{reduction} of type $((S_1,\ast_1),\dots,(S_{n'},\ast_{n'}))$ of a matrix $M\in\M(n,m,k)$ to be a row-wise interpretation of type $((S_1,\ast_1),\dots,(S_{n'},\ast_{n'}))$ of some matrix $M'\in\M(n',m,k)$ such that every row of $M'$ is a row of~$M$, and where duplicate left columns of the row-wise interpretation have been (possibly) deleted as well as left columns (possibly) permuted or duplicated. Then an internal $n'$-ary relation between the objects $C_1,\dots,C_{n'}$ in a pointed category $\mathbb{C}$ is $M$-sharp if and only if it is compatible with every reduction of $M$ of type $(\mathbb{C}(X,C_1),\dots,\mathbb{C}(X,C_{n'}))$ (for an arbitrary object~$X$).

\begin{example}
To illustrate the concept of $M$-sharpness, let us consider the matrix
$$M=\left[\begin{array}{ccc|c} x_1 & \ast & \ast & x_1\\ x_2 & x_2 & x_1 & x_1 \end{array}\right]$$
from Example~\ref{example matrices} representing strongly unital categories. A binary relation is $M$-sharp if it is strictly $M_{ij}$-closed for all $i,j\in\{1,2\}$ where
$$\begin{array}{lr}
M_{11}=\left[\begin{array}{ccc|c} x_1 & \ast & \ast & x_1 \\ x_1 & \ast & \ast & x_1 \end{array}\right] & M_{12}=M=\left[\begin{array}{ccc|c} x_1 & \ast & \ast & x_1 \\ x_2 & x_2 & x_1 & x_1 \end{array}\right]\hspace{5pt}\\
M_{21}=\left[\begin{array}{ccc|c} x_2 & x_2 & x_1 & x_1 \\ x_1 & \ast & \ast & x_1 \end{array}\right] & M_{22}=\left[\begin{array}{ccc|c} x_2 & x_2 & x_1 & x_1 \\ x_2 & x_2 & x_1 & x_1 \end{array}\right].
\end{array}$$
Since the right column of $M_{11}$ can be found in its left columns (and analogously for~$M_{22}$), any binary relation is strictly $M_{11}$-closed and strictly $M_{22}$-closed. Hence, a binary relation is $M$-sharp if and only if it is strictly $M$-closed and strictly $M_{21}$-closed. In the category $\mathbf{Set}_\ast$, let us consider the pointed binary relation
$$R=\left\{\left[\begin{array}{c}\ast \\ \ast\end{array}\right],\left[\begin{array}{c}a \\ \ast\end{array}\right],\left[\begin{array}{c}a \\ a\end{array}\right]\right\}\subset A^2$$
on the two-element pointed set $A=\{\ast,a\}$. This relation is strictly $M$-closed since, given two pointed functions $f_1,f_2\colon\{\ast,x_1,x_2\}\to\{\ast,a\}$ such that
$$\left[\begin{array}{c}f_1(x_1) \\ f_2(x_2)\end{array}\right],\left[\begin{array}{c}f_1(\ast) \\ f_2(x_2)\end{array}\right],\left[\begin{array}{c}f_1(\ast) \\ f_2(x_1)\end{array}\right] \in R,$$
one automatically has $f_2(x_1)=f_2(x_2)=\ast$ and therefore
$$\left[\begin{array}{c}f_1(x_1) \\ f_2(x_1)\end{array}\right] \in R.$$
Moreover, since $R$ is reflexive, we can see that it is (non-strictly) $M_{21}$-closed. However, $R$ is not strictly $M_{21}$-closed since
$$\left[\begin{array}{ccc|c} a & a & \ast & \ast \\ a & \ast & \ast & a \end{array}\right]$$
is a row-wise interpretation of $M_{21}$ of type $(A,A)$ whose left columns are in $R$ but not its right column. Therefore, R is not $M$-sharp which show that, in general, when the arity $n'$ of the relation $R$ on an object is the same as the number $n$ of rows of the matrix $M\in\M$, the condition `$R$ is $M$-sharp' is strictly stronger than the condition `$R$ is strictly $M$-closed', which is itself strictly stronger than the condition `$R$ is $M$-closed'. Note that when $n\neq n'$, the comparison between $M$-sharpness and (strict) $M$-closedness does not make sense.
\end{example}

\section{$S$-injective objects}\label{section injective objects}

Given a matrix set~$S$ and extending the notions from~\cite{HoefnagelJacqminJanelidze}, we call an object $X$ in a pointed category $\mathbb{C}$ an \emph{$S$-injective object} if, for every matrix $M\in S$, every internal $n$-ary relation on any object in $\mathbb{C}^\mathsf{op}$ is $M$-closed over~$X$, where $n$ is the number of rows of~$M$. If $\mathbb{C}$ is finitely cocomplete (i.e., if $\mathbb{C}^\mathsf{op}$ is finitely complete), this is equivalent to require that, for every matrix $M\in S$, every internal $n$-ary relation in $\mathbb{C}^\mathsf{op}$ is strictly $M$-closed over~$X$, where $n$ is the number of rows of~$M$. It is not difficult to see that $S$-injective objects are closed under all limits that exist in the pointed category. We write $\mathsf{Inj}_S\mathbb{C}$ for the full subcategory of $\mathbb{C}$ consisting of all $S$-injective objects. We say that $\mathbb{C}$ \emph{has enough $S$-injective objects} if for any object $C$ in $\mathbb{C}$ there is a monomorphism $C\rightarrowtail D$ for which $D$ is $S$-injective.

Given a matrix $M=[x_{ij}]_{i,j}\in\M(n,m,k)$, a pointed category $\mathbb{C}$ with finite products and finite coproducts, and an object $X$ in~$\mathbb{C}$, we denote by $\pi^X_{M_\mathsf{l}}$ the canonical map from the coproduct $n X^k$ to the product $X^m$ given by the matrix
$$\pi^X_{M_\mathsf{l}}=\left[\begin{array}{ccc} \pi_{x_{11}} & \dots & \pi_{x_{1m}} \\ \vdots & & \vdots \\ \pi_{x_{n1}} & \dots & \pi_{x_{nm}} \end{array}\right]$$
where $\pi_\ast$ stands for the zero morphism $X^k\to X$ and $\pi_{x_1},\dots,\pi_{x_k}$ stand for the product projections $X^k\to X$. Similarly, we denote by $\pi^X_{M_\mathsf{r}}$ the canonical map from the coproduct $n X^k$ to $X$ given by the column
$$\pi^X_{M_\mathsf{r}}=\left[\begin{array}{c} \pi_{x_{1\,m+1}} \\ \vdots \\ \pi_{x_{n\,m+1}} \end{array}\right].$$

We say that the category $\mathbb{C}$ has \emph{universal epi-factorizations} if every morphism $f$ admits a factorization $f=gh$ via an epimorphism $h$ such that any similar factorization $f = g'h'$ with $h'$ an epimorphism yields $h = uh'$ for a (necessarily unique) morphism~$u$. Such a factorization is of course unique up to isomorphism and we call it the \emph{universal epi-factorization} of~$f$. The following theorem is the `pointed version' of Theorem~2.2 of~\cite{HoefnagelJacqminJanelidze}.

\begin{theorem}\label{theorem description injective objects}
Let $n>0$ and $m,k\geqslant 0$ be integers, $M\in\M(n,m,k)$ be a matrix, $\mathbb{C}$ be a pointed category with finite products, finite colimits and universal epi-factorizations, and $X$ be an object of~$\mathbb{C}$. Considering the universal epi-factorization $\pi^X_{M_\mathsf{l}}=i^X_{M_\mathsf{l}} r^X_{M_\mathsf{l}}$ of~$\pi^X_{M_\mathsf{l}}$, the following statements are equivalent:
\begin{enumerate}[label=(\arabic*)]
\item\label{X is M-injective} $X$ is $\{M\}$-injective.
\item\label{relations strictly M-closed over X} Every internal $n$-ary relation in $\mathbb{C}^\mathsf{op}$ is strictly $M$-closed over~$X$.
\item\label{description injective objects} There exists a morphism $p^X_M\colon R^X_{M_\mathsf{l}} \to X$ making the diagram
$$\xymatrix{& X^m &\\ & & n X^k\ar@{->>}[ld]_-{r^X_{M_\mathsf{l}}}\ar[rd]^-{\pi^X_{M_\mathsf{r}}}\ar[lu]_-{\pi^X_{M_\mathsf{l}}} & \\ & R^X_{M_\mathsf{l}}\ar[uu]^-{i^X_{M_\mathsf{l}}} \ar@{-->}[rr]_-{p^X_M} & & X}$$
commute.
\end{enumerate}
\end{theorem}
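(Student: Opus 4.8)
The plan is to prove the equivalence by establishing $\ref{X is M-injective} \Leftrightarrow \ref{relations strictly M-closed over X}$ first, which is essentially a matter of unwinding definitions, and then proving $\ref{relations strictly M-closed over X} \Leftrightarrow \ref{description injective objects}$, which carries the real content. For the first equivalence, I would note that since $\mathbb{C}$ has finite colimits, $\mathbb{C}^\mathsf{op}$ is finitely complete, and the statement in Section~\ref{section injective objects} already records that for an object $X$ in a finitely cocomplete pointed category, being $\{M\}$-injective is equivalent to every internal $n$-ary relation in $\mathbb{C}^\mathsf{op}$ being strictly $M$-closed over $X$. So $\ref{X is M-injective} \Leftrightarrow \ref{relations strictly M-closed over X}$ is immediate from the definitions recalled earlier.

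For $\ref{relations strictly M-closed over X} \Rightarrow \ref{description injective objects}$, the idea is to apply the strict $M$-closedness condition to a \emph{universal} or \emph{generic} relation built out of $X$ itself, dualized. Concretely, in $\mathbb{C}^\mathsf{op}$ one should look at the morphism $\pi^X_{M_\mathsf{l}}\colon nX^k \to X^m$, whose universal epi-factorization in $\mathbb{C}$ is $\pi^X_{M_\mathsf{l}} = i^X_{M_\mathsf{l}} r^X_{M_\mathsf{l}}$; dually this produces (in $\mathbb{C}^\mathsf{op}$) a jointly monomorphic span, i.e.\ an internal $n$-ary relation $R$ on $X$ in $\mathbb{C}^\mathsf{op}$ (here the $n$ legs correspond to the $n$ rows, each leg being the composite of $r^X_{M_\mathsf{l}}$-dual with the appropriate coproduct coprojection into $nX^k$). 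The universal epi-factorization is exactly what guarantees joint monomorphicity of the dual span, so $R$ is a genuine internal relation. The left columns of $M$, read through the product projections, give morphisms $X \to R$ (dually) exhibiting the $m$ left columns of the tautological interpretation as lying in $R$; strict $M$-closedness over $X$ then produces the morphism $v$ filling in the right column, which upon dualizing is precisely the desired $p^X_M\colon R^X_{M_\mathsf{l}} \to X$ making the triangle with $\pi^X_{M_\mathsf{r}}$ commute. One must check the compatibility condition $p^X_M \circ r^X_{M_\mathsf{l}} = \pi^X_{M_\mathsf{r}}$, which is exactly commutativity of the lower triangle in the displayed diagram.

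For the converse $\ref{description injective objects} \Rightarrow \ref{relations strictly M-closed over X}$, I would take an arbitrary internal $n$-ary relation $(s_i\colon S \to D_i)$ in $\mathbb{C}^\mathsf{op}$ together with a row-wise interpretation of $M$ of type $(\mathbb{C}^\mathsf{op}(X, D_1), \dots, \mathbb{C}^\mathsf{op}(X, D_n))$ whose left columns factor through the relation, i.e.\ witnessed by morphisms $u_1, \dots, u_m\colon X \to S$ in $\mathbb{C}^\mathsf{op}$. Dualizing and assembling these data, the left columns assemble into a morphism out of (the dual of) $X^m$ compatible with $\pi^X_{M_\mathsf{l}}$, hence — by the universal property of the epi-factorization — factoring through $R^X_{M_\mathsf{l}}$; composing with $p^X_M$ and using the commuting triangle gives the morphism $v\colon X \to S$ (in $\mathbb{C}^\mathsf{op}$) realizing the right column. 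The bookkeeping here is to match the $n$ rows of $M$ against the $n$ coprojections $X^k \to nX^k$ and to see that $\pi^X_{M_\mathsf{r}}$ composed with the relevant coprojection recovers the $i$-th entry of the right column evaluated under $f_i$.

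The main obstacle, as usual in these coproduct/product ``generic element'' arguments, is the dualization bookkeeping: keeping straight which universal property lives in $\mathbb{C}$ and which in $\mathbb{C}^\mathsf{op}$, and verifying that the universal epi-factorization in $\mathbb{C}$ corresponds on the nose to the joint-monomorphicity needed for an internal relation in $\mathbb{C}^\mathsf{op}$ — equivalently, that the canonical map $nX^k \to X^m$ in $\mathbb{C}$ plays the role of the ``largest relation satisfying the left-column constraints'' when viewed in $\mathbb{C}^\mathsf{op}$. Once that dictionary is set up, both implications are diagram chases. I expect the proof to mirror that of Theorem~2.2 of~\cite{HoefnagelJacqminJanelidze} almost verbatim, the only new point being that all products, coproducts and the zero morphisms $\pi_\ast$ are interpreted in the pointed category $\mathbb{C}$, so that $\ast$-entries of $M$ contribute zero morphisms rather than being disallowed.
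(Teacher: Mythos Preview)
Your overall structure matches the paper's: the equivalence \ref{X is M-injective}$\Leftrightarrow$\ref{relations strictly M-closed over X} is immediate from finite cocompleteness, and the implication \ref{relations strictly M-closed over X}$\Rightarrow$\ref{description injective objects} (or rather \ref{X is M-injective}$\Rightarrow$\ref{description injective objects} in the paper) is obtained exactly as you describe, by applying closedness to the ``tautological'' relation $r^X_{M_\mathsf{l}}$ viewed in $\mathbb{C}^\mathsf{op}$ (a minor slip: it is a relation on $X^k$, not on $X$).

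There is, however, a genuine gap in your sketch of \ref{description injective objects}$\Rightarrow$\ref{relations strictly M-closed over X}. You write that the assembled left-column data give a morphism $u\colon S\to X^m$ ``compatible with $\pi^X_{M_\mathsf{l}}$'' and that this therefore factors through $R^X_{M_\mathsf{l}}$ ``by the universal property of the epi-factorization''. But the universal property of $\pi^X_{M_\mathsf{l}}=i^X_{M_\mathsf{l}} r^X_{M_\mathsf{l}}$ only compares $r^X_{M_\mathsf{l}}$ with other epimorphisms \emph{out of $nX^k$} through which $\pi^X_{M_\mathsf{l}}$ itself factors. From the equation $u\circ r=\pi^X_{M_\mathsf{l}}\circ g$ (with $r$ the epi in $\mathbb{C}$ dual to the given relation and $g$ built from the row-wise interpretation) you do not directly obtain such a factorization of $\pi^X_{M_\mathsf{l}}$, and in particular there is no reason $r^X_{M_\mathsf{l}}\circ g$ should descend along $r$ (neither is $i^X_{M_\mathsf{l}}$ assumed monic, nor is $r$ assumed regular epi).

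The missing step --- and this is what the paper actually does --- is to form the \emph{pushout} of $r$ along $g$ in $\mathbb{C}$, producing an epimorphism $r'\colon nX^k\twoheadrightarrow R'$ through which $\pi^X_{M_\mathsf{l}}$ now does factor. Only then can the universal property be invoked to obtain $u'\colon R'\to R^X_{M_\mathsf{l}}$ with $u'r'=r^X_{M_\mathsf{l}}$; composing $p^X_M u'$ with the pushout comparison $S\to R'$ yields the desired $v\colon S\to X$. So your identification of ``dualization bookkeeping'' as the only obstacle understates matters: the pushout is the substantive idea needed to make the universal property applicable here.
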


\begin{proof}
As mentioned above, the equivalence \ref{X is M-injective}$\Leftrightarrow$\ref{relations strictly M-closed over X} holds because $\mathbb{C}$ is finitely cocomplete. To prove the implication \ref{X is M-injective}$\Rightarrow$\ref{description injective objects}, we consider the epimorphism $r^X_{M_\mathsf{l}}\colon n X^k \twoheadrightarrow R^X_{M_\mathsf{l}}$ as an $n$-ary relation in~$\mathbb{C}^\mathsf{op}$. We also consider the interpretation of $M$ of type $\mathbb{C}^\mathsf{op}(X,X^k)=\mathbb{C}(X^k,X)$ given by the pointed function $f\colon\{\ast,x_1,\dots,x_k\}\to\mathbb{C}(X^k,X)$ defined by $f(x_i)=\pi_{x_i}$, the $i$-th projection $X^k\to X$, for each $i\in\{1,\dots,k\}$. Since $\pi^X_{M_\mathsf{l}}$ factorizes through $r^X_{M_\mathsf{l}}$ and since this relation is $M$-closed over $X$ in~$\mathbb{C}^\mathsf{op}$, we know that $\pi^X_{M_\mathsf{r}}$ must also factorize through $r^X_{M_\mathsf{l}}$.

It remains to prove the implication \ref{description injective objects}$\Rightarrow$\ref{X is M-injective}. Let us consider an $n$-ary internal relation on an object $C$ in~$\mathbb{C}^\mathsf{op}$, viewed as an epimorphism $r\colon nC\twoheadrightarrow R$ in~$\mathbb{C}$. Let us also consider an interpretation of $M$ of type $\mathbb{C}^\mathsf{op}(X,C)=\mathbb{C}(C,X)$ given by a pointed function $f\colon\{\ast,x_1,\dots,x_k\}\to\mathbb{C}(C,X)$. This function $f$ induces a morphism $g\colon C\to X^k$ such that $\pi_{x_i}g=f(x_i)$ for each $i\in\{1,\dots,k\}$. We denote by $ng\colon nC\to n X^k$ the $n$-fold coproduct of the morphism~$g$. Given a factorisation of $\pi^X_{M_\mathsf{l}}\circ ng$ through $r$ as in
$$\xymatrix{X^m & &\\ & & n X^k\ar[rd]^-{\pi^X_{M_\mathsf{r}}}\ar[llu]_-{\pi^X_{M_\mathsf{l}}} & \\ & nC\ar@{->>}[dl]_{r}\ar[ur]_-{ng} & & X \\ R\ar@{-->}[urrr]\ar[uuu] & }$$
we must show that $\pi^X_{M_\mathsf{r}}\circ ng$ also factors through~$r$. Considering the pushout of $r$ along~$ng$,
$$\xymatrix{nC \ar@{->>}[d]_-{r} \ar[r]^-{ng} & n X^k \ar@{->>}[d]^-{r'} \\ R \ar[r] & R' \po}$$
it suffices to show that $\pi^X_{M_\mathsf{r}}$ factors through $r'$ under the assumption that $\pi^X_{M_\mathsf{l}}$ does.
$$\xymatrix{& X^m &\\ & & n X^k\ar@{->>}[ld]_-{r'}\ar[rd]^-{\pi^X_{M_\mathsf{r}}}\ar[lu]_-{\pi^X_{M_\mathsf{l}}} & \\ & R'\ar[uu]\ar@{-->}[rr] & & X}$$
Since $\pi^X_{M_\mathsf{l}}=i^X_{M_\mathsf{l}} r^X_{M_\mathsf{l}}$ is a universal epi-factorization, there exists a morphism $u\colon R'\to R^X_{M_\mathsf{l}}$ such that $ur' = r^X_{M_\mathsf{l}}$. The required morphism $R'\to X$ is then given by the composite $p^X_M u$.
\end{proof}

We will later need the following result, which is the direct adaptation to the pointed context of Theorem~3.1 in~\cite{HoefnagelJacqminJanelidze}, itself coming from the ideas of~\cite{Weighill2017}. The proof being analogous to the one in~\cite{HoefnagelJacqminJanelidze}, we omit it here.

\begin{theorem}\label{ThmB}
Let $\mathbb{C}$ be a pointed category having finite limits and finite colimits and where every morphism factorizes as an epimorphism followed by an equalizer. Consider two matrix sets $S\subseteq T$. If $\mathbb{C}$ has enough $T$-injective objects, then $\mathsf{Inj}_S\mathbb{C}$ is the largest full subcategory of $\mathbb{C}$ among those that contain all $T$-injective objects, are closed under finite limits, and whose dual categories have $M$-closed relations for every $M\in S$.
\end{theorem}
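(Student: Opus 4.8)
The plan is to adapt the proof of Theorem~3.1 in~\cite{HoefnagelJacqminJanelidze} to the pointed context, with Theorem~\ref{theorem description injective objects} playing the role of Theorem~2.2 of~\cite{HoefnagelJacqminJanelidze}. A preliminary observation is that in $\mathbb{C}$ an ``epimorphism followed by an equalizer'' factorization is automatically a universal epi-factorization: if $f=gh=g'h'$ with $h,h'$ epimorphisms and $g$ the equalizer of a pair $a,b$, then $ag'h'=af=bf=bg'h'$ forces $ag'=bg'$, so $g'$ factors through $g$ and, $g$ being monic, this produces the required comparison. Hence Theorem~\ref{theorem description injective objects} is available in $\mathbb{C}$ and, since limits in a full subcategory closed under finite limits are computed as in $\mathbb{C}$, also in every such subcategory. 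The statement then splits into showing that $\mathsf{Inj}_S\mathbb{C}$ has the three listed properties and that it contains every full subcategory of $\mathbb{C}$ which does.

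For the first part, property~(a) holds because $S\subseteq T$ makes every $T$-injective object $S$-injective, and property~(b) is the limit-closure of $S$-injective objects recorded before Theorem~\ref{theorem description injective objects}. The content is property~(c): fixing $M\in S$ with $n$ rows, one must show $(\mathsf{Inj}_S\mathbb{C})^\mathsf{op}$ has $M$-closed relations. As $\mathsf{Inj}_S\mathbb{C}$ is full in $\mathbb{C}$, for a relation whose objects are all $S$-injective, being (strictly) $M$-closed over an $S$-injective object means the same thing in $(\mathsf{Inj}_S\mathbb{C})^\mathsf{op}$ as in $\mathbb{C}^\mathsf{op}$; it therefore suffices to take a jointly epic cospan $(c_i\colon C_i\to R)$ in $\mathsf{Inj}_S\mathbb{C}$ and check that the induced relation in $\mathbb{C}^\mathsf{op}$ is strictly $M$-closed over an arbitrary $S$-injective $X$. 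Factor the copairing $[c_i]\colon\coprod_i C_i\to R$ in $\mathbb{C}$ as an epimorphism onto some $Q$ followed by an equalizer $m\colon Q\to R$; then $(C_i\to Q)$ is jointly epic \emph{in $\mathbb{C}$}. Writing $m$ as the equalizer of a pair $a,b\colon R\to W$ and composing with a monomorphism $\mu\colon W\to E$ into a $T$-injective object $E$ (here is where we use that $\mathbb{C}$ has enough $T$-injectives), we get $m=\mathrm{eq}_\mathbb{C}(\mu a,\mu b)$ with $R,E$ both $S$-injective; since $S$-injective objects are limit-closed, $Q$ is $S$-injective and $m$ is a regular monomorphism of $\mathsf{Inj}_S\mathbb{C}$, while the jointly epic family $(c_i)$ in its image makes it an epimorphism of $\mathsf{Inj}_S\mathbb{C}$. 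A morphism which is simultaneously a regular monomorphism and an epimorphism is invertible, so $m$ is an isomorphism of $\mathsf{Inj}_S\mathbb{C}$, reducing the claim to a jointly epic cospan in $\mathbb{C}$; and the $\{M\}$-injectivity of $X$ in $\mathbb{C}$ — which holds since $X$ is $S$-injective and $M\in S$, and which, $\mathbb{C}$ being finitely cocomplete, amounts to strict $M$-closedness over $X$ of every internal relation in $\mathbb{C}^\mathsf{op}$ — supplies exactly what is needed.

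For the second part, let $\mathbb{D}\subseteq\mathbb{C}$ be full, closed under finite limits, contain all $T$-injective objects, and be such that $\mathbb{D}^\mathsf{op}$ has $M$-closed relations for every $M\in S$. Fix $D\in\mathbb{D}$ and $M\in S$ with $n$ rows; we verify condition~\ref{description injective objects} of Theorem~\ref{theorem description injective objects}, i.e.\ we produce $p^D_M\colon R^D_{M_\mathsf{l}}\to D$ with $p^D_M r^D_{M_\mathsf{l}}=\pi^D_{M_\mathsf{r}}$. First, $R^D_{M_\mathsf{l}}\in\mathbb{D}$: the equalizer $i^D_{M_\mathsf{l}}\colon R^D_{M_\mathsf{l}}\to D^m$ is the equalizer of a pair $D^m\to W$, and composing with a monomorphism $W\to E$ into a $T$-injective object $E\in\mathbb{D}$ exhibits $R^D_{M_\mathsf{l}}$ as an equalizer of a pair $D^m\to E$ with $D^m,E\in\mathbb{D}$, hence — $\mathbb{D}$ being closed under finite limits — as an object of $\mathbb{D}$. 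Thus the cospan $(r^D_{M_\mathsf{l}}\iota_i\colon D^k\to R^D_{M_\mathsf{l}})_i$, with $\iota_i$ the coproduct injections, is an $n$-ary relation on $D^k$ in $\mathbb{D}^\mathsf{op}$, being jointly epic in $\mathbb{C}$ because $r^D_{M_\mathsf{l}}$ is epic. Now apply $M$-closedness of this relation over the object $D$, for the interpretation of $M$ of type $\mathbb{D}^\mathsf{op}(D,D^k)=\mathbb{C}(D^k,D)$ sending $x_i$ to the projection $D^k\to D$ onto the $i$-th factor: the compatibility hypothesis is witnessed by the morphisms $\mathrm{pr}_j i^D_{M_\mathsf{l}}\colon R^D_{M_\mathsf{l}}\to D$, where $\mathrm{pr}_j\colon D^m\to D$ is the $j$-th product projection, because $\mathrm{pr}_j i^D_{M_\mathsf{l}} r^D_{M_\mathsf{l}}=\mathrm{pr}_j\pi^D_{M_\mathsf{l}}$ recovers the $j$-th left column of the interpretation; compatibility then yields $v\colon R^D_{M_\mathsf{l}}\to D$ with $v\,r^D_{M_\mathsf{l}}=\pi^D_{M_\mathsf{r}}$, so $p^D_M:=v$ works. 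Letting $M$ range over $S$ shows every object of $\mathbb{D}$ is $S$-injective, i.e.\ $\mathbb{D}\subseteq\mathsf{Inj}_S\mathbb{C}$.

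I expect the main obstacle to be precisely the tension between these subcategories being closed only under finite \emph{limits} and the objects $nX^k$ and $R^X_{M_\mathsf{l}}$ being built from \emph{colimits}. It is resolved by the two devices above: the hypothesis that $\mathbb{C}$ has enough $T$-injective objects, allowing an auxiliary codomain to be replaced by a $T$-injective one lying inside the subcategory, and the ``equalizer'' half of the factorization system, which both keeps this replacement exact and makes the comparison monomorphism $m$ \emph{regular}, so that being epic forces it to be invertible. A secondary point requiring care is the bookkeeping of op-variance, so that the closedness hypotheses on the dual categories translate, via Theorem~\ref{theorem description injective objects}, into the factorizations of $\pi^X_{M_\mathsf{l}}$ and $\pi^X_{M_\mathsf{r}}$ in $\mathbb{C}$.
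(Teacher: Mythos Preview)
Your proposal is correct and is exactly the adaptation of Theorem~3.1 of \cite{HoefnagelJacqminJanelidze} that the paper has in mind (the paper omits the proof for this reason). One small quibble: the sentence ``Theorem~\ref{theorem description injective objects} is available in $\mathbb{C}$ and, since limits in a full subcategory closed under finite limits are computed as in $\mathbb{C}$, also in every such subcategory'' is not literally true --- that theorem requires finite \emph{colimits} and universal epi-factorizations, which closure under finite limits does not guarantee --- but this is harmless, since your argument never actually invokes Theorem~\ref{theorem description injective objects} in a proper subcategory: in the maximality part you verify condition~\ref{description injective objects} for $D$ \emph{in $\mathbb{C}$}, using that $R^D_{M_\mathsf{l}}$ happens to lie in $\mathbb{D}$ so that $M$-closedness in $\mathbb{D}^\mathsf{op}$ can be applied.
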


\section{Trivial and anti-trivial matrices}\label{section trivial matrices}

A matrix $M\in\M$ is said to be \emph{trivial} if the matrix class $\mathsf{mclex}_\ast\{M\}$ consists exactly of those categories which are equivalent to~$\mathbf{1}$, the single morphism category. Since those categories belong to $\mathsf{mclex}_\ast\{N\}$ for all $N\in\M$, one has $\mathsf{mclex}_\ast\{M\} \subseteq \mathsf{mclex}_\ast\{N\}$ for any trivial matrix $M$ and any matrix~$N$. This section is devoted to the proof of Theorem~\ref{ThmC} which characterizes trivial matrices. In view of that theorem and of Theorem~2.3 in~\cite{HoefnagelJacqminJanelidze}, we know that a non-pointed matrix $M\in\Mnp\subset\M$ is trivial in our sense if and only if it is trivial in the sense of~\cite{HoefnagelJacqminJanelidze}, i.e., if and only if any finitely complete category with $M$-closed relations is a preorder (see Corollary~\ref{corollary trivial matrices}).

Given a matrix $M=[x_{ij}]_{i,j}\in\M(n,m,k)$ and an integer $i\in\{1,\dots,n\}$, we denote by $R_{M_i}$ the kernel relation on the set $\{1,\dots,m\}$ induced by the left part of the $i$-th row of $M$ seen as a map $\{1,\dots,m\}\to\{\ast,x_1,\dots,x_k\}$, i.e.,
$$j\, R_{M_i} \,j' \,\,\Leftrightarrow \,\, x_{ij}=x_{ij'}.$$
Given $i,i'\in\{1,\dots,n\}$, we denote by $R^\ast_{M_{i,i'}}$ the equivalence relation on the set $\{1,\dots,m\}$ defined by
$$j\, R^\ast_{M_{i,i'}} \,j' \,\,\Leftrightarrow \,\, (j=j') \text{ or } ((x_{i,j}=\ast \text{ or } x_{i',j}=\ast) \text{ and } (x_{i,j'}=\ast \text{ or } x_{i',j'}=\ast))$$
We also consider the equivalence relation $R_{M_i} \vee R_{M_{i'}} \vee R^\ast_{M_{i,i'}}$ given by the join of the three equivalence relations $R_{M_i}$, $R_{M_{i'}}$ and $R^\ast_{M_{i,i'}}$. Finally, given a pointed set~$(Y,\ast)$, we say that $M$ is \emph{functional in $(Y,\ast)$} if, given $i,i'\in\{1,\dots,n\}$ and two pointed functions
$$f_i,f_{i'}\colon\{\ast,x_1,\dots,x_k\}\to Y$$
such that $f_i(x_{ij})=f_{i'}(x_{i'j})$ for all $j\in\{1,\dots,m\}$, one has $f_i(x_{i\,m+1})= f_{i'}(x_{i'\,m+1})$. Using Theorem~\ref{theorem description injective objects} with $\mathbb{C}=\mathbf{Set}_\ast$ and $X=(Y,\ast)$, since $R^X_{M_\mathsf{l}}$ is in that case the image of $\pi^{(Y,\ast)}_{M_\mathsf{l}}$, it is not difficult to see that $M$ is functional in $(Y,\ast)$ if and only if $(Y,\ast)$ is an $\{M\}$-injective object in $\mathbf{Set}_\ast$.

\begin{theorem}\label{ThmC}
Given integers $n>0$ and $m,k\geqslant 0$ and a matrix $M=[x_{ij}]_{i,j}\in\M(n,m,k)$, the following conditions are equivalent:
\begin{enumerate}[label=(\arabic*)]
\item\label{ThmC M not trivial} $M$ is not a trivial matrix.

\item\label{ThmC Set op M-closed relations} $\mathbf{Set}_\ast^\mathsf{op}$ has $M$-closed relations.

\item\label{ThmC description} The following three conditions are all satisfied:
\begin{itemize}
\item every row of $M$ whose right entry is $x_{i\,m+1}\neq\ast$ contains $x_{i\,m+1}$ as one of its left entries,
\item given $i,i'\in\{1,\dots,n\}$ and $j,j'\in\{1,\dots,m\}$ such that $i\neq i'$ and $$x_{ij}=x_{i\,m+1}\neq\ast\neq x_{i'j'}=x_{i'\,m+1},$$
then $j\,R_{M_i} \vee R_{M_{i'}} \vee R^\ast_{M_{i,i'}} \,j'$,
\item given $i,i'\in\{1,\dots,n\}$ and $j\in\{1,\dots,m\}$ such that $x_{ij}=x_{i\,m+1}\neq\ast=x_{i'\,m+1}$, there exists $j'\in\{1,\dots,m\}$ such that $j\,R_{M_i} \vee R_{M_{i'}} \vee R^\ast_{M_{i,i'}} \,j'$ and either $x_{ij'}=\ast$ or $x_{i'j'}=\ast$.
\end{itemize}

\item\label{ThmC reductions} $M$ does not have a reduction of type $(\{\ast,x_1\})$ or $(\{\ast,x_1\},\{\ast,x_1\})$ given by any of the following four matrices:
$$\left[\begin{array}{c|c} \, & x_1\end{array}\right],\quad \left[\begin{array}{c|c} \ast & x_1\end{array}\right],\quad \left[\begin{array}{c|c} x_1 & x_1\\ x_1 & \ast\end{array}\right],\quad \left[\begin{array}{cc|c} x_1 & \ast & x_1\\ x_1 & \ast & \ast\end{array}\right].$$

\item\label{ThmC functionality all} $M$ is functional in every pointed set.

\item\label{ThmC functionality 2 elements} $M$ is functional in a two element pointed set.

\item\label{ThmC functionality at least 2 elements} $M$ is functional in a pointed set having at least two elements.

\end{enumerate} 
\end{theorem}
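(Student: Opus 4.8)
The plan is to prove a cycle of implications organized into three blocks. The first is an ``injectivity bridge'' $\ref{ThmC Set op M-closed relations}\Leftrightarrow\ref{ThmC functionality all}\Leftrightarrow\ref{ThmC functionality 2 elements}\Leftrightarrow\ref{ThmC functionality at least 2 elements}$, which translates the semantic condition on $\mathbf{Set}_\ast^{\mathsf{op}}$ into functionality. The second is a purely combinatorial core, $\ref{ThmC description}\Leftrightarrow\ref{ThmC functionality 2 elements}$ together with $\ref{ThmC reductions}\Leftrightarrow\ref{ThmC functionality 2 elements}$. The third closes the loop with the easy implication $\ref{ThmC Set op M-closed relations}\Rightarrow\ref{ThmC M not trivial}$ and the ``trivializing reductions'' implication $\ref{ThmC M not trivial}\Rightarrow\ref{ThmC reductions}$.

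For the injectivity bridge, to prove $\ref{ThmC Set op M-closed relations}\Leftrightarrow\ref{ThmC functionality all}$ I would unfold ``$\mathbf{Set}_\ast^{\mathsf{op}}$ has $M$-closed relations''. Since $\mathbf{Set}_\ast^{\mathsf{op}}$ is a finitely complete pointed category, the first theorem of Section~\ref{section preliminaries} replaces this by ``every $n$-ary relation in $\mathbf{Set}_\ast^{\mathsf{op}}$ is strictly $M$-closed'', i.e.\ strictly $M$-closed over every object $X$; the equivalence $\ref{X is M-injective}\Leftrightarrow\ref{relations strictly M-closed over X}$ of Theorem~\ref{theorem description injective objects}, applied with $\mathbb{C}=\mathbf{Set}_\ast$ (which has finite products, finite colimits and universal epi-factorizations --- namely, the image factorizations), then rewrites this as ``every object of $\mathbf{Set}_\ast$ is $\{M\}$-injective'', which by the remark preceding the statement is exactly condition~\ref{ThmC functionality all}. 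The implications $\ref{ThmC functionality all}\Rightarrow\ref{ThmC functionality 2 elements}\Rightarrow\ref{ThmC functionality at least 2 elements}$ are immediate. For $\ref{ThmC functionality at least 2 elements}\Rightarrow\ref{ThmC functionality 2 elements}$ I would use that functionality is inherited by pointed subsets (a pointed inclusion is injective) and apply this to a two-element pointed subset. For $\ref{ThmC functionality 2 elements}\Rightarrow\ref{ThmC functionality all}$, given a pointed set $(Y,\ast)$ and interpretations witnessing a potential failure of functionality in it, the two right entries being distinct, after possibly swapping the two rows I may assume the right entry $c$ coming from the first row is not $\ast$; I would then post-compose both interpretations with the pointed map $Y\to\{\ast,x_1\}$ sending $c$ to $x_1$ and everything else to $\ast$, so that functionality in the two-element set forces the images of the two right entries to agree, which is absurd since these images are $x_1$ and $\ast$.

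To close the loop, $\ref{ThmC Set op M-closed relations}\Rightarrow\ref{ThmC M not trivial}$ holds because $\mathbf{Set}_\ast^{\mathsf{op}}$ has $M$-closed relations and is not equivalent to $\mathbf{1}$ (it is not even a preorder). For $\ref{ThmC M not trivial}\Rightarrow\ref{ThmC reductions}$ I would prove the contrapositive: if $M$ admits a reduction given by one of the four displayed matrices $N$, then $\mathsf{mclex}_\ast\{M\}\subseteq\mathsf{mclex}_\ast\{N\}$. Indeed, passing first to the sub-matrix of $M$ formed by the rows used in the reduction gives an inclusion of matrix classes by Proposition~1.7 of~\cite{ZJanelidze2006b}; then carrying out the row-wise substitution of variables encoded by the reduction can only shrink the matrix class, since by the first theorem of Section~\ref{section preliminaries} strict $M$-closedness of a relation already entails compatibility with the row-wise interpretation that this substitution induces; and finally deleting, permuting or duplicating left columns leaves the matrix class unchanged. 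It then remains to check that each of the four matrices $N$ satisfies $\mathsf{mclex}_\ast\{N\}=\{$categories equivalent to $\mathbf{1}\}$: interpreting $x_1$ as $\mathrm{id}_C$ and testing $N$-closedness of the relation $0\rightarrowtail C$ (for the two one-row matrices) or of the diagonal $\Delta_C\rightarrowtail C\times C$ (for the two two-row matrices) forces $\mathrm{id}_C=0_C$, hence $C\cong 0$, for every object $C$. Since categories equivalent to $\mathbf{1}$ always have $M$-closed relations, this makes $M$ trivial.

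The remaining block, and the main obstacle, is the combinatorial core. Identifying a pointed map $\{\ast,x_1,\dots,x_k\}\to\{\ast,x_1\}$ with the subset of $\{x_1,\dots,x_k\}$ it sends to $x_1$, I would first reformulate condition~\ref{ThmC functionality 2 elements} as: for all $i,i'$ and all subsets $A,A'\subseteq\{x_1,\dots,x_k\}$ with $x_{ij}\in A\Leftrightarrow x_{i'j}\in A'$ for every $j\in\{1,\dots,m\}$, one has $x_{i\,m+1}\in A\Leftrightarrow x_{i'\,m+1}\in A'$. For $\ref{ThmC reductions}\Leftrightarrow\ref{ThmC functionality 2 elements}$, unpacking a reduction given by one of the four matrices shows it to be precisely a choice of one or two rows of $M$ together with pointed maps into $\{\ast,x_1\}$ that agree on every left column but not on the right column; conversely, from such a failure of functionality, deleting duplicate left columns of the two rows involved and possibly swapping them brings the interpretation into one of the four displayed shapes. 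For $\ref{ThmC description}\Leftrightarrow\ref{ThmC functionality 2 elements}$, the key point is that the equivalence relation $R_{M_i}\vee R_{M_{i'}}\vee R^\ast_{M_{i,i'}}$ on $\{1,\dots,m\}$ records exactly which left columns are forced to receive equal values under any matching pair $(A,A')$ --- the summand $R^\ast_{M_{i,i'}}$ encoding that a column carrying $\ast$ in row $i$ or $i'$ is forced to receive the value of $\ast$, so a class of this relation not meeting such a column is the only kind that may freely be assigned the value $x_1$. Granting this, if one of the three conditions in \ref{ThmC description} fails one builds a matching pair separating the two right entries: failure of the first bullet yields one already with $i=i'$, while failure of the second or third uses that when $j$ and $j'$ lie in different classes at most one of those classes meets an $\ast$-carrying column, so the class of $j$ may be assigned the value of $\ast$ and that of $j'$ the value $x_1$; this gives $\neg\ref{ThmC description}\Rightarrow\neg\ref{ThmC functionality 2 elements}$. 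Conversely, if all three bullets hold, any hypothetical matching pair separating the right entries leads --- via the first bullet, to locate each non-$\ast$ right entry among the left entries of its own row, and then via the second or third bullet, to link the two relevant columns --- to two columns in one class receiving different values, which is impossible. The delicate point, where a slightly-too-weak necessary condition is easy to write down by mistake, is precisely this interplay between $R^\ast_{M_{i,i'}}$ and the $\ast$-carrying columns in the sufficiency direction.
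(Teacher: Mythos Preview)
Your proposal is correct and rests on the same ingredients as the paper (the identification of functionality with $\{M\}$-injectivity in $\mathbf{Set}_\ast$, the equivalence relation $R_{M_i}\vee R_{M_{i'}}\vee R^\ast_{M_{i,i'}}$, and the four trivializing reductions), but the organization differs. The paper runs the cycle $\ref{ThmC functionality at least 2 elements}\Rightarrow\ref{ThmC reductions}\Rightarrow\ref{ThmC description}\Rightarrow\ref{ThmC functionality all}$ and only then connects to \ref{ThmC Set op M-closed relations} and \ref{ThmC M not trivial}; you instead prove the functionality equivalences $\ref{ThmC functionality all}\Leftrightarrow\ref{ThmC functionality 2 elements}\Leftrightarrow\ref{ThmC functionality at least 2 elements}$ directly via a post-composition trick (collapsing $Y$ onto $\{\ast,x_1\}$), and then link \ref{ThmC description} and \ref{ThmC reductions} each to \ref{ThmC functionality 2 elements}. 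Your shortcut $\ref{ThmC functionality 2 elements}\Rightarrow\ref{ThmC functionality all}$ is genuinely simpler than the paper's route through \ref{ThmC reductions} and \ref{ThmC description}. For $\ref{ThmC M not trivial}\Rightarrow\ref{ThmC reductions}$, your abstract argument that a reduction $N$ of $M$ gives $\mathsf{mclex}_\ast\{M\}\subseteq\mathsf{mclex}_\ast\{N\}$ is more conceptual than the paper's direct application of $M$-sharpness to $0\rightarrowtail C$ and $\Delta_C\rightarrowtail C\times C$, though both amount to the same computation.

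Two minor points in your combinatorial core need tightening. In $\neg\ref{ThmC functionality 2 elements}\Rightarrow\neg\ref{ThmC reductions}$, your phrase ``deleting duplicate left columns of the two rows involved'' always produces a $2$-row reduction, but when every common left value is $\ast$ (or $m=0$) this yields $\left[\begin{smallmatrix}\ast\\\ast\end{smallmatrix}\middle|\begin{smallmatrix}x_1\\\ast\end{smallmatrix}\right]$ or $\left[\begin{smallmatrix}\,\\\,\end{smallmatrix}\middle|\begin{smallmatrix}x_1\\\ast\end{smallmatrix}\right]$, neither of which is on the list; in those cases one must take the $1$-row reduction via $f_i$ alone to land on $\left[\begin{smallmatrix}\ast\end{smallmatrix}\middle|\begin{smallmatrix}x_1\end{smallmatrix}\right]$ or $\left[\begin{smallmatrix}\,\end{smallmatrix}\middle|\begin{smallmatrix}x_1\end{smallmatrix}\right]$. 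In $\neg\ref{ThmC description}\Rightarrow\neg\ref{ThmC functionality 2 elements}$, your recipe ``assign $\ast$ to the class of $j$ and $x_1$ to that of $j'$'' may fail if the class of $j'$ happens to be the unique $\ast$-class; the paper handles this (in its proof of $\ref{ThmC reductions}\Rightarrow\ref{ThmC description}$) by first swapping $(i,j)$ with $(i',j')$ if necessary so that the class receiving $x_1$ is guaranteed not to contain any $\ast$-carrying column. Both fixes are immediate.
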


\begin{proof}
The implications \ref{ThmC functionality all}$\Rightarrow$\ref{ThmC functionality 2 elements}$\Rightarrow$\ref{ThmC functionality at least 2 elements} are obvious. Let us show \ref{ThmC functionality at least 2 elements}$\Rightarrow$\ref{ThmC reductions}. Suppose \ref{ThmC reductions} does not hold and let $(Y,\ast)$ be a pointed set and $y\in Y$ with $y\neq\ast$. If $$\left[\begin{array}{c|c} \, & x_1\end{array}\right]\quad \left(\text{respectively }\left[\begin{array}{c|c} \ast & x_1\end{array}\right],\quad \left[\begin{array}{c|c} x_1 & x_1\\ x_1 & \ast\end{array}\right],\text{ or } \left[\begin{array}{cc|c} x_1 & \ast & x_1\\ x_1 & \ast & \ast\end{array}\right]\right)$$
is a reduction of~$M$, so is
$$\left[\begin{array}{c|c} \, & y\\ \, & \ast\end{array}\right]\quad \left(\text{respectively }\left[\begin{array}{c|c} \ast & y \\ \ast & \ast\end{array}\right],\quad \left[\begin{array}{c|c} y & y\\ y & \ast\end{array}\right], \text{ or }\left[\begin{array}{cc|c} y & \ast & y\\ y & \ast & \ast\end{array}\right]\right)$$
showing that $M$ is not functional in~$(Y,\ast)$.

Let us now prove \ref{ThmC reductions}$\Rightarrow$\ref{ThmC description}. Suppose \ref{ThmC description} does not hold and let us show that \ref{ThmC reductions} does not hold neither. If $m=0$, this means that $M$ has a row with $x_{i\,m+1}\neq\ast$ as right entry (and empty left part) and therefore $M$ admits
$$\left[\begin{array}{c|c} \, & x_1\end{array}\right]$$
as a reduction. If $m>0$ and $M$ has a row whose right entry is $x_{i\,m+1}\neq\ast$ but for which $x_{i\,m+1}$ is not a left entry, then $$\left[\begin{array}{c|c} \ast & x_1\end{array}\right]$$ is a reduction of~$M$. If the second condition of \ref{ThmC description} is not satisfied, there exist $i,i'\in\{1,\dots,n\}$ and $j,j'\in\{1,\dots,m\}$ such that $i\neq i'$ and $x_{ij}=x_{i\,m+1}\neq\ast\neq x_{i'j'}=x_{i'\,m+1}$ but where $j\,R_{M_i} \vee R_{M_{i'}} \vee R^\ast_{M_{i,i'}} \,j'$ does not hold. By definition of $R^\ast_{M_{i,i'}}$, up to swapping $(i,j)$ with $(i',j')$, we can suppose without loss of generality that if $x_{ij''}=\ast$ or $x_{i'j''}=\ast$ for some $j''\in\{1,\dots,m\}$, then $j\,R_{M_i} \vee R_{M_{i'}} \vee R^\ast_{M_{i,i'}} \,j''$ does not hold neither. The columns of $M$ can then be divided into two disjoint sets: 
the $j''$-th column will be in the first set if $j\,R_{M_i} \vee R_{M_{i'}} \vee R^\ast_{M_{i,i'}} \,j''$ and in the second set otherwise. Interpreting, in the $i$-th and the $i'$-th rows of $M$, each entry in the first set of columns as $x_1$ and the other ones as~$\ast$, this guarantees that $M$ admits
$$\left[\begin{array}{cc|c} x_1 & \ast & x_1\\ x_1 & \ast & \ast\end{array}\right]$$
as a reduction. Suppose now the third condition of \ref{ThmC description} is not satisfied with $i,i'\in\{1,\dots,n\}$ and $j\in\{1,\dots,m\}$ such that $x_{ij}=x_{i\,m+1}\neq\ast=x_{i'\,m+1}$. If there is no $\ast$ in the left part of the $i$-th and $i'$-th rows of~$M$, it admits
$$\left[\begin{array}{c|c} x_1 & x_1\\ x_1 & \ast\end{array}\right]$$
as a reduction. Otherwise, there exists $j'\in\{1,\dots,m\}$ such that either $x_{ij'}=\ast$ or $x_{i'j'}=\ast$. By assumption, we know that $j\,R_{M_i} \vee R_{M_{i'}} \vee R^\ast_{M_{i,i'}} \,j'$ does not hold. Dividing the columns of $M$ in two sets and considering the interpretation as above, we also get that $M$ admits
$$\left[\begin{array}{cc|c} x_1 & \ast & x_1\\ x_1 & \ast & \ast\end{array}\right]$$
as a reduction.

Next, we prove \ref{ThmC description}$\Rightarrow$\ref{ThmC functionality all}. Suppose \ref{ThmC functionality all} does not hold. We can thus find a pointed set $(Y,\ast)$, $i,i'\in\{1,\dots,n\}$ and two pointed functions $f_i,f_{i'}\colon\{\ast,x_1,\dots,x_k\}\to Y$ such that $f_i(x_{ij})=f_{i'}(x_{i'j})$ for all $j\in\{1,\dots,m\}$ but $f_i(x_{i\,m+1})\neq f_{i'}(x_{i'\,m+1})$. If $i=i'$, then the first condition of~\ref{ThmC description} gets violated. Let us thus suppose that $i\neq i'$. We can see that, if $j_1,j_2\in\{1,\dots,m\}$ satisfy $j_1\, R_{M_i} \vee R_{M_{i'}} \vee R^\ast_{M_{i,i'}} \,j_2$, then $f_i(x_{ij_1})=f_i(x_{ij_2})=f_{i'}(x_{i'j_1})=f_{i'}(x_{i'j_2})$. Since we cannot have $x_{i\,m+1}=x_{i'\,m+1}=\ast$, we can also suppose without loss of generality that $x_{i\,m+1}\neq\ast$. Not to violate the first condition of~\ref{ThmC description}, there should exist $j\in\{1,\dots,m\}$ such that $x_{ij}=x_{i\,m+1}$. If $x_{i'\,m+1}\neq\ast$, then, again not to contradict the first condition of~\ref{ThmC description}, there exists $j'\in\{1,\dots,m\}$ such that $x_{i'j'}=x_{i'\,m+1}$. But since $f_i(x_{ij})=f_i(x_{i\,m+1})\neq f_{i'}(x_{i'\,m+1})=f_{i'}(x_{i'j'})$, one cannot have $j\,R_{M_i} \vee R_{M_{i'}} \vee R^\ast_{M_{i,i'}} \,j'$, violating the second condition of~\ref{ThmC description}. If $x_{i'\,m+1}=\ast$, not to contradict the third condition of~\ref{ThmC description}, there exists $j'\in\{1,\dots,m\}$ such that $j\,R_{M_i} \vee R_{M_{i'}} \vee R^\ast_{M_{i,i'}} \,j'$ and either $x_{ij'}=\ast$ or $x_{i'j'}=\ast$. By our remark above, this means that $f_i(x_{i\,m+1})=f_i(x_{ij})=f_i(x_{ij'})=f_{i'}(x_{i'j'})=\ast=f_{i'}(x_{i'\,m+1})$, leading to a contradiction. We have therefore already proved the equivalences \ref{ThmC description}$\Leftrightarrow$\ref{ThmC reductions}$\Leftrightarrow$\ref{ThmC functionality all}$\Leftrightarrow$\ref{ThmC functionality 2 elements}$\Leftrightarrow$\ref{ThmC functionality at least 2 elements}.

Since $\mathbf{Set}_\ast^\mathsf{op}$ is not equivalent to the single morphism category, the implication \ref{ThmC Set op M-closed relations}$\Rightarrow$\ref{ThmC M not trivial} is straightforward from the definition of a trivial matrix. To prove \ref{ThmC functionality all}$\Rightarrow$\ref{ThmC Set op M-closed relations}, it suffices to recall that $M$ is functional in a pointed set $(Y,\ast)$ exactly when $(Y,\ast)$ is an $\{M\}$-injective object in $\mathbf{Set}_\ast$, i.e., when internal $n$-ary relations in $\mathbf{Set}_\ast^\mathsf{op}$ are $M$-closed over $(Y,\ast)$. It remains now to prove \ref{ThmC M not trivial}$\Rightarrow$\ref{ThmC reductions}. We suppose that \ref{ThmC reductions} does not hold and we consider a finitely complete pointed category $\mathbb{C}$ with $M$-closed relations. If $M$ has $\left[\begin{array}{c|c} \, & x_1\end{array}\right]$ or $\left[\begin{array}{c|c} \ast & x_1\end{array}\right]$ as a reduction, then, for an arbitrary object $C$ in~$\mathbb{C}$, the identity morphism $C\to C$ must factors through the subobject $0\rightarrowtail C$ where $0$ is the zero object of~$\mathbb{C}$, proving that $C$ is itself a zero object. If $M$ admits $$\left[\begin{array}{c|c} x_1 & x_1\\ x_1 & \ast\end{array}\right]\quad \text{or}\quad \left[\begin{array}{cc|c} x_1 & \ast & x_1\\ x_1 & \ast & \ast\end{array}\right]$$ as a reduction, then the morphism $(1_C,0)\colon C\to C\times C$ induced by the identity on an object $C$ and the zero morphism $C\to C$ should factors through the diagonal $(1_C,1_C)\colon C\rightarrowtail C\times C$ proving again that $1_C$ is the zero morphism and thus $C$ is a zero object. This proves that $\mathbb{C}$ is equivalent to~$\mathbf{1}$ and thus $M$ is a trivial matrix.
\end{proof}

The equivalence \ref{ThmC M not trivial}$\Leftrightarrow$\ref{ThmC description} of this theorem, in the case where $M$ is a non-pointed matrix, reduces to the characterization of trivial matrices in the sense of~\cite{HoefnagelJacqminJanelidze} (i.e., non-pointed matrices $M$ for which any finitely complete category in $\mathsf{mclex}\{M\}$ is a preorder). One thus has the following corollary.

\begin{corollary}\label{corollary trivial matrices}
A non-pointed matrix $M\in\Mnp$ is trivial (in the sense of the present paper) if and only if it is trivial in the sense of~\cite{HoefnagelJacqminJanelidze}.
\end{corollary}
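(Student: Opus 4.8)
The plan is to read the corollary off the equivalence \ref{ThmC M not trivial}$\Leftrightarrow$\ref{ThmC description} of Theorem~\ref{ThmC}, together with the characterization of trivial \emph{non-pointed} matrices supplied by Theorem~2.3 of~\cite{HoefnagelJacqminJanelidze} (which, for a non-pointed $M$, describes exactly when $\mathsf{mclex}\{M\}$ consists of preorders by a combinatorial condition on the rows of $M$). Thus the only thing one really has to do is to check that, when $M=[x_{ij}]_{i,j}\in\Mnp$, the three bullets of condition~\ref{ThmC description} collapse to precisely the combinatorial condition of loc.\ cit.

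Concretely, I would first record the simplifications of~\ref{ThmC description} that are valid when no entry of $M$ equals~$\ast$. In that case the equivalence relation $R^\ast_{M_{i,i'}}$ reduces to the equality relation on $\{1,\dots,m\}$, since for $j\neq j'$ its second defining disjunct can never hold; hence $R_{M_i}\vee R_{M_{i'}}\vee R^\ast_{M_{i,i'}}$ is just $R_{M_i}\vee R_{M_{i'}}$. Moreover every side condition of the form ``$\,\cdot\neq\ast$'' in the first two bullets is automatically satisfied, and the hypothesis of the third bullet forces $x_{i'\,m+1}=\ast$, so that bullet is vacuous. Therefore, for $M\in\Mnp$, condition~\ref{ThmC description} becomes: (i) every row of $M$ contains its right entry among its left entries; and (ii) for all $i\neq i'$ and all $j,j'\in\{1,\dots,m\}$ with $x_{ij}=x_{i\,m+1}$ and $x_{i'j'}=x_{i'\,m+1}$, one has $j\,(R_{M_i}\vee R_{M_{i'}})\,j'$. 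I would then match (i)--(ii) term by term against the condition in Theorem~2.3 of~\cite{HoefnagelJacqminJanelidze}; once this identification is made, Theorem~\ref{ThmC} gives ``$M$ not trivial in our sense $\Leftrightarrow$ (i)--(ii)'' while~\cite{HoefnagelJacqminJanelidze} gives ``$M$ not trivial in that sense $\Leftrightarrow$ (i)--(ii)'', and the corollary follows by contraposition.

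As a sanity check on one direction that does not need Theorem~\ref{ThmC}: if $M$ is trivial in the sense of~\cite{HoefnagelJacqminJanelidze}, so that every finitely complete category in $\mathsf{mclex}\{M\}$ is a preorder, then every finitely complete pointed category in $\mathsf{mclex}_\ast\{M\}$ is a finitely complete pointed preorder, and such a category has, for each object $C$, a pair of morphisms $0\to C$ and $C\to 0$ with $0$ the zero object, forcing $C\cong 0$; hence it is equivalent to~$\mathbf{1}$ and $M$ is trivial in our sense. The content of the corollary is really the converse, which is where Theorem~\ref{ThmC} is used.

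I expect the only genuine obstacle to be the bookkeeping in the term-by-term matching of (i)--(ii) with the statement of Theorem~2.3 of~\cite{HoefnagelJacqminJanelidze}. The delicate point is that, although $M$ itself is non-pointed, its \emph{reductions} in our (pointed) sense may still exhibit $\ast$ in their target --- for instance the witness $\left[\begin{array}{cc|c} x_1 & \ast & x_1\\ x_1 & \ast & \ast\end{array}\right]$ from~\ref{ThmC reductions} --- where that $\ast$ is a genuine constant of $\mathbf{Set}_\ast$, whereas in the non-pointed setting of~\cite{HoefnagelJacqminJanelidze} there is no constant and the corresponding ``collapsing row configuration'' is detected by relations living on a one- or two-element set. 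One must therefore fix a careful dictionary between the pointed witnesses of non-triviality used here and the non-pointed ones of~\cite{HoefnagelJacqminJanelidze}; once that dictionary is in place, no further argument is required.
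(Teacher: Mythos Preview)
Your proposal is correct and follows essentially the same route as the paper: the paper simply observes (in the paragraph preceding the corollary) that for a non-pointed $M$ the equivalence \ref{ThmC M not trivial}$\Leftrightarrow$\ref{ThmC description} of Theorem~\ref{ThmC} reduces to the characterization of trivial matrices given by Theorem~2.3 of~\cite{HoefnagelJacqminJanelidze}, and you have spelled out exactly this reduction (that $R^\ast_{M_{i,i'}}$ becomes equality, the third bullet becomes vacuous, and the remaining two bullets become the conditions of loc.\ cit.). Your final paragraph about pointed reductions is unnecessary since your argument goes through condition~\ref{ThmC description} rather than~\ref{ThmC reductions}, but it does no harm.
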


Let us now turn our attention to anti-trivial matrices. A matrix $M\in\M$ is said to be \emph{anti-trivial} if all finitely complete pointed categories have $M$-closed relations. The following theorem characterizes anti-trivial matrices. We invite the reader to compare the equivalence \ref{theorem anti-trivial 1}$\Leftrightarrow$\ref{theorem anti-trivial pointed sets} of it with the equivalence \ref{ThmC M not trivial}$\Leftrightarrow$\ref{ThmC Set op M-closed relations} of Theorem~\ref{ThmC}.

\begin{theorem}\label{theorem anti-trivial}
The following conditions on a matrix $M\in\M$ are equivalent:
\begin{enumerate}[label=(\arabic*)]
\item\label{theorem anti-trivial 1} $M$ is anti-trivial.
\item\label{theorem anti-trivial pointed sets} $\mathbf{Set}_\ast$ has $M$-closed relations.
\item\label{theorem anti-trivial right column} All the entries of the right column of $M$ are $\ast$'s or this right column of $M$ can be found among its left columns.
\end{enumerate}
\end{theorem}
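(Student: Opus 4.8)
The plan is to prove the cycle $\ref{theorem anti-trivial 1}\Rightarrow\ref{theorem anti-trivial pointed sets}\Rightarrow\ref{theorem anti-trivial right column}\Rightarrow\ref{theorem anti-trivial 1}$. The implication $\ref{theorem anti-trivial 1}\Rightarrow\ref{theorem anti-trivial pointed sets}$ is immediate: $\mathbf{Set}_\ast$ is a particular finitely complete pointed category, so if every such category has $M$-closed relations, then so does $\mathbf{Set}_\ast$. Similarly, the implication $\ref{theorem anti-trivial right column}\Rightarrow\ref{theorem anti-trivial 1}$ should be a short direct verification: if the right column of $M$ consists entirely of $\ast$'s, then in any pointed category and for any internal relation $r$ on an object $C$, the required morphism $v\colon X\to R$ in the definition of compatibility can be taken to be the zero morphism $X\to 0\to R$ (which exists since $R$ is an object of a pointed category and the zero morphism composed with each $r_i$ gives the zero morphism, matching the interpretation of the right column under any pointed $f$, since $f(\ast)=\ast$ and $\ast$ post-composed appropriately is zero). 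If instead the right column of $M$ coincides with one of its left columns, say the $j_0$-th, then given the hypothesis that $u_1,\dots,u_m\colon X\to R$ realize the left columns, we simply take $v=u_{j_0}$; this works because for each interpretation $f$, the entry $f(x_{i\,m+1})=f(x_{i j_0})=r_i u_{j_0}$ for all $i$.

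The remaining implication $\ref{theorem anti-trivial pointed sets}\Rightarrow\ref{theorem anti-trivial right column}$ is the substantive one, and I expect it to be the main obstacle. The strategy is to prove the contrapositive: assume the right column of $M$ has at least one non-$\ast$ entry, and that this right column does not equal any left column, and construct a specific pointed relation in $\mathbf{Set}_\ast$ witnessing the failure of $M$-closedness. The natural candidate is to work on a small pointed set, e.g.\ $Y=\{\ast,y\}$ with two elements, or a slightly larger one if needed, and to take $R$ to be the ``graph-like'' relation determined by the left columns: roughly, let $R\subseteq Y^n$ be the smallest pointed relation containing the $m$ tuples $(x_{1j},\dots,x_{nj})$ (reinterpreted in $Y$ via some chosen pointed function that sends a chosen variable to $y$), i.e.\ $R$ is literally the set of these $m$ column-tuples together with the zero tuple $(\ast,\dots,\ast)$. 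By construction the left columns are realized by the evident $u_j$, but the right column tuple $(x_{1\,m+1},\dots,x_{n\,m+1})$ — evaluated under the matching interpretation — need not lie in $R$, precisely because the right column is not a repeat of a left column and is not the all-$\ast$ tuple. One must choose the interpretation $f\colon\{\ast,x_1,\dots,x_k\}\to Y$ carefully (perhaps collapsing all variables to $y$, or using a cleverer assignment) so that the image of the right column lands outside $R$; the delicate point is that variable identifications across rows can cause the right-column image to accidentally coincide with a left-column image even when the symbolic columns differ, so one may need to iterate over all pointed functions $Y\to Y$ or enlarge $Y$ to separate the variables.

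To organize the case analysis cleanly, I would phrase the failure in terms of reductions and functionality of $\mathbf{Set}_\ast$-relations, mirroring the bookkeeping already set up for Theorem~\ref{ThmC}: the condition that the right column is neither all-$\ast$ nor a copy of a left column is exactly the condition that blocks the two trivial ways of supplying $v$, and on a two-element pointed set these are the only obstructions one must rule out. Concretely, I expect that taking $Y=\{\ast,y\}$ and the interpretation sending every $x_i$ to $y$, with $R=\{(\ast,\dots,\ast)\}\cup\{\text{left-column tuples}\}$, already does the job: the right-column tuple under this interpretation is a $\{\ast,y\}$-vector that is not all-$\ast$ (some entry is a non-$\ast$ variable, hence maps to $y$) and, because the symbolic right column is not a left column, its pattern of $\ast$'s and $y$'s differs from every left-column tuple — unless two distinct variables in a row got identified, which does not happen since we only had to separate $\ast$ from non-$\ast$, and all non-$\ast$ variables go to the single element $y$. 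Hence $v$ cannot exist and $M$-closedness fails. This gives the contrapositive and closes the cycle.
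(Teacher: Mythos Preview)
Your handling of $\ref{theorem anti-trivial 1}\Rightarrow\ref{theorem anti-trivial pointed sets}$ and $\ref{theorem anti-trivial right column}\Rightarrow\ref{theorem anti-trivial 1}$ is fine and matches the paper. The gap is in $\ref{theorem anti-trivial pointed sets}\Rightarrow\ref{theorem anti-trivial right column}$: your two-element construction does not work. Sending every variable to the single element $y$ collapses distinct variables, so two \emph{symbolically} different columns can have the \emph{same} image in $\{\ast,y\}^n$. For instance, take $M=\left[\begin{array}{c|c} x_1 & x_2 \end{array}\right]\in\M(1,1,2)$: condition~\ref{theorem anti-trivial right column} fails (the right column $x_2$ is neither $\ast$ nor equal to the left column $x_1$), yet under your interpretation both columns map to $y$, so the right-column tuple does lie in your $R$ and no contradiction is obtained. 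Your closing justification (``which does not happen since we only had to separate $\ast$ from non-$\ast$'') is exactly backwards: collapsing all non-$\ast$ entries to $y$ is precisely what \emph{causes} the identification.

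The paper avoids this entirely by taking the ``generic'' pointed set $\{\ast,x_1,\dots,x_k\}$ itself, the identity interpretation, and $R$ equal to the set of left columns of $M$ together with the all-$\ast$ column. Then the right column of $M$ lies in $R$ if and only if it literally is a left column or the all-$\ast$ column, which is exactly condition~\ref{theorem anti-trivial right column}. This is both simpler than what you propose and sidesteps the case analysis you were worried about; you already noted that one ``may need to enlarge $Y$ to separate the variables'', and that enlargement (to the free pointed set on $k$ generators) is the whole argument.
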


\begin{proof}
The implication \ref{theorem anti-trivial 1}$\Rightarrow$\ref{theorem anti-trivial pointed sets} follows immediately from the definition of anti-trivial matrices. For the implication \ref{theorem anti-trivial pointed sets}$\Rightarrow$\ref{theorem anti-trivial right column}, if $M\in\M(n,m,k)$, we consider the $n$-ary pointed relation $R$ on the pointed set $\{\ast,x_1,\dots,x_k\}$ formed by the left columns of~$M$ and the column made of $n$ many~$\ast$'s. If $\mathbf{Set}_\ast$ has $M$-closed relations, $R$ should also contain the right column of~$M$, which implies~\ref{theorem anti-trivial right column}. Finally, the implication \ref{theorem anti-trivial right column}$\Rightarrow$\ref{theorem anti-trivial 1} follows from the definition of $M$-closedness.
\end{proof}

Again, the equivalence \ref{theorem anti-trivial 1}$\Leftrightarrow$\ref{theorem anti-trivial right column} of the above theorem, in the case where $M$ is a non-pointed matrix, reduces to the characterization of anti-trivial matrices in the sense of~\cite{HoefnagelJacqminJanelidze} (i.e., non-pointed matrices $M$ for which any finitely complete category is in $\mathsf{mclex}\{M\}$). One thus has the following corollary.

\begin{corollary}\label{corollary anti-trivial matrices}
A non-pointed matrix $M\in\Mnp$ is anti-trivial (in the sense of the present paper) if and only if it is anti-trivial in the sense of~\cite{HoefnagelJacqminJanelidze}.
\end{corollary}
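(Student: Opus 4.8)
The plan is to read the corollary straight off Theorem~\ref{theorem anti-trivial}, exploiting that its combinatorial criterion \ref{theorem anti-trivial right column} collapses to a purely ``left-column'' condition once the symbol $\ast$ is forbidden, and that nothing in the proof of that criterion actually used pointedness.

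First I would recall the two competing definitions side by side: in the present paper $M$ is anti-trivial when every finitely complete pointed category has $M$-closed relations, whereas in the sense of~\cite{HoefnagelJacqminJanelidze} a non-pointed $M$ is anti-trivial when every finitely complete category lies in $\mathsf{mclex}\{M\}$, i.e.\ has $M$-closed relations. The implication from anti-triviality in the sense of~\cite{HoefnagelJacqminJanelidze} to anti-triviality in the present sense is immediate, since finitely complete pointed categories are in particular finitely complete categories.

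For the reverse implication, assume $M\in\Mnp(n,m,k)$ is anti-trivial in the present sense. By Theorem~\ref{theorem anti-trivial} (condition \ref{theorem anti-trivial right column}), either all entries of the right column of $M$ are $\ast$'s or the right column $M_\mathsf{r}$ occurs among the left columns of $M$; as $M$ is non-pointed and $n>0$, the first alternative is excluded, so $M_\mathsf{r}$ is a left column of $M$. I would then re-examine the argument proving \ref{theorem anti-trivial right column}$\Rightarrow$\ref{theorem anti-trivial 1} in Theorem~\ref{theorem anti-trivial}: it only invokes the bare definition of $M$-closedness --- if $M_\mathsf{r}$ coincides with the $j$-th left column then the witness supplied for the $j$-th column already witnesses the right column --- and makes no use of a zero object. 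Hence it applies verbatim to an arbitrary finitely complete category, giving that every such category has $M$-closed relations, i.e.\ $M$ is anti-trivial in the sense of~\cite{HoefnagelJacqminJanelidze}.

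The main (and, realistically, only) obstacle is the bookkeeping observation that the implication \ref{theorem anti-trivial right column}$\Rightarrow$\ref{theorem anti-trivial 1} in Theorem~\ref{theorem anti-trivial} is genuinely pointedness-free; once that is noted, the corollary is a formal transfer. Equivalently, one could phrase the whole proof by remarking that both notions of anti-triviality are pinned down by the single syntactic requirement that $M_\mathsf{r}$ appear among the left columns of $M$ --- by Theorem~\ref{theorem anti-trivial} on the pointed side and by the corresponding characterization of~\cite{HoefnagelJacqminJanelidze} on the non-pointed side --- and are therefore equivalent.
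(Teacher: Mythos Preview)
Your proposal is correct and follows essentially the same approach as the paper: both notions of anti-triviality are identified with the syntactic condition that $M_\mathsf{r}$ appear among the left columns of $M$ (via Theorem~\ref{theorem anti-trivial}\ref{theorem anti-trivial right column} on the pointed side and the corresponding characterization from~\cite{HoefnagelJacqminJanelidze} on the non-pointed side), and hence are equivalent. Your alternative route through the observation that the implication \ref{theorem anti-trivial right column}$\Rightarrow$\ref{theorem anti-trivial 1} is pointedness-free is also fine, but the direct comparison of the two syntactic characterizations is exactly what the paper does.
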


\section{The posets $\mathsf{Mclex}$ and $\mathsf{Mclex}_\ast$ and Bourn localizations}\label{section the posets and bourn localizations}

In~\cite{HoefnagelJacqminJanelidze}, we studied the collection $\mathsf{Mclex}$ of collections $\mathsf{mclex}\{M\}$ of finitely complete categories given by non-pointed matrices~$M\in\Mnp$. In a similar way, we introduce here the collection $\mathsf{Mclex}_\ast$ of collections $\mathsf{mclex}_\ast\{M\}$ for matrices $M\in\M$. We regard $\mathsf{Mclex}$ and $\mathsf{Mclex}_\ast$ as posets with the order given by inclusion. We have an order-preserving function
$$(-)_\ast\colon\mathsf{Mclex}\to\mathsf{Mclex}_\ast$$
given by intersection with the collection of pointed categories, i.e., for a non-pointed matrix~$M$, $(-)_\ast$ sends $\mathsf{mclex}\{M\}$ to $\mathsf{mclex}_\ast\{M\}$.

In order to describe a right adjoint to this function (seen as a functor between preorder categories), we need to recall the concept of Bourn localizations. Let $\mathbb{C}$ be a given finitely complete category. A \emph{point} in $\mathbb{C}$ is a pair of morphisms $(p \colon A \twoheadrightarrow I, s \colon I \rightarrowtail A)$ such that the composite $ps$ is~$1_I$, the identity on~$I$. A morphism of points $(p,s) \rightarrow (p',s')$ is a pair of morphisms $(u \colon A \rightarrow A', v \colon I \rightarrow I')$ in $\mathbb{C}$ such that $vp=p'u$ and $us=s'v$.
$$\xymatrix{A \ar[r]^-{u} \ar@<-4pt>@{->>}[d]_-{p} & A' \ar@<-4pt>@{->>}[d]_-{p'} \\ I \ar[r]_-{v} \ar@<-4pt>@{ >->}[u]_-{s} & I' \ar@<-4pt>@{ >->}[u]_-{s'}}$$
This forms the category $\mathsf{Pt}(\mathbb{C})$ of points of~$\mathbb{C}$. Together with this category, we have a forgetful functor
\begin{align*}
\pi \colon\quad\,\mathsf{Pt}(\mathbb{C}) &\longrightarrow \mathbb{C}\\
\xymatrix{A \ar@{->>}@<-4pt>[r]_-{p} & I \ar@<-4pt>@{ >->}[l]_-{s}} &\longmapsto I\\
(u,v) & \longmapsto v
\end{align*}
called the \emph{fibration of points}~\cite{Bourn1991}. Given an object $I$ in~$\mathbb{C}$, the fibre of $\pi$ over $I$ is denoted by $\mathsf{Pt}_I(\mathbb{C})$ and consists of the subcategory of $\mathsf{Pt}(\mathbb{C})$ of points and morphisms mapped by $\pi$ to $I$ and $1_I$ respectively. Each such fibre is a finitely complete pointed category, the zero object being the point $(1_I,1_I)$. Using the terminology recalled in Example~\ref{example matrices}, D.~Bourn showed in~\cite{Bourn1996} that the following statements are equivalent:
\begin{itemize}
\item $\mathbb{C}$ is a Mal'tsev category;
\item for all objects $I$ of~$\mathbb{C}$, the fibre $\mathsf{Pt}_I(\mathbb{C})$ is unital;
\item for all objects $I$ of~$\mathbb{C}$, the fibre $\mathsf{Pt}_I(\mathbb{C})$ is strongly unital;
\item for all objects $I$ of~$\mathbb{C}$, the fibre $\mathsf{Pt}_I(\mathbb{C})$ is a Mal'tsev category.
\end{itemize}
In~\cite{Janelidze2005}, Z.~Janelidze showed that these conditions are also equivalent to
\begin{itemize}
\item for all objects $I$ of~$\mathbb{C}$, the fibre $\mathsf{Pt}_I(\mathbb{C})$ is subtractive.
\end{itemize}
As in~\cite{ZJanelidze2006b}, for a collection $\mathcal{C}$ of finitely complete pointed categories, we denote by $\mathsf{Loc}(\mathcal{C})$ the \emph{Bourn localization} of~$\mathcal{C}$, i.e., the collection of finitely complete categories $\mathbb{C}$ for which, given any object $I$ in~$\mathbb{C}$, the category $\mathsf{Pt}_I(\mathbb{C})$ is in~$\mathcal{C}$. Given a non-pointed matrix $M=[x_{ij}]_{i,j}\in\Mnp(n,m,k)$ and a variable $x$ among $x_1,x_2,x_3,\dots$, we denote by $M[x\to\ast]$ the matrix in $\M(n,m,k)$ obtained by replacing each occurrence of $x$ in $M$ by~$\ast$. We say that the pair $(M,x)$ is \emph{admissible} if there exists a left column of~$M$
$$\left[\begin{array}{c} x_{1j} \\ \vdots \\ x_{nj} \end{array}\right]$$
such that, for each $i\in\{1,\dots,n\}$, if there is a $j'\in\{1,\dots,m+1\}$ such that $x_{ij'}=x$, then $x_{ij}=x$. The following theorem generalizes the above mentioned results from~\cite{Bourn1996} and~\cite{Janelidze2005}.

\begin{theorem}\label{theorem localization}\cite{ZJanelidze2006b}.
Given a non-pointed matrix $M\in\Mnp$ and a variable $x$ among $x_1,x_2,x_3,\dots$ for which the pair $(M,x)$ is admissible, one has
$$\mathsf{mclex}\{M\}=\mathsf{Loc}(\mathsf{mclex}_\ast\{M[x\to\ast]\}).$$
\end{theorem}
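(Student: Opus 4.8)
The plan is to prove the two inclusions separately, exploiting the fibrational structure of $\mathsf{Pt}(\mathbb{C})$ and the pointedness of its fibres. The key observation linking the two sides is the following: if $r\colon R\to C_1\times\dots\times C_n$ is an internal $n$-ary relation in a fibre $\mathsf{Pt}_I(\mathbb{C})$ (where $C_i = (p_i\colon A_i\twoheadrightarrow I, s_i)$ are points over $I$), then testing $M[x\to\ast]$-closedness of $r$ amounts to testing a certain instance of $M$-closedness of a related relation in $\mathbb{C}$ itself, where the variable $x$ plays a role that, when set to $\ast$ in the fibre, corresponds to substituting the split $s_i\colon I\to A_i$ (the zero of the fibre) for the $x$-columns. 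Conversely, an interpretation of $M$ in $\mathbb{C}(X,C)$ for an object $C$ of $\mathbb{C}$ can be rephrased fibrewise by pulling back along $X\to 1$ or, more precisely, by passing to the point $(C\times X\twoheadrightarrow X, \langle c_0, 1_X\rangle)$ over $X$ for a chosen base point $c_0\colon X\to C$; the admissibility of $(M,x)$ is precisely what guarantees that such a base point can be coherently chosen so that the $x$-columns get interpreted as the splitting.

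First I would prove $\mathsf{Loc}(\mathsf{mclex}_\ast\{M[x\to\ast]\}) \subseteq \mathsf{mclex}\{M\}$. Take $\mathbb{C}$ such that every fibre $\mathsf{Pt}_I(\mathbb{C})$ has $M[x\to\ast]$-closed relations; I must show $\mathbb{C}$ has $M$-closed relations. Given an $n$-ary relation $(r_i\colon R\to C)$ on an object $C$, an object $X$, and an interpretation of $M$ of type $\mathbb{C}(X,C)$, I use the admissibility of $(M,x)$: let $j_0$ be an admissible left column, and let $c\colon X\to C$ be the morphism assigned to the variable $x$ by the interpretation (when $x$ appears; otherwise the argument is easier). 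Form the point $P_X = (p\colon C\times X\twoheadrightarrow X,\ s = \langle c, 1_X\rangle\colon X\rightarrowtail C\times X)$ over $X$, together with the point $P^R$ over $X$ built analogously from $R$ using the pullback of $c$ along the $r_i$. The relation $r$ lifts to a relation in $\mathsf{Pt}_X(\mathbb{C})$ between copies of $P^R$ and $P_X$; admissibility ensures the $x$-columns of the given interpretation correspond exactly to the zero morphism of this fibre, so the interpretation of $M$ becomes an interpretation of $M[x\to\ast]$ in the fibre. Applying the hypothesis that this fibre relation is $M[x\to\ast]$-closed yields the required factorization $v\colon X\to R$ after projecting back along $\pi$.

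For the reverse inclusion $\mathsf{mclex}\{M\} \subseteq \mathsf{Loc}(\mathsf{mclex}_\ast\{M[x\to\ast]\})$, I would take $\mathbb{C}$ with $M$-closed relations, fix an object $I$, and show $\mathsf{Pt}_I(\mathbb{C})$ has $M[x\to\ast]$-closed relations. An $n$-ary relation on a point $C=(p\colon A\twoheadrightarrow I, s)$ in the fibre is, in particular, an $n$-ary relation on $A$ in $\mathbb{C}$ (forgetting the point structure and the fact that the relation lies over the diagonal point). Given an interpretation of $M[x\to\ast]$ of type $\mathsf{Pt}_I(\mathbb{C})(X_{\mathrm{pt}}, C)$ for a point $X_{\mathrm{pt}}=(q\colon B\twoheadrightarrow I, t)$, I read it as an interpretation of $M$ in $\mathbb{C}(B, A)$ by interpreting each $x$-column as the composite $st q\cdots$ — concretely, as the "zero morphism of the fibre" $B\twoheadrightarrow I\rightarrowtail A$, which is exactly what $\ast$ in $M[x\to\ast]$ demands when read in the pointed fibre, while the non-$x$ entries are interpreted by the morphisms over $I$ given in the original data. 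Because the original relation (as a relation in $\mathbb{C}$) is $M$-closed over $B$, one gets the factorization $v\colon B\to R$; one then checks that $v$ actually lives in the fibre $\mathsf{Pt}_I(\mathbb{C})$ (i.e., respects the points) — this compatibility is where admissibility of $(M,x)$ is used again, to ensure $v$ splits correctly over $I$.

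The main obstacle I anticipate is the bookkeeping in translating a fibrewise interpretation into a genuine interpretation in $\mathbb{C}$ and back, keeping track of exactly how the admissible left column forces the $x$-entries to be interpreted as the fibre's zero morphism, and then verifying that the witness $v$ produced in $\mathbb{C}$ is in fact a morphism of points (so that it lives in the fibre). Getting the admissibility condition to do precisely this work — neither more nor less — is the crux; the rest (reducing strict $M$-closedness for relations to $M$-closedness for relations on objects via the first theorem of the excerpt, and the closure properties of $S$-injective-like constructions) is routine. I would also double-check the degenerate cases where $x$ does not appear in some row of $M$, or where a row of $M[x\to\ast]$ becomes anti-trivial, since these should be handled painlessly by Theorem~\ref{theorem anti-trivial} but are easy to overlook.
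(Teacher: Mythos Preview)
The paper does not give its own proof of this theorem: it is stated with a citation to~\cite{ZJanelidze2006b} and used as a black box. There is therefore nothing in the paper to compare your argument against.

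That said, your outline is the standard one from the cited reference, and the two inclusions are indeed proved by passing back and forth between interpretations in $\mathbb{C}$ and in the fibre $\mathsf{Pt}_X(\mathbb{C})$. One imprecision worth flagging: in your argument for $\mathsf{Loc}(\mathsf{mclex}_\ast\{M[x\to\ast]\}) \subseteq \mathsf{mclex}\{M\}$ you build a \emph{single} point $P_X = (C\times X\twoheadrightarrow X,\ \langle c,1_X\rangle)$ and speak of lifting $r$ to a relation on $P_X$. But admissibility only guarantees that the $j_0$-th entry of row $i$ equals $x$ \emph{when $x$ occurs in row $i$}; for rows where $x$ does not occur, the $j_0$-th entry may be some other variable, and then $r_i u_{j_0}$ need not equal $c$. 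The clean fix is to use \emph{strict} $M[x\to\ast]$-closedness in the fibre (equivalent to $M[x\to\ast]$-closedness there by the first theorem of Section~\ref{section preliminaries}) and allow distinct codomain points $P_i = (C\times X\twoheadrightarrow X,\ \langle f(x_{ij_0}),1_X\rangle)$, with the domain point $(R\times X\twoheadrightarrow X,\ \langle u_{j_0},1_X\rangle)$; then $r_i\times 1_X$ is automatically a morphism of points for every $i$, and the row-wise interpretation of $M[x\to\ast]$ in the fibre goes through. With this adjustment your bookkeeping concern is exactly the right one, and the rest of your sketch is sound.
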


In particular, this theorem shows that for a non-pointed matrix $M\in\Mnp(n,m,k)$, one has $$\mathsf{mclex}\{M\}=\mathsf{Loc}(\mathsf{mclex}_\ast\{M\})$$ provided that $m>0$. If $m=0$, $\mathsf{mclex}\{M\}$ is the collection of categories equivalent to $\mathbf{1}$ and $\mathsf{Loc}(\mathsf{mclex}_\ast\{M\})$ is the collection of finitely complete preorders. So, in any case, one has at least the inclusion $\mathsf{mclex}\{M\} \subseteq \mathsf{Loc}(\mathsf{mclex}_\ast\{M\})$. Moreover, given a (pointed) matrix $N\in\M(n,m,k)$, Theorem~\ref{theorem localization} implies that $$\mathsf{Loc}(\mathsf{mclex}_\ast\{N\})=\mathsf{mclex}\{N_{\mathsf{loc}}\}$$ where $N_{\mathsf{loc}}\in\Mnp(n,m+1,k+1)$ is obtained from $N$ by first adding a left column of $\ast$'s and then replace all $\ast$'s by~$x_{k+1}$. Bourn localizations thus induce an order-preserving function
$$\mathsf{Loc}\colon\mathsf{Mclex}_\ast\to\mathsf{Mclex}$$
sending $\mathsf{mclex}_\ast\{N\}$ to $\mathsf{Loc}(\mathsf{mclex}_\ast\{N\})$. It is easy to see that this function gives a right adjoint to $(-)_\ast\colon\mathsf{Mclex}\to\mathsf{Mclex}_\ast$, i.e., that
$$\mathsf{mclex}\{M\} \subseteq \mathsf{Loc}(\mathsf{mclex}_\ast\{N\}) \Longleftrightarrow \mathsf{mclex}_\ast\{M\} \subseteq \mathsf{mclex}_\ast\{N\}$$
holds for any non-pointed matrix $M$ and any (pointed) matrix~$N$. Indeed, supposing $\mathsf{mclex}\{M\} \subseteq \mathsf{Loc}(\mathsf{mclex}_\ast\{N\})$ and given a finitely complete pointed category $\mathbb{C}$ with $M$-closed relations, we know that all fibres $\mathsf{Pt}_I(\mathbb{C})$ have $N$-closed relations. Taking $I$ to be a zero object of $\mathbb{C}$ gives a category which is isomorphic to $\mathbb{C}$ which thus has $N$-closed relations. The converse inclusion follows immediately from $\mathsf{mclex}\{M\} \subseteq \mathsf{Loc}(\mathsf{mclex}_\ast\{M\})$.
$$\xymatrix{\mathsf{Mclex} \ar@<-4pt>@/_/[rr]_-{(-)_\ast}="A" && \mathsf{Mclex}_\ast \ar@<-4pt>@/_/[ll]_-{\mathsf{Loc}}="B" \ar@{} "A";"B"|-{\top}}$$
In addition, this right adjoint is also `almost a left inverse' of $(-)_\ast$ in the sense that $\mathsf{mclex}\{M\}=\mathsf{Loc}(\mathsf{mclex}_\ast\{M\})$ holds for any non-pointed matrix $M$ with at least one left column. Therefore, if $M$ and $M'$ are two non-pointed matrices such that $\mathsf{mclex}_\ast\{M\} \subseteq \mathsf{mclex}_\ast\{M'\}$, then $\mathsf{mclex}\{M\} \subseteq \mathsf{Loc}(\mathsf{mclex}_\ast\{M'\})$ and thus either $\mathsf{mclex}\{M\}\subseteq\mathsf{mclex}\{M'\}$ or $\mathsf{mclex}\{M\}$ is the collection of all finitely complete preorders and $\mathsf{mclex}\{M'\}$ is the collection of all categories equivalent to~$\mathbf{1}$. This discussion leads us to the following theorem.

\begin{theorem}\label{theorem restriction is almost injective}
Let $S$ and $U$ be non-pointed matrix sets such that $U$ does not contain any matrix with no left columns. Then one has $\mathsf{mclex}_\ast S\subseteq \mathsf{mclex}_\ast U$ if and only if $\mathsf{mclex} S\subseteq \mathsf{mclex} U$.
\end{theorem}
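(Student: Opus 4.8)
The statement is a ``set version'' of the discussion immediately preceding it, which handles single matrices. The plan is to reduce the general matrix-set case to the single-matrix adjunction $\mathsf{mclex}\{M\} \subseteq \mathsf{Loc}(\mathsf{mclex}_\ast\{N\}) \Leftrightarrow \mathsf{mclex}_\ast\{M\} \subseteq \mathsf{mclex}_\ast\{N\}$ together with the ``almost left inverse'' fact $\mathsf{mclex}\{M\} = \mathsf{Loc}(\mathsf{mclex}_\ast\{M\})$ valid for non-pointed matrices with at least one left column. The forward implication is the genuinely new direction; the backward implication is essentially trivial, since a non-pointed category with $M$-closed relations for all $M \in S$ is in particular a pointed-or-not category, and restricting to pointed categories only shrinks the collections, so $\mathsf{mclex} S \subseteq \mathsf{mclex} U$ clearly gives $\mathsf{mclex}_\ast S \subseteq \mathsf{mclex}_\ast U$ by intersecting both sides with the collection of pointed categories (using $(-)_\ast$ order-preserving).

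For the forward implication, first I would observe that $\mathsf{mclex}_\ast S = \bigcap_{M \in S}\mathsf{mclex}_\ast\{M\}$ and likewise for $U$, and that membership in $\mathsf{mclex} U$ is equivalent to membership in $\mathsf{mclex}\{N\}$ for every $N \in U$. So it suffices to fix $N \in U$ and show $\mathsf{mclex} S \subseteq \mathsf{mclex}\{N\}$. Using the Bourn localization adjunction applied with the matrix set $S$ in place of a single matrix --- that is, $\mathsf{Loc}$ of an intersection is the intersection of the $\mathsf{Loc}$'s, and $\mathsf{mclex}\{M\} \subseteq \mathsf{Loc}(\mathsf{mclex}_\ast\{N\})$ for each $M$ --- one wants to conclude $\mathsf{mclex} S \subseteq \mathsf{Loc}(\mathsf{mclex}_\ast S) \subseteq \mathsf{Loc}(\mathsf{mclex}_\ast\{N\})$, where the second inclusion uses the hypothesis $\mathsf{mclex}_\ast S \subseteq \mathsf{mclex}_\ast U \subseteq \mathsf{mclex}_\ast\{N\}$ and monotonicity of $\mathsf{Loc}$. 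The remaining point is to upgrade $\mathsf{Loc}(\mathsf{mclex}_\ast\{N\})$ to $\mathsf{mclex}\{N\}$ itself: this is exactly where the hypothesis that $N$ (hence every matrix in $U$) has at least one left column is used, via Theorem~\ref{theorem localization} (the ``almost left inverse'' consequence), which gives $\mathsf{Loc}(\mathsf{mclex}_\ast\{N\}) = \mathsf{mclex}\{N\}$.

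The inclusion $\mathsf{mclex} S \subseteq \mathsf{Loc}(\mathsf{mclex}_\ast S)$ deserves a word: a finitely complete category $\mathbb{C}$ with $M$-closed relations for all $M \in S$ has, for each object $I$, a fibre $\mathsf{Pt}_I(\mathbb{C})$ which is finitely complete and pointed, and by Theorem~\ref{theorem localization} (or rather the displayed consequence $\mathsf{mclex}\{M\} \subseteq \mathsf{Loc}(\mathsf{mclex}_\ast\{M\})$ which holds for \emph{every} non-pointed $M$, with or without left columns) this fibre has $M$-closed relations; since this holds for all $M \in S$, the fibre lies in $\mathsf{mclex}_\ast S$, so $\mathbb{C} \in \mathsf{Loc}(\mathsf{mclex}_\ast S)$. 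Chaining the three inclusions $\mathsf{mclex} S \subseteq \mathsf{Loc}(\mathsf{mclex}_\ast S) \subseteq \mathsf{Loc}(\mathsf{mclex}_\ast U) \subseteq \mathsf{Loc}(\mathsf{mclex}_\ast\{N\}) = \mathsf{mclex}\{N\}$ and intersecting over all $N \in U$ finishes the forward direction.

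\textbf{Main obstacle.} I expect no deep obstacle --- the result is a formal consequence of the already-established adjunction together with the ``almost left inverse'' property --- but the one place to be careful is the role of the hypothesis on $U$: without it, a matrix $N \in U$ with no left columns would force $\mathsf{Loc}(\mathsf{mclex}_\ast\{N\}) = \mathsf{mclex}\{N_{\mathsf{loc}}\}$ to be the collection of finitely complete preorders rather than the collection $\mathsf{mclex}\{N\}$ of categories equivalent to $\mathbf{1}$, breaking the final equality. So the proof must invoke the left-column hypothesis precisely at the step $\mathsf{Loc}(\mathsf{mclex}_\ast\{N\}) = \mathsf{mclex}\{N\}$, and nowhere else; I would make that explicit. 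A secondary point of care is that $\mathsf{Loc}$ commutes with the relevant intersections only in the direction needed (it is order-preserving, and $\mathsf{Loc}(\bigcap_i \mathcal{C}_i) = \bigcap_i \mathsf{Loc}(\mathcal{C}_i)$ holds directly from the definition of Bourn localization via fibres), which should be stated but is routine.
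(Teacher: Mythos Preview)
Your proposal is correct and follows essentially the same route as the paper: the theorem is presented there as an immediate consequence of the preceding discussion establishing the adjunction $(-)_\ast \dashv \mathsf{Loc}$ and the ``almost left inverse'' identity $\mathsf{mclex}\{M\} = \mathsf{Loc}(\mathsf{mclex}_\ast\{M\})$ for non-pointed $M$ with at least one left column. Your write-up simply makes explicit the passage from single matrices to matrix sets (via intersections and the chain $\mathsf{mclex}\, S \subseteq \mathsf{Loc}(\mathsf{mclex}_\ast S) \subseteq \mathsf{Loc}(\mathsf{mclex}_\ast\{N\}) = \mathsf{mclex}\{N\}$), which the paper leaves to the reader with the phrase ``This discussion leads us to the following theorem.''
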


Let us immediately note that, if $S$ is the singleton consisting of the trivial matrix $\left[\begin{array}{c|c} x_2 & x_1\end{array}\right]\in\Mnp(1,1,2)$ and $U$ is the singleton consisting of the trivial matrix $\left[\begin{array}{c|c} \, & x_1\end{array}\right]\in\Mnp(1,0,1)$, one has that $\mathsf{mclex} U=\mathsf{mclex}_\ast S=\mathsf{mclex}_\ast U$ is the collection of all categories equivalent to~$\mathbf{1}$ while $\mathsf{mclex} S$ is the collection of all finitely complete preorders. Therefore, $\mathsf{mclex}_\ast S\subseteq \mathsf{mclex}_\ast U$ holds but $\mathsf{mclex} S\subseteq \mathsf{mclex} U$ does not. In other words, the function $(-)_\ast$ is not injective but is not far from being it, i.e., for two non-pointed matrices $M$ and~$M'$, one has
$$\mathsf{mclex}_\ast\{M\} = \mathsf{mclex}_\ast\{M'\} \Longleftrightarrow (\mathsf{mclex}\{M\}=\mathsf{mclex}\{M'\} \text{ or } (M\text{ and }M'\text{ are trivial matrices})).$$

We know from~\cite{HoefnagelJacqminJanelidze} that $\mathsf{Mclex}$ is infinite but countable. In view of the above properties of the function $(-)_\ast$, we can deduce that $\mathsf{Mclex}_\ast$ is also infinite and it is not hard to see that it is also countable. On the other hand, it is proved in~\cite{HoefnagelJacqminJanelidze} that $\mathsf{Mclex}$ is a meet semi-lattice with the meet operation being given by the intersection of the corresponding collections of categories. It seems the same argument cannot be transposed to the pointed context and we therefore leave as an open question whether finite conjunctions of matrix properties of finitely complete pointed categories are again matrix properties, and the question whether $\mathsf{Mclex}_\ast$ is a meet semi-lattice.

Given integers $n>0$ and $m,k\geqslant 0$, we have denoted in~\cite{HoefnagelJacqminJanelidze} by $\mathsf{Mclex}[n,m,k]$ the restriction of the poset $\mathsf{Mclex}$ to those $\mathsf{mclex}\{M\}$ given by non-pointed matrices $M\in\Mnp(n,m,k)$. Adapting this notation, we denote by $\mathsf{Mclex}_\ast[n,m,k]$ the restriction of $\mathsf{Mclex}_\ast$ to those matrix classes $\mathsf{mclex}_\ast\{M\}$ given by matrices $M\in\M(n,m,k)$. Obviously, each of these posets is finite. For very small values of the parameters $n,m,k$, in view of Theorems~\ref{ThmC} and~\ref{theorem anti-trivial}, we can have a complete description of these posets:
\begin{itemize}
\item $\mathsf{Mclex}_\ast[n,m,0]$ contains only the collection of all finitely complete pointed categories.
\item When $k>0$, $\mathsf{Mclex}_\ast[n,0,k]=\mathsf{Mclex}_\ast[n,1,k]=\mathsf{Mclex}_\ast[1,m,k]$ has exactly two matrix classes given by the collection of all finitely complete pointed categories and the collection of categories equivalent to~$\mathbf{1}$.
\end{itemize}

In view of this, we will call a matrix class (of finitely complete pointed categories) \emph{degenerate} if it is determined by a trivial matrix or an anti-trivial matrix. That is, the degenerate matrix classes are the following two:
\begin{itemize}
\item the matrix class given by trivial matrices, i.e., the collection of categories equivalent to~$\mathbf{1}$, or in other words, the bottom element of the poset $\mathsf{Mclex}_\ast$;
\item the matrix class given by anti-trivial matrices, i.e., the collection of all finitely complete pointed categories, or in other words, the top element of the poset $\mathsf{Mclex}_\ast$.
\end{itemize}

\begin{remark}\label{remark localization classes}
Since the left adjoint $(-)_\ast$ preserves the top element, we know that the collection of all finitely complete pointed categories is the only element of $\mathsf{Mclex}_\ast$ sent by $\mathsf{Loc}$ to the top element of $\mathsf{Mclex}$, i.e., the collection of all finitely complete categories. Similarly, in view of Theorem~\ref{ThmC}, it is not hard to see that given a non-trivial matrix~$N$, the non-pointed matrix $N_{\mathsf{loc}}$ is also non-trivial and thus the bottom element of $\mathsf{Mclex}_\ast$ is the only element being mapped by $\mathsf{Loc}$ to the collection of finitely complete preorders, i.e., the unique atom of $\mathsf{Mclex}$. Let us also notice that, given a non-pointed matrix $M$ with at least one left column, in view of the counit of the adjunction $(-)_\ast\dashv\mathsf{Loc}$, the matrix class $\mathsf{mclex}_\ast\{M\}$ is the smallest element of $\mathsf{Mclex}_\ast$ whose Bourn localization is $\mathsf{mclex}\{M\}$.
\end{remark}

\begin{remark}\label{remark restriction loc}
Given integers $n>0$ and $m,k\geqslant 0$, we know that the Bourn localization of a matrix class $\mathsf{mclex}_\ast\{N\}$ for a matrix $N\in\M(n,m,k)$ is given by $\mathsf{mclex}\{N_{\mathsf{loc}}\}$, where $N_{\mathsf{loc}}\in\Mnp(n,m+1,k+1)$. The function $\mathsf{Loc}$ therefore restricts as an order-preserving function
$$\mathsf{Loc}\colon\mathsf{Mclex}_\ast[n,m,k]\twoheadrightarrow\mathsf{Mclex}[n,m+1,k+1]$$
(denoted in the same way by abuse of notation). Moreover, in view of the construction of $N_{\mathsf{loc}}$ from~$N$, it is easy to see that this function is actually surjective.
\end{remark}

Let us notice that a matrix $M\in\M(n,m,k)$ with $n>0$ and $m,k\geqslant 0$ (i.e., a $n\times (m+1)$ matrix with entries in $\{\ast,x_1,\dots,x_k\}$) can have at most $(k+1)^n$ different left columns. One of these columns is the right column of~$M$, whose presence makes $M$ anti-trivial. Among those $(k+1)^n$ possibilities for the left columns of~$M$, there is also the column of~$\ast$'s. If this column is the right column, $M$ is again anti-trivial. If the column of $\ast$'s belongs to the left of~$M$, one can remove it without changing the corresponding matrix class. Therefore, one has the inclusion
$$\mathsf{Mclex}_\ast[n,m,k]\subseteq\mathsf{Mclex}_\ast[n,(k+1)^n-2,k]$$
for all $n,k>0$ and $m\geqslant 0$. Applying Remark~\ref{remark restriction loc} here, we know that $\mathsf{Loc}$ restricts to a surjective function
$$\mathsf{Loc}\colon\mathsf{Mclex}_\ast[n,(k+1)^n-2,k]\twoheadrightarrow\mathsf{Mclex}[n,(k+1)^n-1,k+1]$$ where, as shown in~\cite{HoefnagelJacqminJanelidze}, $\mathsf{Mclex}[n,(k+1)^n-1,k+1]$ contains all elements of $\mathsf{Mclex}$, different from the bottom element, and induced by a non-pointed matrix with $n$ rows and $k+1$ variables. Moreover, if a matrix $M\in\M(n,m,k)$ has the left entries of its $i$-th row consisting of $m$ different variables, then, for any $i'\in\{1,\dots,n\}$, the equivalence relation $R_{M_i} \vee R_{M_{i'}} \vee R^\ast_{M_{i,i'}}$ used in Theorem~\ref{ThmC} is nothing but $R_{M_{i'}}$. In view of this and of that theorem, it is easy to prove that such a matrix induces a degenerate matrix class $\mathsf{mclex}_\ast\{M\}$. This proves that
$$\mathsf{Mclex}_\ast[n,m,k]\subseteq\mathsf{Mclex}_\ast[n,m,m-1]$$
for all $n>0$, $m>1$ and $k\geqslant 0$. In addition, if $M$ is a non-trivial matrix in $\M(n,m,m-1)$, up to permutation of variables, we can assume that the right column is only made of $x_1$'s and~$\ast$'s. There are $m^m$ different rows with $x_1$ as right entry, to which we can subtract the $(m-1)^m$ rows which does not contain $x_1$ on the left (in view of Theorem~\ref{ThmC}). There are also $m^m$ different rows with $\ast$ as a right entry, to which we can remove the row of $\ast$'s which does not influence the corresponding matrix class (see for instance our algorithm in the next section). The matrix class induced by $M$ is thus the same as the matrix class determined by a matrix made of a maximum of $2m^m-(m-1)^m-1$ rows. This observation leads to the fact that the inclusion
$$\mathsf{Mclex}_\ast[n,m,k]\subseteq\mathsf{Mclex}_\ast[2m^m-(m-1)^m-1,m,m-1]$$
holds for all $n>0$, $m>1$ and $k\geqslant 0$.

\section{The algorithm}\label{section the algorithm}

Theorem~\ref{theorem restriction is almost injective} shows that, to decide whether $\mathsf{mclex}_\ast S\subseteq \mathsf{mclex}_\ast U$ for finite non-pointed matrix sets $S$ and~$U$, it is equivalent, in non-trivial cases, to use the algorithm of~\cite{HoefnagelJacqminJanelidze} which decide whether $\mathsf{mclex} S\subseteq \mathsf{mclex} U$. In this section, we present an analogue algorithm for the case where $S$ and $U$ are not necessarily non-pointed. In order to do so, adapting the ideas of~\cite{HoefnagelJacqminJanelidze}, themselves based on the ideas of~\cite{Hoefnagel2019}, let us now study $S$-injective objects in the category of $n$-ary pointed relations. For each non-zero natural number~$n$, we consider the category $\mathbf{Rel}_\ast^n$ whose objects are triples $(X,\ast,R)$ where $(X,\ast)$ is a pointed set and $R$ is an $n$-ary pointed relation on $X$ (i.e., a subset $R\subseteq X^n$ containing $(\ast,\dots,\ast)$), and whose morphisms $(X,\ast,R)\to (X',\ast',R')$ are relation-preserving pointed functions, i.e., functions $f\colon X\to X'$ such that $f(\ast)=\ast'$ and such that there exists a (necessarily unique) dashed morphism rendering the diagram
$$\xymatrix{R \ar@{}[d]|(.21){}="A" \ar@{^{(}->}@<-2pt>"A";[d] \ar@{-->}[r] & R' \ar@{}[d]|(.21){}="B" \ar@{^{(}->}@<-2pt>"B";[d] \\ X^n \ar[r]_-{f^n} & X'^{\,n}}$$
commutative. The forgetful functor $\mathbf{Rel}_\ast^n\to\mathbf{Set}_\ast$ is a topological functor (see e.g.~\cite{Borceux1994b}) and therefore $\mathbf{Rel}_\ast^n$ is a complete and cocomplete pointed category. Moreover, the limit/colimit of a diagram in $\mathbf{Rel}_\ast^n$ has as its underlying pointed set the limit/colimit of the underlying diagram in $\mathbf{Set}_\ast$, equipped with the largest/smallest pointed relation making the canonical projections/inclusions relation-preserving. It is then not difficult to see that, given a matrix $M\in\M$, an object $(X,\ast,R)$ of $\mathbf{Rel}_\ast^n$ is an $\{M\}$-injective object if and only if $M$ is functional in the pointed set $(X,\ast)$ and $R$ is $M$-sharp. This is because, if we consider the diagram in Theorem~\ref{theorem description injective objects} applied to $(X,\ast,R)$ in $\mathbf{Rel}_\ast^n$, we have that $M$ functional in the pointed set $(X,\ast)$ and $R$ is $M$-sharp if and only if the pointed function $p^X_M$ exists and is a morphism in $\mathbf{Rel}_\ast^n$, i.e., is relation-preserving. Furthermore, when $M$ is not a trivial matrix, we can drop `$M$ is functional in~$(X,\ast)$' by virtue of Theorem~\ref{ThmC}, i.e., $(X,\ast,R)$ is $\{M\}$-injective if and only if $R$ is $M$-sharp. We can also see that for a matrix set~$S$, the category $\mathsf{Inj}_S\mathbf{Rel}_\ast^n$ forms a full reflective subcategory of~$\mathbf{Rel}_\ast^n$. Given an object $(X,\ast,R)$ in $\mathbf{Rel}_\ast^n$, its reflection $f\colon (X,\ast,R)\to (X',\ast',R')$ in the subcategory $\mathsf{Inj}_S\mathbf{Rel}_\ast^n$ is obtained as follows:
\begin{itemize}
\item If $S$ contains a trivial matrix, then $(X',\ast',R')$ is the zero object of $\mathbf{Rel}_\ast^n$, i.e., $X'=\{\ast'\}$ and $R'=\{\ast'\}^n$, and the function $f$ is the unique function $f\colon X\to \{\ast'\}$.

\item If $S$ does not contain a trivial matrix, then $(X',\ast')=(X,\ast)$, the function $f$ is the identity function $f=1_X$ and $R'\subseteq X^n$ is the intersection of all relations containing $R$ as a subrelation and which are $M$-sharp for each matrix $M$ in~$S$.
\end{itemize}
Thus, a colimit of a small diagram in $\mathsf{Inj}_S\mathbf{Rel}_\ast^n$ can be obtained by applying the construction above to the colimit of the same diagram in $\mathbf{Rel}_\ast^n$. In particular, each $(\mathsf{Inj}_S\mathbf{Rel}_\ast^n)^\mathsf{op}$ is a finitely complete pointed category. These remarks together with Theorem~\ref{ThmB} bring us to the following result:

\begin{theorem}\label{ThmD}
Consider two matrix sets $S$ and~$U$. If for any non-zero natural number~$n$, every $n$-ary pointed relation $R$ on any pointed set $(X,\ast)$ that is $M$-sharp for every matrix $M$ in $S$ is $N$-closed for every matrix $N$ in $U$ having $n$ rows, then every finitely complete pointed category that has $M$-closed relations for every $M$ in $S$ also has $N$-closed relations for every $N$ in~$U$, i.e., $\mathsf{mclex}_\ast S\subseteq\mathsf{mclex}_\ast U$. The converse is also true when no matrix in $S$ is trivial.  
\end{theorem}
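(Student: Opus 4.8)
The plan is to prove the two implications separately: the forward one is elementary, while the converse rests on Theorem~\ref{ThmB} and the categories $\mathbf{Rel}_\ast^{n}$; in both I use that, for a pointed relation on a pointed set, being $N$-closed is equivalent to being $N$-closed over the two-element pointed set $\{\ast,a\}$, which is precisely the concrete combinatorial closedness statement. For the forward implication, let $\mathbb{C}$ be a finitely complete pointed category with $M$-closed relations for every $M\in S$, and fix $N\in U$ with $n$ rows. Given an internal $n$-ary relation $(r_i\colon R\to C)_i$ on an object $C$, an object $X$, and an interpretation $g$ of $N$ of type $\mathbb{C}(X,C)$ whose left columns lift through $R$, I would apply the finite-limit-preserving pointed functor $\mathbb{C}(X,-)\colon\mathbb{C}\to\mathbf{Set}_\ast$, which carries the jointly monic span $(r_i)$ to the $n$-ary pointed relation $\tilde R=\{(r_1a,\dots,r_na)\mid a\in\mathbb{C}(X,R)\}$ on the pointed set $\mathbb{C}(X,C)$. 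Since $\mathbb{C}$ has $M$-closed relations, $(r_i)$ is $M$-sharp for every $M\in S$, and unwinding the description of $M$-sharpness through compatibility with reductions of type $\mathbb{C}(X,C)^{n}$ shows directly that $\tilde R$ is $M$-sharp for every $M\in S$. The hypothesis then gives that $\tilde R$ is $N$-closed, hence $N$-closed over $\{\ast,a\}$; instantiating this concrete closedness at $g$ yields the morphism $v\colon X\to R$ making $(r_i)$ compatible with $g$. As $X$, the relation and $N$ were arbitrary, $\mathbb{C}$ has $N$-closed relations for all $N\in U$, i.e.\ $\mathbb{C}\in\mathsf{mclex}_\ast U$.

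\textbf{Necessity: the universal category.} Assume $\mathsf{mclex}_\ast S\subseteq\mathsf{mclex}_\ast U$ and that no matrix of $S$ is trivial. Fix $n$, a matrix $N\in U$ with $n$ rows, and an $n$-ary pointed relation $R$ on $(X,\ast)$ that is $M$-sharp for every $M\in S$. By Theorem~\ref{ThmC}, no matrix of $S$ being trivial forces each $M\in S$ to be functional in every pointed set, so (by the characterization of $\{M\}$-injective objects of $\mathbf{Rel}_\ast^{n}$ derived from Theorem~\ref{theorem description injective objects}) the object $(X,\ast,R)$ is $S$-injective. I would next check that $\mathbf{Rel}_\ast^{n}$ satisfies the hypotheses of Theorem~\ref{ThmB}: it is pointed with finite limits and colimits; every morphism factors as a surjection on underlying sets followed by the embedding of a subset carrying the induced relation (an equalizer); and, again because $S$ has no trivial matrix, each reflection $(Z,\ast,T)\to(Z,\ast,\overline T)$ into $\mathsf{Inj}_S\mathbf{Rel}_\ast^{n}$ is a monomorphism, so $\mathbf{Rel}_\ast^{n}$ has enough $S$-injective objects. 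Theorem~\ref{ThmB} with $T=S$ then yields that $\mathbb{D}:=(\mathsf{Inj}_S\mathbf{Rel}_\ast^{n})^{\mathsf{op}}$ has $M$-closed relations for every $M\in S$; being finitely complete and pointed, $\mathbb{D}\in\mathsf{mclex}_\ast S\subseteq\mathsf{mclex}_\ast U$, and hence $\mathbb{D}$ has $N$-closed relations.

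\textbf{Necessity: recovering closedness of $R$.} It remains to read ``$\mathbb{D}$ has $N$-closed relations'' back as ``$R$ is $N$-closed''. The plan is to display in $\mathbb{D}$ a concrete $n$-ary internal relation $\mathcal R$, tested over the object $(X,\ast,R)$, whose compatibility data reproduces the concrete closedness of $R$. I would let $V$ be the reflection into $\mathsf{Inj}_S\mathbf{Rel}_\ast^{n}$ of the object $(\{\ast,a\},\{(\ast,\dots,\ast)\})$ freely generated by one element, so that $\mathbb{D}\big((X,\ast,R),V\big)\cong X$; then the $n$-fold product $V^{n}$ in $\mathbb{D}$ is the reflection of the free object on generators $a_1,\dots,a_n$, and I would take $\mathcal R\rightarrowtail V^{n}$ to be the $\mathbb{D}$-subobject given by the identity-on-underlying-sets epimorphism onto the reflection of that free object enlarged by the single tuple $(a_1,\dots,a_n)$. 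Using that the preimage of an $M$-sharp pointed relation along a pointed function is again $M$-sharp — whence every pointed function out of such a reflection automatically preserves relations into the $S$-injective object $(X,\ast,R)$ — one computes that the image of $\mathbb{D}\big((X,\ast,R),\mathcal R\big)$ inside $\mathbb{D}\big((X,\ast,R),V\big)^{n}\cong X^{n}$ is exactly $R$. Therefore ``$\mathcal R$ is $N$-closed over $(X,\ast,R)$'', applied to the interpretation attached to an arbitrary pointed function $\{\ast,x_1,\dots,x_k\}\to X$, says precisely that $R$ is $N$-closed over $\{\ast,a\}$, i.e.\ that $R$ is $N$-closed, as wanted.

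\textbf{The main obstacle.} In the sufficiency part the only non-formal step is the passage from $M$-sharpness of $(r_i)$ in $\mathbb{C}$ to $M$-sharpness of $\tilde R$ in $\mathbf{Set}_\ast$, which is routine bookkeeping once one observes that compatibility for relations on sets may be tested over $\{\ast,a\}$. The substantial work is the last step of the necessity part: choosing $V$ and, above all, pinning down the correct subobject $\mathcal R\rightarrowtail V^{n}$ in the \emph{opposite} category $\mathbb{D}$ — where the product $V^{n}$ is a coproduct in $\mathsf{Inj}_S\mathbf{Rel}_\ast^{n}$, so the naive subobject ``$R\rightarrowtail X^{n}$'' is unavailable — and then verifying the two inclusions that identify the image of $\mathbb{D}\big((X,\ast,R),\mathcal R\big)$ in $X^{n}$ with $R$. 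Both inclusions hinge on the stability of $M$-sharpness under preimages, which is the technical heart of the argument.
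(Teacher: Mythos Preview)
Your forward direction is essentially the paper's: pass to the pointed relation $\tilde R\subseteq\mathbb{C}(X,C)^n$, check it is $M$-sharp for every $M\in S$, and apply the hypothesis. Your remark that closedness of a pointed relation in $\mathbf{Set}_\ast$ can be tested over $\{\ast,a\}$ is correct and is the bridge the paper uses implicitly.

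For the converse, your argument is correct but follows a genuinely different route. The paper applies Theorem~\ref{ThmB} \emph{twice}, with the auxiliary matrix set $T$ equal to the set of all non-trivial matrices: once to identify $\mathsf{Inj}_U\mathbf{Rel}_\ast^{n}$ as the \emph{largest} full subcategory with the relevant properties, and once to see that $\mathsf{Inj}_S\mathbf{Rel}_\ast^{n}$ also has those properties; the maximality clause then gives the inclusion $\mathsf{Inj}_S\mathbf{Rel}_\ast^{n}\subseteq\mathsf{Inj}_U\mathbf{Rel}_\ast^{n}$ outright, so that $R$ is in fact $N$-\emph{sharp} (a stronger conclusion than $N$-closed). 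The key device is that $(X,\ast,X^n)$ is $T$-injective for every pointed set, providing enough $T$-injectives uniformly. You instead apply Theorem~\ref{ThmB} only once, with $T=S$, to place $\mathbb{D}=(\mathsf{Inj}_S\mathbf{Rel}_\ast^{n})^{\mathsf{op}}$ in $\mathsf{mclex}_\ast S\subseteq\mathsf{mclex}_\ast U$, and then run a representability argument: your object $V$ represents the underlying-set functor, and your subobject $\mathcal R\rightarrowtail V^n$ is chosen so that $\mathbb{D}((X,\ast,R),\mathcal R)\cong R$ inside $X^n$, whence $N$-closedness of $\mathcal R$ over $(X,\ast,R)$ reads off $N$-closedness of $R$. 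This is valid --- the identification really does follow from the reflection adjunction (equivalently, from stability of $M$-sharpness under preimages), and the identity-on-underlying-sets map is an epimorphism in the reflective subcategory because surjections are epimorphisms in $\mathbf{Rel}_\ast^{n}$ and full subcategories reflect that. What the paper's approach buys is brevity and the sharper conclusion; what yours buys is an explicit witness inside $\mathbb{D}$, which is essentially a direct proof of Theorem~\ref{ThmF} rather than a deduction from the comparison of injective classes.
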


\begin{proof}
Assume first that for any integer $n>0$, every $n$-ary pointed relation $R$ on any pointed set $(X,\ast)$ that is $M$-sharp for every matrix $M$ in $S$ is $N$-closed for every matrix $N$ in $U$ with $n$ rows. Consider a finitely complete pointed category $\mathbb{C}$ that has $M$-closed relations for every $M$ in~$S$. Then every internal relation in $\mathbb{C}$ is $M$-sharp for each $M\in S$. Considering a matrix $N\in U$ with $n$ rows, an $n$-ary internal relation $r\colon R \rightarrowtail C^n$ on an object $C$ in~$\mathbb{C}$ and an arbitrary object~$Y$, we need to prove that $r$ is $N$-closed over~$Y$. These induce an $n$-ary relation $R'$ on the pointed set $\mathbb{C}(Y,C)$ for which $g_1,\dots,g_n\colon Y\to C$ are related if and only if there exists a (necessarily unique) morphism $v\colon Y \to R$ such that $r_iv=g_i$ for each $i\in\{1,\dots,n\}$. It is then easy to see that $R'$ is an (ordinary) $n$-ary pointed relation which is $M$-sharp for each $M\in S$. Our assumption gives then that $R'$ is $N$-closed, which implies that the internal relation $r$ on $C$ is $N$-closed over~$Y$.

Now assume that every finitely complete pointed category that has $M$-closed relations for every $M$ in $S$ also has $N$-closed relations for every $N$ in~$U$. Suppose also that no matrix in $S$ is trivial. Then no matrix in $U$ can be trivial as well since, by Theorem~\ref{ThmC}, $\mathbf{Set}_\ast^{\mathsf{op}}$ belongs to $\mathsf{mclex}_\ast S$ and thus to $\mathsf{mclex}_\ast U$. For each positive~$n$, consider the full subcategory $\mathsf{Inj}_U\mathbf{Rel}_\ast^n$ of $\mathbf{Rel}_\ast^n$ consisting of $U$-injective objects. We will now apply Theorem~\ref{ThmB} to the category $\mathbb{C}=\mathbf{Rel}_\ast^n$. Let $T$ be the set of all non-trivial matrices. For any pointed set~$(X,\ast)$, the relation $X^n$ is clearly $M$-sharp for every matrix~$M$. So the object $(X,\ast,X^n)$ is a $T$-injective object in $\mathbf{Rel}_\ast^n$. For any object $(X,\ast,R)$ in~$\mathbf{Rel}_\ast^n$, there is a monomorphism into such object --- namely, the inclusion $(X,\ast,R)\rightarrowtail (X,\ast,X^n)$. By Theorem~\ref{ThmB} we then get that $\mathsf{Inj}_U\mathbf{Rel}_\ast^n$ is the largest full subcategory of $\mathbf{Rel}_\ast^n$ having the following properties: it contains all $T$-injective objects, it is closed under finite limits and its dual category has $N$-closed relations for every $N$ in~$U$. By Theorem~\ref{ThmB} again, the subcategory $\mathsf{Inj}_S\mathbf{Rel}_\ast^n$ of $\mathbf{Rel}_\ast^n$ consisting of all $S$-injective objects has similar properties: it contains all $T$-injective objects, it is closed under finite limits and its dual category has $M$-closed relations for every $M$ in~$S$. The assumption that every finitely complete pointed category having $M$-closed relations for every $M$ in $S$ has $N$-closed relations for every $N$ in $U$ gives that $\mathsf{Inj}_S\mathbf{Rel}_\ast^n$ must be a subcategory of $\mathsf{Inj}_U\mathbf{Rel}_\ast^n$ (note that, from the paragraph before the theorem, we know that the duals of these categories are finitely complete pointed). So, for any non-zero natural number~$n$, every $n$-ary pointed relation $R$ on any pointed set $(X,\ast)$ that is $M$-sharp for every matrix $M$ in $S$ is also $N$-sharp for every matrix $N$ in~$U$. In particular, this means that every such relation is $N$-closed for every matrix $N$ in $U$ having $n$ rows.
\end{proof}

From the proof of Theorem~\ref{ThmD} we can extract the following theorem, which says that in order to prove an implication between matrix properties (in the finitely complete pointed context), one only needs to produce a proof for a single particular category.

\begin{theorem}\label{ThmF}
Let $S$ be a matrix set and let $N\in\M$ be a matrix with $n$ rows. The following statements are equivalent:
\begin{enumerate}[label=(\arabic*)]
\item $\mathsf{mclex}_\ast S\subseteq\mathsf{mclex}_\ast\{N\}$.
\item The category $(\mathsf{Inj}_S\mathbf{Rel}_\ast^n)^\mathsf{op}$, which belongs to $\mathsf{mclex}_\ast S$, also belongs to $\mathsf{mclex}_\ast\{N\}$.
\end{enumerate}
\end{theorem}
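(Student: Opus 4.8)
The plan is to obtain Theorem~\ref{ThmF} as a repackaging of the ingredients already assembled for Theorem~\ref{ThmD}, with $\mathbf{Rel}_\ast^n$ serving as a single ``universal test category''. I would first dispatch the degenerate case in which $S$ contains a trivial matrix: then $\mathsf{mclex}_\ast S$ is the collection of categories equivalent to $\mathbf{1}$, so (1) holds automatically, and the description of reflections into $\mathsf{Inj}_S\mathbf{Rel}_\ast^n$ given just before Theorem~\ref{ThmD} forces $\mathsf{Inj}_S\mathbf{Rel}_\ast^n$ to reduce to the zero object, so $(\mathsf{Inj}_S\mathbf{Rel}_\ast^n)^\mathsf{op}$ is equivalent to $\mathbf{1}$ and (2) holds too; both conditions being true, there is nothing more to prove. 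Hence I may assume that no matrix in $S$ is trivial, so that $S$ is contained in the set $T$ of all non-trivial matrices.

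Next I would justify the parenthetical assertion in~(2), namely that $(\mathsf{Inj}_S\mathbf{Rel}_\ast^n)^\mathsf{op}$ really lies in $\mathsf{mclex}_\ast S$. This is precisely the application of Theorem~\ref{ThmB} performed inside the proof of Theorem~\ref{ThmD}: the category $\mathbf{Rel}_\ast^n$ is pointed with finite limits and finite colimits and has epi--equalizer factorizations (inherited through the topological forgetful functor to $\mathbf{Set}_\ast$), it has enough $T$-injective objects because each $(X,\ast,R)$ embeds via a monomorphism into the $T$-injective object $(X,\ast,X^n)$, and $S\subseteq T$; so Theorem~\ref{ThmB} identifies $\mathsf{Inj}_S\mathbf{Rel}_\ast^n$ as the largest full subcategory of $\mathbf{Rel}_\ast^n$ that contains all $T$-injectives, is closed under finite limits, and whose dual has $M$-closed relations for every $M\in S$. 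In particular its dual has $M$-closed relations for all $M\in S$ and is finitely complete pointed, as noted before Theorem~\ref{ThmD}, so $(\mathsf{Inj}_S\mathbf{Rel}_\ast^n)^\mathsf{op}\in\mathsf{mclex}_\ast S$. With this, the implication (1)$\Rightarrow$(2) is immediate.

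For (2)$\Rightarrow$(1), I would feed statement~(2) back into Theorem~\ref{ThmB}, now applied to the singleton matrix set $\{N\}$. By~(2) the dual of $\mathsf{Inj}_S\mathbf{Rel}_\ast^n$ has $N$-closed relations, so $\mathsf{Inj}_S\mathbf{Rel}_\ast^n$ is a full subcategory of $\mathbf{Rel}_\ast^n$ containing all $T$-injectives, closed under finite limits, and whose dual has $N$-closed relations; since $\mathsf{Inj}_{\{N\}}\mathbf{Rel}_\ast^n$ is by Theorem~\ref{ThmB} the largest such, we get $\mathsf{Inj}_S\mathbf{Rel}_\ast^n\subseteq\mathsf{Inj}_{\{N\}}\mathbf{Rel}_\ast^n$. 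Unwinding this: if an $n$-ary pointed relation $R$ on a pointed set $(X,\ast)$ is $M$-sharp for every $M\in S$, then $(X,\ast,R)$ is an $S$-injective object of $\mathbf{Rel}_\ast^n$ (functionality of the matrices in $S$ being automatic by Theorem~\ref{ThmC}, since they are non-trivial), hence an $\{N\}$-injective object, hence $R$ is $N$-sharp and therefore $N$-closed. For an arity $n'\neq n$ the corresponding clause in the hypothesis of Theorem~\ref{ThmD} is vacuous, since $N$ has exactly $n$ rows. Thus the hypothesis of the forward implication of Theorem~\ref{ThmD} holds with $U=\{N\}$, yielding $\mathsf{mclex}_\ast S\subseteq\mathsf{mclex}_\ast\{N\}$, which is~(1).

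I do not expect a genuine obstacle, since the proof is a reorganization of material already present in the proof of Theorem~\ref{ThmD}; the one point demanding care is to run the argument for (2)$\Rightarrow$(1) using only the \emph{forward} implication of Theorem~\ref{ThmD} together with Theorem~\ref{ThmB}, and never its converse implication --- the latter would presuppose the very statement~(1) being proved. Keeping that separation clean, and correctly matching the three maximality conditions of Theorem~\ref{ThmB} for $\{N\}$ against those satisfied by $\mathsf{Inj}_S\mathbf{Rel}_\ast^n$ under hypothesis~(2), is essentially all the proof requires.
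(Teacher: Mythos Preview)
Your argument is correct and is exactly the ``extraction from the proof of Theorem~\ref{ThmD}'' that the paper itself indicates, together with the same parenthetical handling of the trivial case. The one small point you should make explicit is that your invocation of Theorem~\ref{ThmB} for the singleton $\{N\}$ in the step (2)$\Rightarrow$(1) requires $\{N\}\subseteq T$, i.e.\ that $N$ is non-trivial; this follows because, with $S$ containing no trivial matrix, $\mathsf{Inj}_S\mathbf{Rel}_\ast^n$ contains the objects $(X,\ast,X^n)$ for every pointed set $(X,\ast)$, so $(\mathsf{Inj}_S\mathbf{Rel}_\ast^n)^\mathsf{op}$ is not equivalent to~$\mathbf{1}$ and hypothesis~(2) then forces $N$ to be non-trivial (in the paper's proof of Theorem~\ref{ThmD} this same check is done, but from hypothesis~(1) instead).
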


(Notice that in the case where $S$ contains a trivial matrix, both statements are true and the result thus also holds in that case.)

We now come to the following question. Given a matrix set $S$ and a matrix $N\in\M(n,m,k)$, how to decide whether every $n$-ary pointed relation $R$ on any pointed set $(X,\ast)$ that is $M$-sharp for every matrix $M$ in $S$ is $N$-closed? Let us denote by $\mathsf{col}_\ast(N)$ the $n$-ary pointed relation on $\{\ast,x_1,\dots,x_k\}$ containing exactly the left columns of $N$ and $(\ast,\dots,\ast)$. Let us denote by $\mathsf{col}_\ast^S(N)$ the intersection of all $n$-ary relations on $\{\ast,x_1,\dots,x_k\}$ that contain $\mathsf{col}_\ast(N)$ as a subrelation and that are $M$-sharp for all $M$ in~$S$. It is certainly pointed and $M$-sharp for all $M$ in~$S$. Therefore, if every $n$-ary pointed relation on any pointed set $(X,\ast)$ that is $M$-sharp for every matrix $M$ in $S$ is $N$-closed, then the right column $N_\mathsf{r}$ of $N$ belongs to $\mathsf{col}_\ast^S(N)$. The converse is also true: If $N_\mathsf{r}\in \mathsf{col}_\ast^S(N)$, then every $n$-ary pointed relation $R$ on any pointed set $(X,\ast)$ that is $M$-sharp for every matrix $M$ in $S$ is $N$-closed. To see this, consider an interpretation 
$$\left[\begin{array}{ccc|c} b_{11} & \dots & b_{1m} & b_{1\,m+1}\\ \vdots & & \vdots & \vdots\\ b_{n1} & \dots & b_{nm} & b_{n\,m+1} \end{array}\right]$$
of $N$ of type $(X,\ast)$ such that every left column of the interpretation belongs to~$R$. Let this interpretation be given by a pointed map $f\colon \{\ast,x_1,\dots,x_k\}\to X$. Then the inverse image $f^{-1}(R)$ of $R$ along $f$ will be a pointed relation on $\{\ast,x_1,\dots,x_k\}$ that is $M$-sharp for every $M$ in~$S$. Indeed, every reduction of each $M$ of type $(\{\ast,x_1,\dots,x_k\},\dots,\{\ast,x_1,\dots,x_k\})$ whose left columns belong to $f^{-1}(R)$ will have a further reduction by applying $f$ to its entries, whose left columns belong to~$R$. Then, since $R$ is $M$-sharp, $f$ of the right column of the reduction will be in~$R$, which means the right column of the reduction will be in $f^{-1}(R)$. Now, since $f^{-1}(R)$ is $M$-sharp for every $M$ in $S$ and since it contains $\mathsf{col}_\ast(N)$, it must also contain $\mathsf{col}_\ast^S(N)$. The fact that $N_\mathsf{r}\in \mathsf{col}_\ast^S(N)$ will now give that the right column of the above interpretation of $N$ belongs to~$R$. So Theorem~\ref{ThmD} has the following consequence.

\begin{corollary}\label{CorA}  
Consider two matrix sets $S$ and~$U$. If $N_\mathsf{r}\in\mathsf{col}_\ast^S(N)$ for every matrix $N$ in~$U$, then $\mathsf{mclex}_\ast S\subseteq\mathsf{mclex}_\ast U$. The converse is also true when no matrix in $S$ is trivial.  
\end{corollary}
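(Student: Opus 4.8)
The plan is to obtain Corollary~\ref{CorA} as a direct repackaging of Theorem~\ref{ThmD}, using the equivalence established in the paragraph that precedes the corollary: for a fixed matrix $N\in\M(n,m,k)$, one has $N_\mathsf{r}\in\mathsf{col}_\ast^S(N)$ if and only if every $n$-ary pointed relation on any pointed set that is $M$-sharp for every $M\in S$ is $N$-closed. Granting that equivalence, the corollary is essentially a translation of Theorem~\ref{ThmD} along it, so the proof should be short.

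For the forward implication I would assume $N_\mathsf{r}\in\mathsf{col}_\ast^S(N)$ for every $N\in U$ and check that the hypothesis of Theorem~\ref{ThmD} is met. Fixing such an $N$ with $n$ rows and an $M$-sharp (for all $M\in S$) pointed relation $R\subseteq X^n$ whose members realize the left columns of some interpretation of $N$ of type $(X,\ast)$, encoded by a pointed map $f\colon\{\ast,x_1,\dots,x_k\}\to X$, I would pass to the preimage $f^{-1}(R)$. The point is that $f^{-1}(R)$ is again $M$-sharp for every $M\in S$: any reduction of $M$ with left columns in $f^{-1}(R)$ maps along $f$ to a reduction with left columns in $R$, whose right column lies in $R$ by $M$-sharpness, hence pulls back into $f^{-1}(R)$. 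Since $f^{-1}(R)$ contains $\mathsf{col}_\ast(N)$, it contains $\mathsf{col}_\ast^S(N)$, and therefore $N_\mathsf{r}$; so the right column of the interpretation lies in $R$, i.e. $R$ is $N$-closed. The forward half of Theorem~\ref{ThmD} then yields $\mathsf{mclex}_\ast S\subseteq\mathsf{mclex}_\ast U$.

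For the converse I would assume that no matrix in $S$ is trivial and that $\mathsf{mclex}_\ast S\subseteq\mathsf{mclex}_\ast U$, and invoke the converse half of Theorem~\ref{ThmD}: for every $n>0$, every $n$-ary pointed relation that is $M$-sharp for all $M\in S$ is $N$-closed for every $N\in U$ having $n$ rows. I would then apply this to the single relation $\mathsf{col}_\ast^S(N)$ on the pointed set $\{\ast,x_1,\dots,x_k\}$, which is $M$-sharp for all $M\in S$ by its very construction and contains all the left columns of $N$. Taking the identity interpretation of $N$ (that is, $f(x_i)=x_i$), $N$-closedness of $\mathsf{col}_\ast^S(N)$ forces $N_\mathsf{r}\in\mathsf{col}_\ast^S(N)$, as required.

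I do not expect a genuine obstacle here, since all the substantive work has already been done in the discussion preceding the corollary and in Theorem~\ref{ThmD}. The only points that deserve explicit care are the two facts used above — that $f^{-1}(R)$ inherits $M$-sharpness (which needs the precise correspondence between reductions of $M$ with entries in $f^{-1}(R)$ and reductions with entries in $R$) and that $\mathsf{col}_\ast^S(N)$ is well-defined, pointed, and $M$-sharp for every $M\in S$ — but both are already recorded in the surrounding text, so I would simply cite them rather than reprove them.
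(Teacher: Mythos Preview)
Your proposal is correct and matches the paper's approach exactly: the paper states the corollary as an immediate consequence of Theorem~\ref{ThmD} via the equivalence (proved in the preceding paragraph) between $N_\mathsf{r}\in\mathsf{col}_\ast^S(N)$ and the condition that every $M$-sharp pointed relation is $N$-closed. You have simply spelled out in more detail what the paper compresses into the phrase ``So Theorem~\ref{ThmD} has the following consequence.''
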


Note that $\mathsf{col}_\ast^S(N)$ is necessarily finite (it is a subset of $\{\ast,x_1,\dots,x_k\}^n$, when $N\in\M(n,m,k)$). When $S$ is finite, we can build $\mathsf{col}_\ast^S(N)$ in finitely many steps as follows. For a given pointed relation $R\subseteq \{\ast,x_1,\dots,x_k\}^n$, we consider the pointed relation $S(R)\subseteq \{\ast,x_1,\dots,x_k\}^n$ containing exactly $R$ and the right columns of each row-wise interpretation $B$ of type $(\{\ast,x_1,\dots,x_k\},\dots,\{\ast,x_1,\dots,x_k\})$ of each matrix $M'\in\M(n,m',k')$ whose rows are rows of a common matrix $M$ in $S$ such that the left columns of $B$ belong to~$R$. The number of all possible such $B$'s is finite (as $S$ is finite), so $S(R)$ can be built from $R$ in finitely many steps. It is easy to see that if $R\subseteq \mathsf{col}_\ast^S(N)$ then also $S(R)\subseteq \mathsf{col}_\ast^S(N)$. Now, starting with $R=\mathsf{col}_\ast(N)$, we can build a chain of proper subset inclusions
$$\mathsf{col}_\ast(N)\subset S(\mathsf{col}_\ast(N))\subset SS(\mathsf{col_\ast}(N))\subset\dots\subset SS\dots S(\mathsf{col_\ast}(N))$$
until $S(R)$ for the last set $R$ in the chain is equal to~$R$. When this happens, $R$ will be $M$-sharp for every $M$ in~$S$, and hence $R=\mathsf{col}_\ast^S(N)$. The chain does terminate after some finitely many steps because each of the sets in the chain are subsets of the finite set $\{\ast,x_1,\dots,x_k\}^n$.

From the results above we can readily extract an algorithm for deciding $\mathsf{mclex}_\ast S\subseteq \mathsf{mclex}_\ast U$, when $S$ and $U$ are finite matrix sets. Note that the results did not give full characterization of $\mathsf{mclex}_\ast S\subseteq \mathsf{mclex}_\ast U$ when $S$ contains a trivial matrix, however, when $S$ contains a trivial matrix, one always has $\mathsf{mclex}_\ast S\subseteq \mathsf{mclex}_\ast U$.

The combined algorithm which deals with both the trivial and non-trivial matrices is then the following. To decide whether $\mathsf{mclex}_\ast S\subseteq \mathsf{mclex}_\ast U$ where $S$ and $U$ are finite matrix sets, do the following:
\begin{description}
	\item[Step 1.] If $S$ contains a trivial matrix (i.e., if $S$ contains a matrix which does not satisfies the three conditions in~\ref{ThmC description} of Theorem~\ref{ThmC}), then terminate the process with positive decision for $\mathsf{mclex}_\ast S\subseteq \mathsf{mclex}_\ast U$.

    \item[Step 2.] For each matrix $N\in\M(n,m,k)$ in $U$ do the following. First expand the left part of $N$ with a column of $\ast$'s. Then, keep expanding the left part of~$N$, until it is impossible to expand it further, with right columns of those row-wise interpretations $B$ of type $(\{\ast,x_1,\dots,x_k\},\dots,\{\ast,x_1,\dots,x_k\})$ of each matrix $M'\in\M(n,m',k')$ whose rows are rows of a common matrix $M$ in $S$ such that the left columns of $B$, but not its right column, can be found in the left part of~$N$. If the expanded left part of $N$ does not contain the right column of~$N$, then terminate the process with negative decision for $\mathsf{mclex}_\ast S\subseteq \mathsf{mclex}_\ast U$.

    \item[Step 3.] Reaching this step means that the process has not been terminated in the previous steps. Then the process completes with positive decision for $\mathsf{mclex}_\ast S\subseteq \mathsf{mclex}_\ast U$.
\end{description}

We conclude this section by recalling from~\cite{HoefnagelJacqminJanelidze} (and Theorem~\ref{theorem restriction is almost injective}) that, given two matrices $M$ and~$N$, the above algorithm \emph{cannot} be used to prove that any pointed variety of universal algebras with $M$-closed relations has $N$-closed relations. In general, this statement is weaker than the statement $\mathsf{mclex}_\ast\{M\}\subseteq \mathsf{mclex}_\ast\{N\}$. Actually, the statement $\mathsf{mclex}_\ast\{M\}\subseteq \mathsf{mclex}_\ast\{N\}$ is, in general, even stronger than the statement that any regular~\cite{BGV1971} well-powered pointed category with $M$-closed relations has $N$-closed relations. However, it is shown in~\cite{HJ} that if $M$ is non-pointed and $N$ is the matrix from Example~\ref{example matrices} determining Mal'tsev categories, this distinction disappears. We refer the reader to~\cite{HJJW} for a link of this case with the matrix class of majority categories.

\section{Computer-assisted results}\label{section computer-assisted results}

A computer implementation of our algorithm allows us to get a complete description of the posets $\mathsf{Mclex}_\ast[n,m,k]$ for relatively small values of $n,m,k$. We present those results in this section.

Before displaying these results, we describe how we chose to represent the matrix classes in the display. For each matrix class $\mathcal{C}\in\mathsf{Mclex}_\ast$, and for each choice of integers $n>0$ and $m,k\geqslant 0$, consider the set $\mathcal{C}_{n,m,k}$ of all matrices $M$ such that $\mathcal{C}=\mathsf{mclex}_\ast\{M\}$ and $M\in \M(n',m',k')$ for some $n'\leqslant n$, $m'\leqslant m$ and $k'\leqslant k$. Now consider the subset $\mathcal{C}^{\mathsf{R}}_{n,m,k}$ of $\mathcal{C}_{n,m,k}$ consisting of those matrices that have minimal number of rows; the subset $\mathcal{C}^{\mathsf{RC}}_{n,m,k}$ of $\mathcal{C}^{\mathsf{R}}_{n,m,k}$ consisting of those matrices that have minimal number of columns and the subset $\mathcal{C}^{\mathsf{RCV}}_{n,m,k}$ of $\mathcal{C}^{\mathsf{RC}}_{n,m,k}$ consisting of those matrices whose entries lie in $\{\ast,x_1,\dots,x_v\}$ for the smallest possible~$v$. We then view each matrix in $\mathcal{C}^{\mathsf{RCV}}_{n,m,k}$ as a sequence of elements from $\{x_1,\dots,x_k,\ast\}$ by juxtaposing the transpose of each column next to each other, starting with the right column and continuing with the left columns from left to right. We order these matrices using the lexicographical ordering coming from this way of displaying matrices and from the order $x_1<\dots<x_k<\ast$. A matrix will be called an \emph{$(n,m,k)$-canonical} matrix if it is the smallest element of $\mathcal{C}^{\mathsf{RCV}}_{n,m,k}$ under this ordering for some $\mathcal{C}\in\mathsf{Mclex}_\ast$. It is easy to see that any matrix $M\in\M(n',m',k')$ which is $(n,m,k)$-canonical for some $n\geqslant n'$, $m \geqslant m'$ and $k \geqslant k'$ is also $(n',m',k')$-canonical. In this section, we will however see that, for the matrix class of strongly unital categories (see Example~\ref{example matrices}), the $(2,3,2)$-canonical matrix is not the same as the $(3,3,1)$-canonical matrix. In the following lemma, we again consider that the set of possible entries of a matrix is ordered via $x_1<\dots<x_k<\ast$.

\begin{lemma}
Let $n>0$ and $m,k\geqslant 0$ be integers and $M$ an $(n,m,k)$-canonical matrix. The following properties hold for~$M$:
\begin{itemize}
\item There exist integers $a,b\geqslant 0$ such that the right column of $M$ consists of $a$ many $x_1$'s followed by $b$ many~$\ast$'s.
\item $M$ has no duplicate rows or left columns.
\item Each left column of $M$ contains at least one variable (i.e., an entry different from~$\ast$).
\item The left columns of $M$ are lexicographically ordered.
\item The first $a$ rows of $M$ are lexicographically ordered.
\item The last $b$ rows of $M$ are lexicographically ordered.
\item For each of the first $a$ rows of $M$ and for each $1<i<j\leqslant k$, if $x_j$ appears in the left part of the row, then so does $x_i$ and the first occurrence of $x_j$ appears after the first occurrence of~$x_i$.
\item For each of the last $b$ rows of $M$ and for each $1\leqslant i<j\leqslant k$, if $x_j$ appears in the left part of the row, then so does $x_i$ and the first occurrence of $x_j$ appears after the first occurrence of~$x_i$.
\end{itemize}
\end{lemma}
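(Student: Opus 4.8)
The plan is to read every clause of the lemma as a ``minimality forces a normal form'' statement and prove each by contradiction, the only external inputs being the invariances of matrix properties recalled in Section~\ref{section preliminaries}: a matrix class $\mathsf{mclex}_\ast\{M\}$ is left unchanged by permuting rows, permuting left columns, deleting a repeated row, deleting a repeated left column, deleting a left column of $\ast$'s, relabelling all the variables, and applying to a single row a permutation of the variables. First I would isolate the bookkeeping principle common to all cases. Suppose $M'$ is obtained from the canonical matrix $M$ by one of these moves. If $M'$ has strictly fewer rows than $M$, then $M\notin\mathcal{C}^{\mathsf R}_{n,m,k}$; if $M'$ has the same number of rows but fewer left columns, then $M\notin\mathcal{C}^{\mathsf{RC}}_{n,m,k}$; if $M'$ has the same rows and columns but uses fewer distinct variables, then, after a global relabelling, $M\notin\mathcal{C}^{\mathsf{RCV}}_{n,m,k}$; in each of these three cases $M$ would not be canonical. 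Otherwise $M'$ lies in the same $\M(n',m',k')$ as $M$, hence in $\mathcal{C}^{\mathsf{RCV}}_{n,m,k}$, so canonicality of $M$ forces $M'$ to be lexicographically greater than or equal to $M$. Thus in each clause it is enough to produce a move whose output either has strictly smaller parameters or is strictly lexicographically smaller.

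With this in place the clauses come out essentially in the stated order. The ``no repeated rows'', ``no repeated left columns'' and ``every left column has a variable'' clauses are immediate: the offending row or column could simply be deleted, strictly lowering the number of rows or columns. For the shape of the right column, note that a variable appearing as a right entry can be renamed to $x_1$ within its row, after which the rows may be permuted; this yields a class-equivalent matrix of the same parameters whose right column is a block of $x_1$'s followed by a block of $\ast$'s, and since the right column is the first thing read off in the linearization, $x_1$ is the least symbol and $\ast$ the greatest, this is lexicographically forced; I then name the two block lengths $a$ and $b$. For the sorted left columns, permuting left columns is class-preserving and leaves the right column untouched, and the portion of the linearization listing the transposed left columns one after another is lexicographically least exactly when the columns are sorted. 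The two ``sorted rows'' clauses are the same argument for rows: the row permutations that preserve the right column $x_1^a\ast^b$ just established are precisely those preserving the first $a$ rows and the last $b$ rows as blocks, and since the left part of the linearization is read column by column, minimizing it over such permutations coincides with sorting the first $a$ rows lexicographically and, independently, the last $b$ rows. Finally the two ``first occurrence'' clauses are per-row normalizations: within one row we may permute the variables it contains — keeping $x_1$ fixed when the row is one of the first $a$, so as not to undo the previous step — and minimality of that row's contribution forces the variables still present to be named, in order of first appearance from the left, as $x_2,x_3,\dots$ for a first-block row and $x_1,x_2,\dots$ for a last-block row. At every per-row relabelling one also checks that two distinct variables of the whole matrix are not identified (which would lower the variable count and already contradict canonicality), so no generality is lost.

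The delicate point, which I expect to be the main obstacle, is the compatibility of the column-by-column lexicographic order on the linearization with these row-wise and entry-wise normalizations: one has to verify that minimizing the concatenation (right column, then first left column, then second left column, and so on, each written top to bottom) over the relevant group of moves genuinely amounts to first sorting the left columns, then sorting the rows within each of the two blocks, then putting each row into order-of-first-appearance form, and in particular that the moves used to force a later clause — block-preserving row permutations, and $x_1$-fixing per-row relabellings — are exactly those that leave intact the normal form already obtained for the earlier clauses. This is a finite combinatorial check, but it is where the argument actually lives; everything else is a bookkeeping application of the invariances recalled in Section~\ref{section preliminaries} together with the successive minimality requirements built into the definition of an $(n,m,k)$-canonical matrix.
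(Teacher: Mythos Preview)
Your proposal is correct and matches the paper's approach: the paper's proof simply lists the class-preserving moves (per-row variable permutation; permutation of rows and of left columns; deletion of duplicate rows or left columns; deletion of an all-$\ast$ left column) and declares the rest routine, which is exactly the bookkeeping you spell out. The ``delicate point'' you flag in your third paragraph is not actually needed: since you start from the canonical $M$ and argue each clause by contradiction via a single move, there is no sequential normalization to maintain --- for instance, swapping two adjacent out-of-order rows in the first block strictly lowers the linearization at the first column where the two rows differ, independently of whether the left columns are already sorted, so no compatibility check between clauses is required.
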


The proof of this lemma is similar to the proof of Lemma~4.1 in~\cite{HoefnagelJacqminJanelidze}.

\begin{proof}
This follows easily from the fact that, given a matrix in some $\M(n',m',k')$, one can, without changing the corresponding matrix property, do the following:
\begin{itemize}
\item in each row, permute the variables $x_1,\dots,x_{k'}$;
\item permute the rows and the left columns;
\item delete the duplicated rows and left columns;
\item delete a left column which contains only~$\ast$'s.\qedhere
\end{itemize}
\end{proof}

We now show the computations of the posets $\mathsf{Mclex}_\ast[n,m,k]$ for various values of the parameters $n,m,k$ obtained by the computer. When displaying such a poset $\mathsf{Mclex}_\ast[n,m,k]$, we represent each matrix class by the $(n,m,k)$-canonical matrix giving rise to the considered matrix class. This matrix will be represented by a gird where the right column of the matrix is highlighted in gray and where each variable $x_i$ has been represented by its index~$i$. An arrow from a matrix $M$ to a matrix $N$ represents the inclusion $\mathsf{mclex}_\ast\{M\}\subseteq\mathsf{mclex}_\ast\{N\}$. We do not draw an arrow if it can be obtained by a path of arrows (we thus display the reflexive and transitive reduction of the poset).

We start with Figure~\ref{figure 2,3,2} which represents the poset $\mathsf{Mclex}_\ast[2,3,2]$. According to our convention, the bottom element of that poset is the matrix class generated by the trivial matrix $\left[\begin{array}{c|c} \, & x_1\end{array}\right]$, i.e., the collection of categories equivalent to~$\mathbf{1}$. The top element is the matrix class generated by the anti-trivial matrix $\left[\begin{array}{c|c} \, & \ast\end{array}\right]$, i.e., the collection of all finitely complete pointed categories. In view of Example~\ref{example matrices}, the matrix on the left of the second row (from the top) represents the collection of subtractive categories and the matrix on the right of the second row represent the collection of unital categories. The matrix in the third row represents the collection of strongly unital categories and the matrix in the fourth row represents the collection of pointed Mal'tsev categories. This poset already illustrates a result from~\cite{Janelidze2005}, where it is shown that a finitely complete pointed category is strongly unital if and only if it is subtractive and unital. This situation is somehow special as we recall that in general we do not know whether the intersection of two matrix classes is again a matrix class. It also illustrates that pointed Mal'tsev categories are in particular strongly unital, which was shown in~\cite{Bourn1996}. We also computed the posets $\mathsf{Mclex}_\ast[2,14,3]$ and $\mathsf{Mclex}_\ast[2,10,4]$ which are both equal to $\mathsf{Mclex}_\ast[2,3,2]$. We therefore conjecture that Figure~\ref{figure 2,3,2} shows all matrix classes induced by two-row matrices, although we have not been able to formally prove it yet.
\begin{figure}[htb!]
\centering
    \includegraphics[width=143pt]{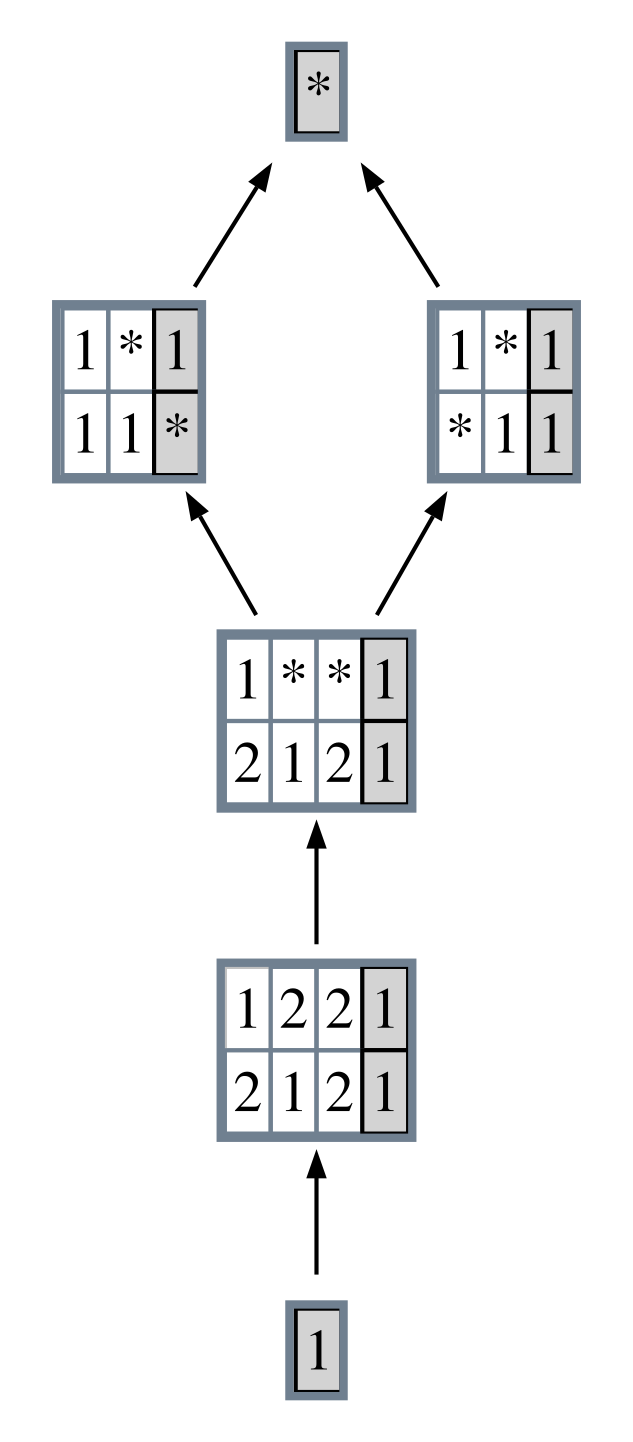}
    \caption{Hasse diagram of the poset $\mathsf{Mclex}_\ast[2,3,2]$.}
    \label{figure 2,3,2}
\end{figure}

We then tackle the problem to compute all matrix classes induced by matrices with two left columns. In view of the discussion at the end of Section~\ref{section the posets and bourn localizations}, it is enough to compute the poset $\mathsf{Mclex}_\ast[6,2,1]$, which becomes an easy task for the computer. The results are displayed in Figure~\ref{figure 6,2,1}. Let us notice that all matrices represented there have at most three rows, which indicates that the bound of $2m^m-(m-1)^m-1$ from Section~\ref{section the posets and bourn localizations} might be far from optimal.
\begin{figure}[htb!]
\centering
    \includegraphics[width=227pt]{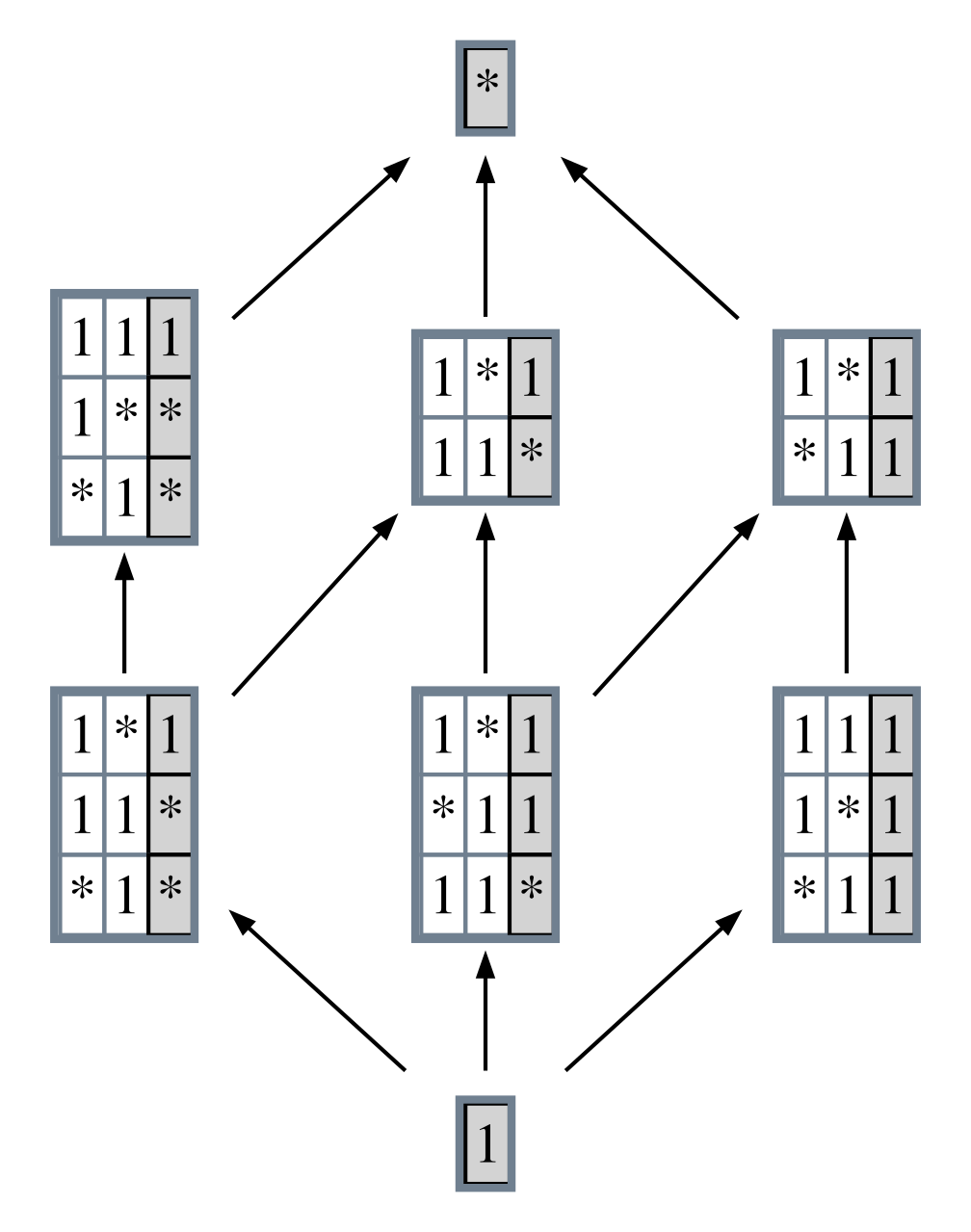}
    \caption{Hasse diagram of the poset $\mathsf{Mclex}_\ast[6,2,1]$.}
    \label{figure 6,2,1}
\end{figure}

Our next display is Figure~\ref{figure 3,6,1} which represents the poset $\mathsf{Mclex}_\ast[3,6,1]$. Since $6=(1+1)^3-2$, this poset contains all matrix classes generated by a matrix with three rows and one variable. The blue boxes group together the matrix classes which have same Bourn localization in $\mathsf{Mclex}$, i.e., same image under $\mathsf{Loc}\colon\mathsf{Mclex}_\ast\to\mathsf{Mclex}$. The words appearing in some of these boxes indicate the common Bourn localization of the matrix classes appearing in each of these boxes (see Example~\ref{example matrices}). These conventions will be used also in subsequent figures as well. By a \emph{minority category}, we mean here a finitely complete category which has $M$-closed relations for the non-pointed matrix
$$M=\left[\begin{array}{ccc|c} x_1 & x_2 & x_2 & x_1\\ x_2 & x_1 & x_2 & x_1 \\ x_2 & x_2 & x_1 & x_1 \end{array}\right],$$
extending the notion of a variety of universal algebras with a minority term. It can be easily proved using our algorithm (see below) that the first matrix in the eighth row (from the top) of Figure~\ref{figure 3,6,1} represents the matrix class of strongly unital categories, represented by another matrix in the third row of Figure~\ref{figure 2,3,2} (which means in particular that the $(2,3,2)$-canonical matrix for this matrix class is not the same as the $(3,6,1)$-canonical matrix). This leads to the following remark.

\begin{remark}
The collection of strongly unital categories is
$$\mathsf{mclex}_\ast\left\{\left[\begin{array}{ccc|c} x_1 & \ast & \ast & x_1\\ x_2 & x_1 & x_2 & x_1 \end{array}\right]\right\} = \mathsf{mclex}_\ast\left\{\left[\begin{array}{ccc|c} x_1 & x_1 & \ast & x_1\\ \ast & \ast & x_1 & x_1 \\ x_1 & \ast & x_1 & \ast \end{array}\right]\right\}.$$
\end{remark}

In order to illustrate the algorithm, let us display some tableaux that represent the proof of each inclusion of the above equality. These tableaux will be called \emph{$\text{lex}_{\ast}$-tableaux} (analogously to the `lex-tableaux' from~\cite{HoefnagelJacqminJanelidze}, where as usual `lex' abbreviates `left exact', a synonym of `finitely complete'). A proof of the inclusion
$$\mathsf{mclex}_\ast\left\{\left[\begin{array}{ccc|c} x_1 & \ast & \ast & x_1\\ x_2 & x_1 & x_2 & x_1 \end{array}\right]\right\} \subseteq \mathsf{mclex}_\ast\left\{\left[\begin{array}{ccc|c} x_1 & x_1 & \ast & x_1\\ \ast & \ast & x_1 & x_1 \\ x_1 & \ast & x_1 & \ast \end{array}\right]\right\}$$
can be represented by the $\text{lex}_{\ast}$-tableau
$$\begin{array}{ccc|c||ccc|c} 
     &      &      &      & x_1  & x_1  & \ast & x_1  \\
x_1  & \ast & \ast & x_1  & \ast & \ast & x_1  & x_1  \\
x_2  & x_1  & x_2  & x_1  & x_1  & \ast & x_1  & \ast \\
\hline
     &      &      &      & \ast &      &      &      \\
     &      &      &      & \ast &      &      &      \\
     &      &      &      & \ast &      &      &      \\
\hline
x_1  & \ast & x_1  & \ast & \ast &      &      &      \\
\ast & \ast & \ast & \ast & \ast &      &      &      \\
x_1  & \ast & \ast & x_1  & x_1  &      &      &      \\
\hline
\ast & x_1  & \ast & x_1  & x_1  &      &      &      \\
x_1  & \ast & \ast & x_1  & x_1  &      &      &      \\
x_1  & \ast & x_1  & \ast & \ast &      &      &      \\
\end{array}$$
where the double vertical line separates the two given matrices. In such a tableau, the matrix class induced by the matrix in the top left is being proved to be included in the matrix class induced by the matrix in the top right. The columns below the first horizontal line and on the right of the double vertical line are being added to the left columns of the second matrix by the algorithm. In the first step (just below the first horizontal line), we add a column of $\ast$'s as required by the algorithm. Then, the added columns come from row-wise interpretations of the first matrix; these row-wise interpretations are represented on the left of the double vertical line. Reaching the right column of the second matrix in the last step proves the desired inclusion of matrix classes. The following $\text{lex}_{\ast}$-tableau
$$\begin{array}{ccc|c||ccc|c} 
x_1  & x_1  & \ast & x_1  &      &      &      &      \\
\ast & \ast & x_1  & x_1  & x_1  & \ast & \ast & x_1  \\
x_1  & \ast & x_1  & \ast & x_2  & x_1  & x_2  & x_1  \\
\hline
     &      &      &      & \ast &      &      &      \\
     &      &      &      & \ast &      &      &      \\
\hline
\ast & \ast & x_1  & x_1  & x_1  &      &      &      \\
x_2  & \ast & x_2  & \ast & \ast &      &      &      \\
\hline
x_1  & x_1  & \ast & x_1  & x_1  &      &      &      \\
\ast & \ast & x_1  & x_1  & x_1  &      &      &      \\
\end{array}$$
represents a proof of
$$\mathsf{mclex}_\ast\left\{\left[\begin{array}{ccc|c} x_1 & x_1 & \ast & x_1\\ \ast & \ast & x_1 & x_1 \\ x_1 & \ast & x_1 & \ast \end{array}\right]\right\} \subseteq \mathsf{mclex}_\ast\left\{\left[\begin{array}{ccc|c} x_1 & \ast & \ast & x_1\\ x_2 & x_1 & x_2 & x_1 \end{array}\right]\right\}.$$

Figure~\ref{figure 3,6,1} illustrates the characterization of Mal'tsev categories in terms of the fibration of points from~\cite{Bourn1996} and~\cite{Janelidze2005} recalled in Section~\ref{section the posets and bourn localizations}. From Remark~\ref{remark restriction loc}, we know that $\mathsf{Loc}$ restricts to a surjective order-preserving function $\mathsf{Loc}\colon\mathsf{Mclex}_\ast[3,6,1]\twoheadrightarrow\mathsf{Mclex}[3,7,2]$ where $\mathsf{Mclex}[3,7,2]$ has been displayed in Figure~2 of~\cite{HoefnagelJacqminJanelidze} and contains all $\mathsf{mclex}\{M\}$ for a non-pointed matrix $M$ with three rows, two variables and at least one left column. Therefore, we can recover from Figure~\ref{figure 3,6,1} of the present paper that $\mathsf{Mclex}[3,7,2]$ has $13$ elements (represented by the $13$ blue boxes). Moreover, since $\mathsf{Loc}$ is an order preserving function, we know that an arrow in Figure~\ref{figure 3,6,1} displayed from a matrix in some box to a matrix in another box induces an arrow in the same direction between the corresponding elements of $\mathsf{Mclex}[3,7,2]$. However, Figure~\ref{figure 3,6,1} does not contain all the information about $\mathsf{Mclex}[3,7,2]$ as $\mathsf{Loc}$ does not reflect the order. For instance, any finitely complete arithmetical category is a minority category, but no arrow in Figure~\ref{figure 3,6,1} indicates this implication. The `missing arrow' appears in Figure~\ref{figure 3,3,2}.
\begin{figure}[phtb!]
\centering
    \includegraphics[width=484pt]{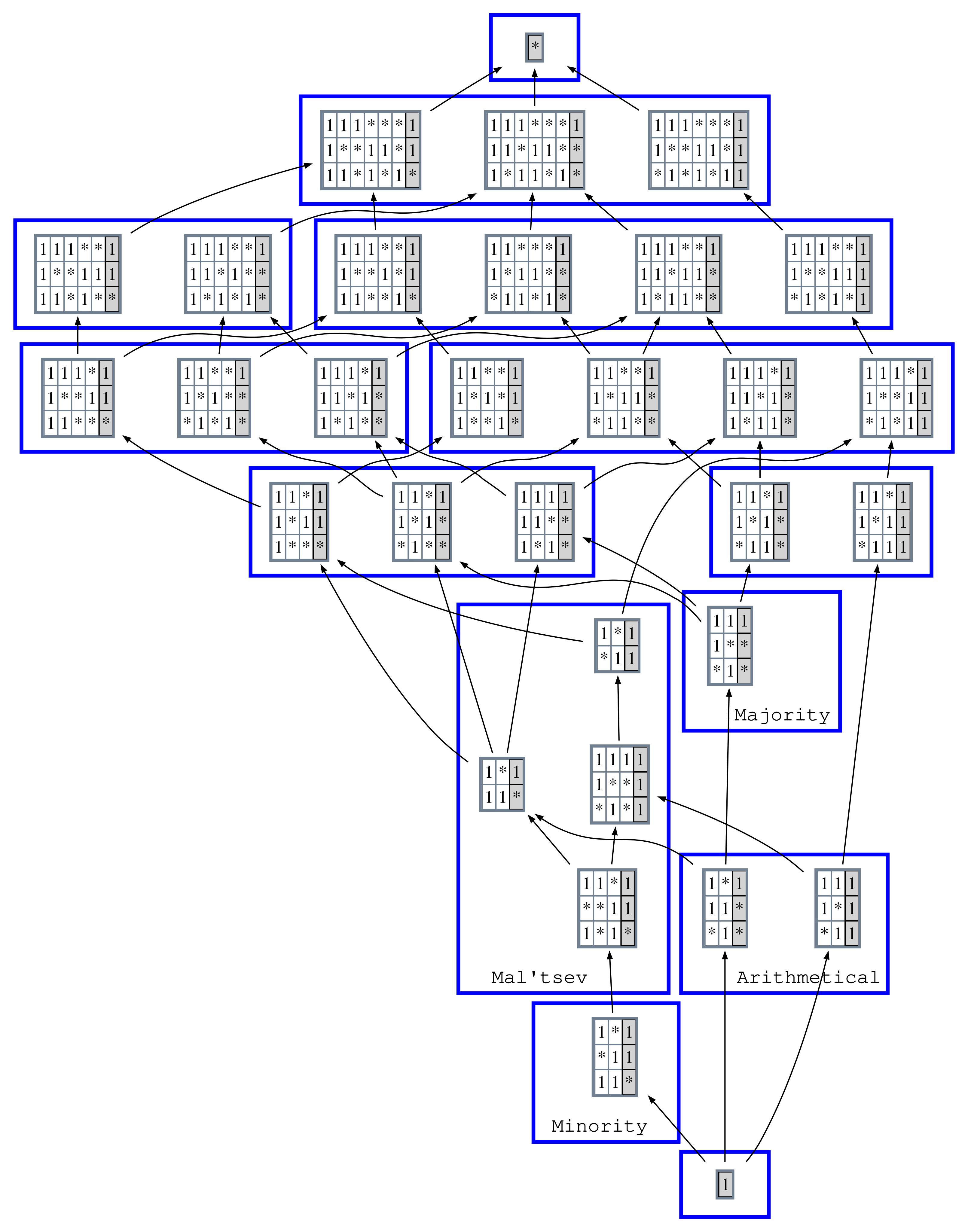}
    \caption{Hasse diagram of the poset $\mathsf{Mclex}_\ast[3,6,1]$.}
    \label{figure 3,6,1}
\end{figure}

Let us now consider Figure~\ref{figure 3,3,2} which represents $\mathsf{Mclex}_\ast[3,3,2]$. This figures illustrates Remark~\ref{remark localization classes} since each non-pointed matrix represented in that figure is the bottom element of the subposet of $\mathsf{Mclex}_\ast$ formed by the blue box containing it. Moreover, the top element of $\mathsf{Mclex}_\ast$ is the only one in its blue box and similarly for the bottom element of $\mathsf{Mclex}_\ast$.
\begin{figure}[phtb!]
\centering
    \includegraphics[width=484pt]{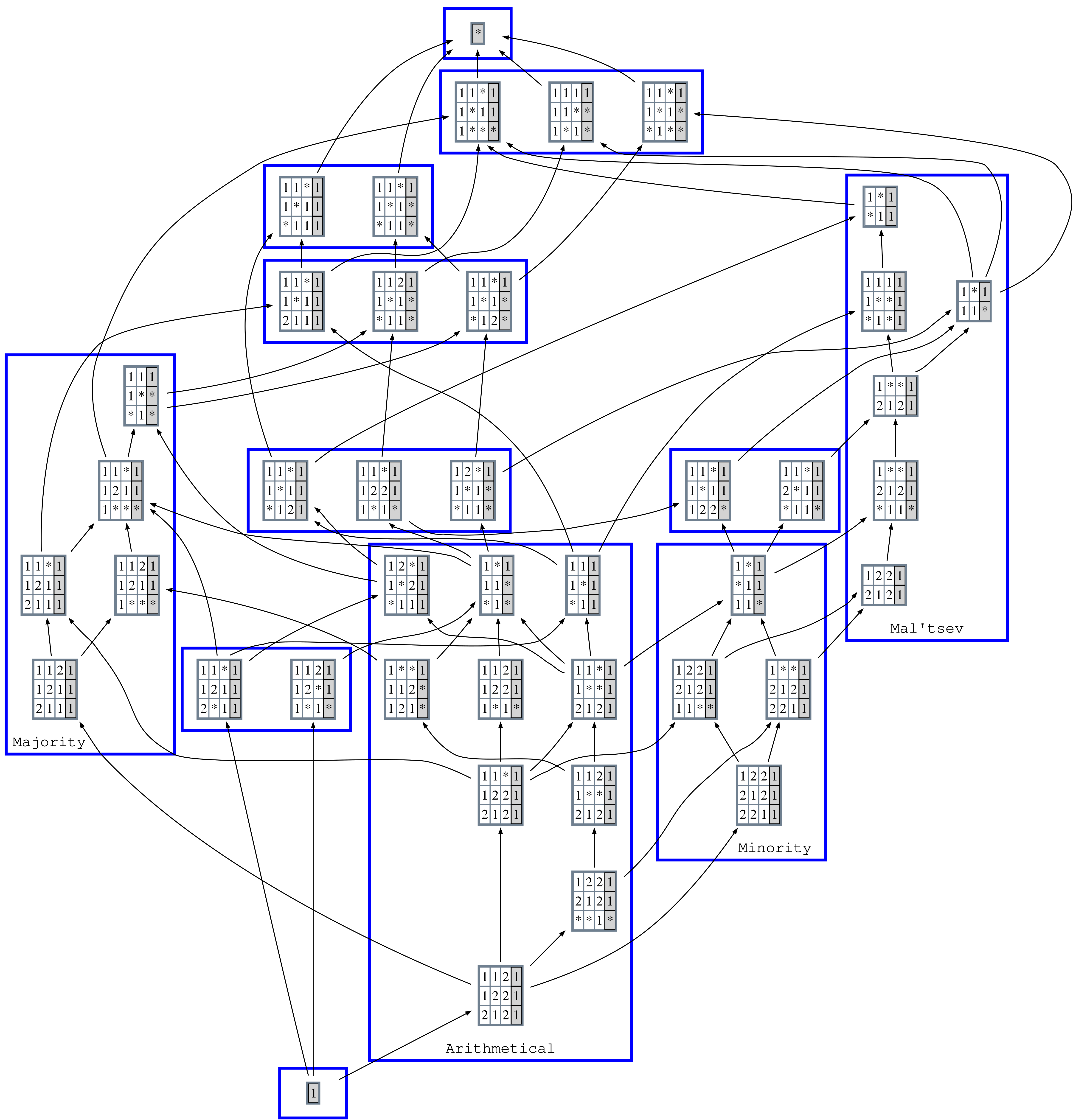}
    \caption{Representation of the poset $\mathsf{Mclex}_\ast[3,3,2]$.}
    \label{figure 3,3,2}
\end{figure}

We have also been able to compute $\mathsf{Mclex}_\ast[3,m,2]$ for $m\leqslant 6$, but $\mathsf{Mclex}_\ast[3,4,2]$ is already too big to reasonably fit on one page. We have instead produced Figure~\ref{figure sizes of 3xmx2} displaying the size of these posets.
\begin{figure}[htb!]
	\centering
$\begin{array}{|r|c|c|c|c|c|c|c|}
	\hline
	m= & 0 & 1 & 2 & 3 & 4 & 5 & 6\\ \hline
	 |\mathsf{Mclex}_\ast[3,m,2]|= & 2 & 2 & 8 &  42 &  217 &  1137 &  5100\\ \hline
	\end{array}$
	\caption{Size of $\mathsf{Mclex}_\ast[3,m,2]$ for $m\leqslant 6$.}
	\label{figure sizes of 3xmx2}
\end{figure}

Figure~\ref{figure 4,3,1} displays the poset $\mathsf{Mclex}_\ast[4,3,1]$. As we can see, even when considering one variable matrices, the poset can quickly get complex.
\begin{figure}[phtb!]
\centering
    \includegraphics[width=484pt]{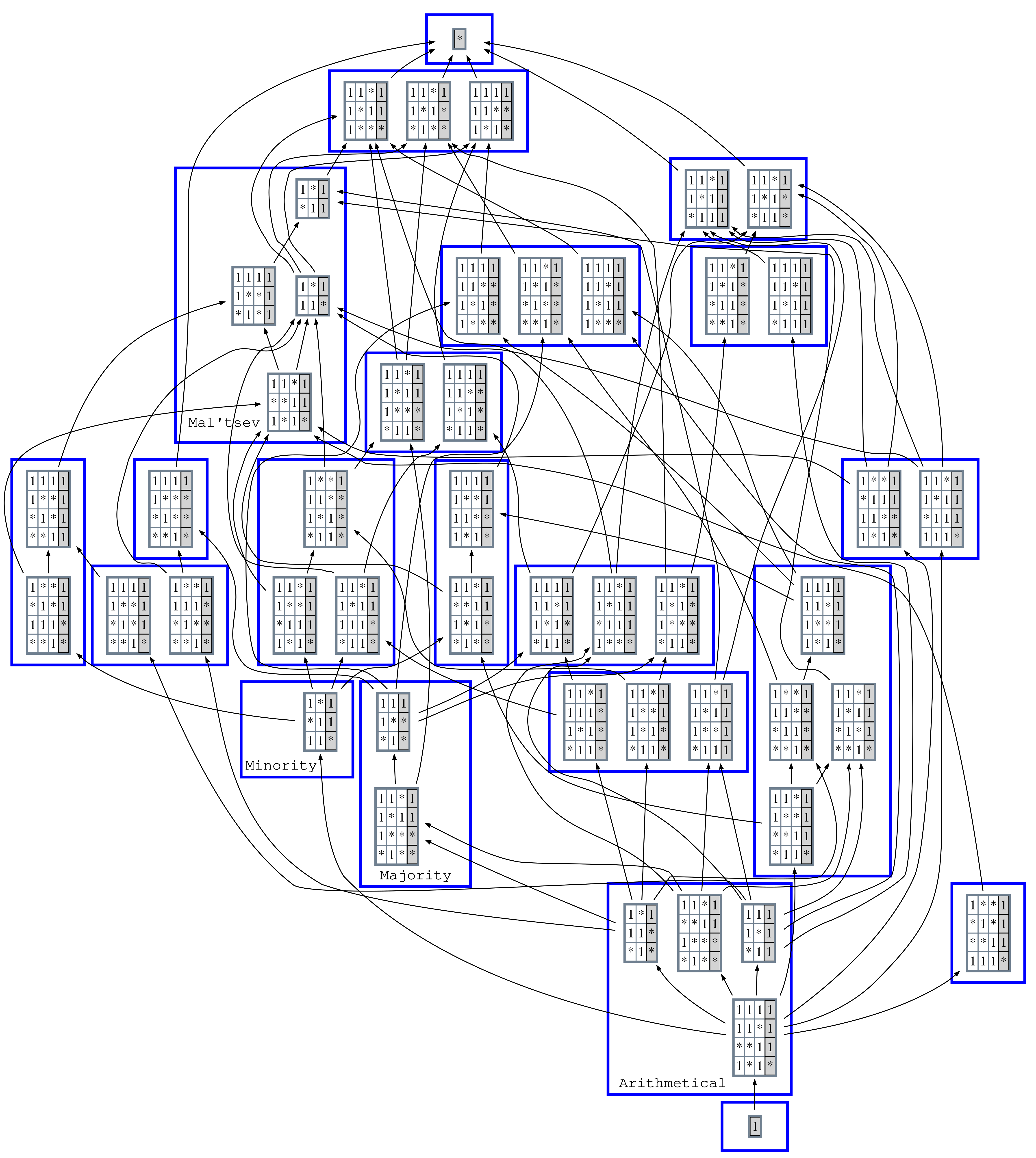}
    \caption{Hasse diagram of the poset $\mathsf{Mclex}_\ast[4,3,1]$.}
    \label{figure 4,3,1}
\end{figure}

We have also computed the posets $\mathsf{Mclex}_\ast[4,m,1]$ for any value of~$m$. However, for $m=4$, this poset already contains $156$ elements and cannot be represented with enough readability on a single page. We have instead displayed the size of these posets on Figure~\ref{figure sizes of 4xmx1}. Let us remark here that the matrix classes of unital, strongly unital and subtractive categories respectively are elements of $\mathsf{Mclex}_\ast[4,14,1]$, while the matrix classes of Mal'tsev, majority, arithmetical and minority pointed categories respectively are not elements of $\mathsf{Mclex}_\ast[4,14,1]$.
\begin{figure}[hbt!]
	\centering
	\begin{tikzpicture}[yscale=0.6, xscale=0.6]
	\begin{axis}[xlabel={$m$},ylabel={$|\mathsf{Mclex}_\ast[4,m,1]|$}]
	\addplot[color=black,mark=*] coordinates {
		(0,2)
		(1,2)
		(2,8)
		(3,48)
		(4,156)
		(5,453)
		(6,1066)
		(7,1953)
		(8,2841)
		(9,3502)
		(10,3822)
		(11,3957)
		(12,4007)
		(13,4023)
		(14,4027)
		(15,4027)
	};
	\end{axis}
	\end{tikzpicture}
	$\quad\quad$
	\begin{tikzpicture}[yscale=0.7, xscale=0.7]
    \begin{axis}[xlabel={$m$},ylabel={$|\mathsf{Mclex}_\ast[4,m,1]|-|\mathsf{Mclex}_\ast[4,m-1,1]|$}]
	\addplot[color=black,mark=*] coordinates {
		(1,0)
		(2,6)
		(3,40)
		(4,108)
		(5,297)
		(6,613)
		(7,887)
		(8,888)
		(9,661)
		(10,320)
		(11,135)
		(12,50)
		(13,16)
		(14,4)
		(15,0)
	};
	\end{axis}
    \end{tikzpicture}
	
	\begin{adjustwidth}{-36pt}{-36pt}
	$\begin{array}{|r|c|c|c|c|c|c|c|c|c|c|c|c|c|c|c|c|}
	\hline
	m= & 0 & 1 & 2 & 3 & 4 & 5 & 6 & 7 & 8 & 9 & 10 & 11 & 12 & 13 & 14+\\ \hline
	 |\mathsf{Mclex}_\ast[4,m,1]|= & 2 & 2 & 8 &  48 & 156 & 453 & 1066 & 1953 & 2841 & 3502 & 3822 & 3957 & 4007 & 4023 & 4027\\ \hline
	\end{array}$
	\end{adjustwidth}
	\caption{Size of $\mathsf{Mclex}_\ast[4,m,1]$.}
	\label{figure sizes of 4xmx1}
\end{figure}

We conclude the paper with the representation of some subposets of some $\mathsf{Mclex}_\ast[n,m,k]$ formed by elements with the same Bourn localization. We start with Figure~\ref{figure delocalization Maltsev 3,4,2} which displays the subposet of $\mathsf{Mclex}_\ast[3,4,2]$ formed by matrix classes whose Bourn localization is the collection of Mal'tsev categories. The analogous poset defined by replacing $\mathsf{Mclex}_\ast[3,4,2]$ with $\mathsf{Mclex}_\ast[3,5,2]$ contains $268$ elements and is too big to be reasonably displayed on a single page. As we already know from Remark~\ref{remark localization classes}, the matrix class of Mal'tsev pointed categories is the bottom element of this subposet of $\mathsf{Mclex}_\ast[3,4,2]$. The two maximal elements of this subposet are the matrix classes of unital and subtractive categories. They intersection (i.e., the matrix class of strongly unital categories) is represented by the left matrix in the seventh row of that figure. Figure~\ref{figure delocalization Maltsev 3,4,2} gives thus many more characterizations of Mal'tsev categories in terms of the fibres of the fibration of points in the style of~\cite{Bourn1996,Janelidze2005}.
\begin{figure}[phtb!]
\centering
    \includegraphics[width=484pt]{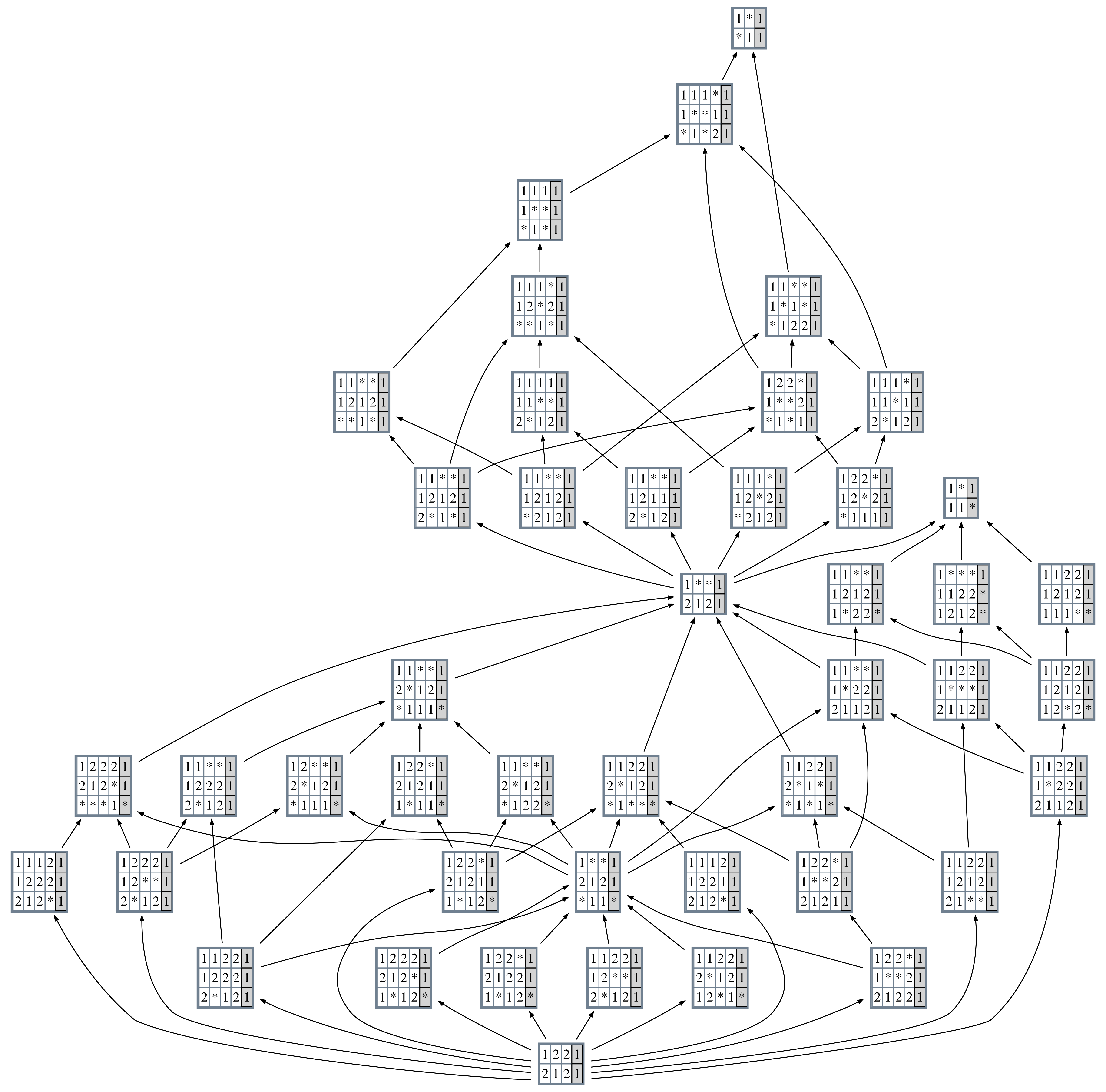}
    \caption{Hasse diagram of the subposet of $\mathsf{Mclex}_\ast[3,4,2]$ formed by matrix classes whose Bourn localization is the collection of Mal'tsev categories.}
    \label{figure delocalization Maltsev 3,4,2}
\end{figure}

Figure~\ref{figure delocalization Maltsev 4,14,1} represents the subposet of $\mathsf{Mclex}_\ast[4,14,1]$ formed by matrix classes whose Bourn localization is the collection of Mal'tsev categories. Since the matrix class of pointed Mal'tsev categories does not belong to $\mathsf{Mclex}_\ast[4,14,1]$, it is not represented on this figure. The bottom element of this subposet of $\mathsf{Mclex}_\ast[4,14,1]$ is the matrix class of strongly unital categories, represented here by its $(3,3,1)$-canonical matrix. Again, the two maximal elements of this subposet are the matrix classes of unital and subtractive categories. Let us remark that, in both Figures~\ref{figure delocalization Maltsev 3,4,2} and~\ref{figure delocalization Maltsev 4,14,1}, the number of elements between the matrix class of strongly unital categories and the matrix class of unital categories is much bigger than the number  of elements between the matrix class of strongly unital categories and the matrix class of subtractive categories. We can also observe that, although we considered all matrix classes in $\mathsf{Mclex}_\ast[4,14,1]$ whose Bourn localization is the collection of Mal'tsev categories, they are all represented by matrices with at most seven left columns.
\begin{figure}[phtb!]
\centering
    \includegraphics[width=484pt]{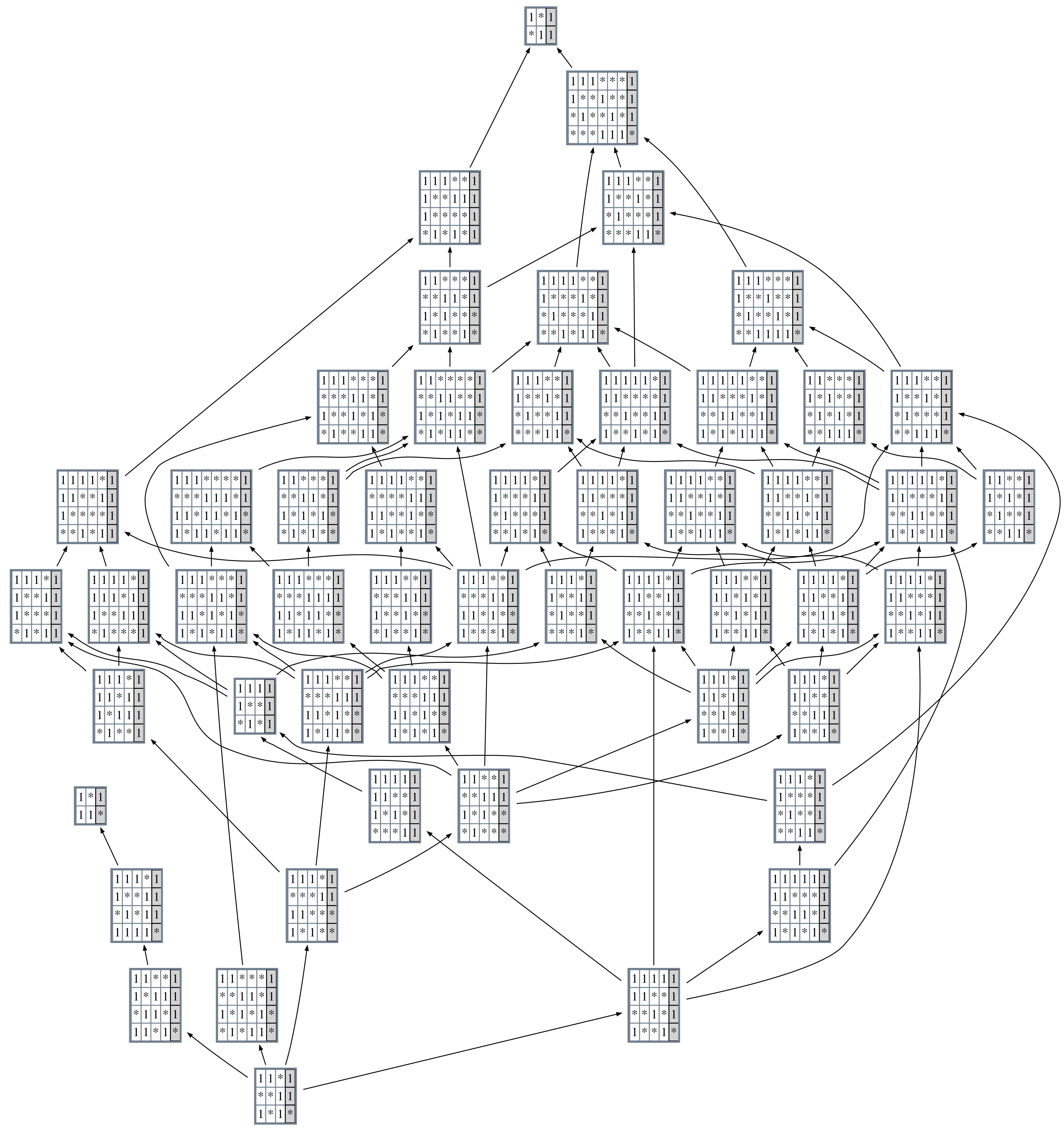}
    \caption{Hasse diagram of the subposet of $\mathsf{Mclex}_\ast[4,14,1]$ formed by matrix classes whose Bourn localization is the collection of Mal'tsev categories.}
    \label{figure delocalization Maltsev 4,14,1}
\end{figure}

There are $123$ matrix classes in $\mathsf{Mclex}_\ast[3,6,2]$ whose Bourn localization is the collection of finitely complete arithmetical categories, which is too many to draw them on a single page with enough readability. Instead, we show in Figure~\ref{figure delocalization arithmetical 3,5,2} the subposet of $\mathsf{Mclex}_\ast[3,5,2]$ formed by those matrix classes whose Bourn localization is the collection of finitely complete arithmetical categories. As expected, the bottom element of this subposet is the matrix class of arithmetical finitely complete pointed categories. There are only four matrix classes in $\mathsf{Mclex}_\ast[4,14,1]$ whose Bourn localization is the collection of finitely complete arithmetical categories. Since they can all be represented by a matrix in $\M(4,3,1)$, they appear in the same blue box in Figure~\ref{figure 4,3,1}.
\begin{figure}[phtb!]
\centering
    \includegraphics[width=484pt]{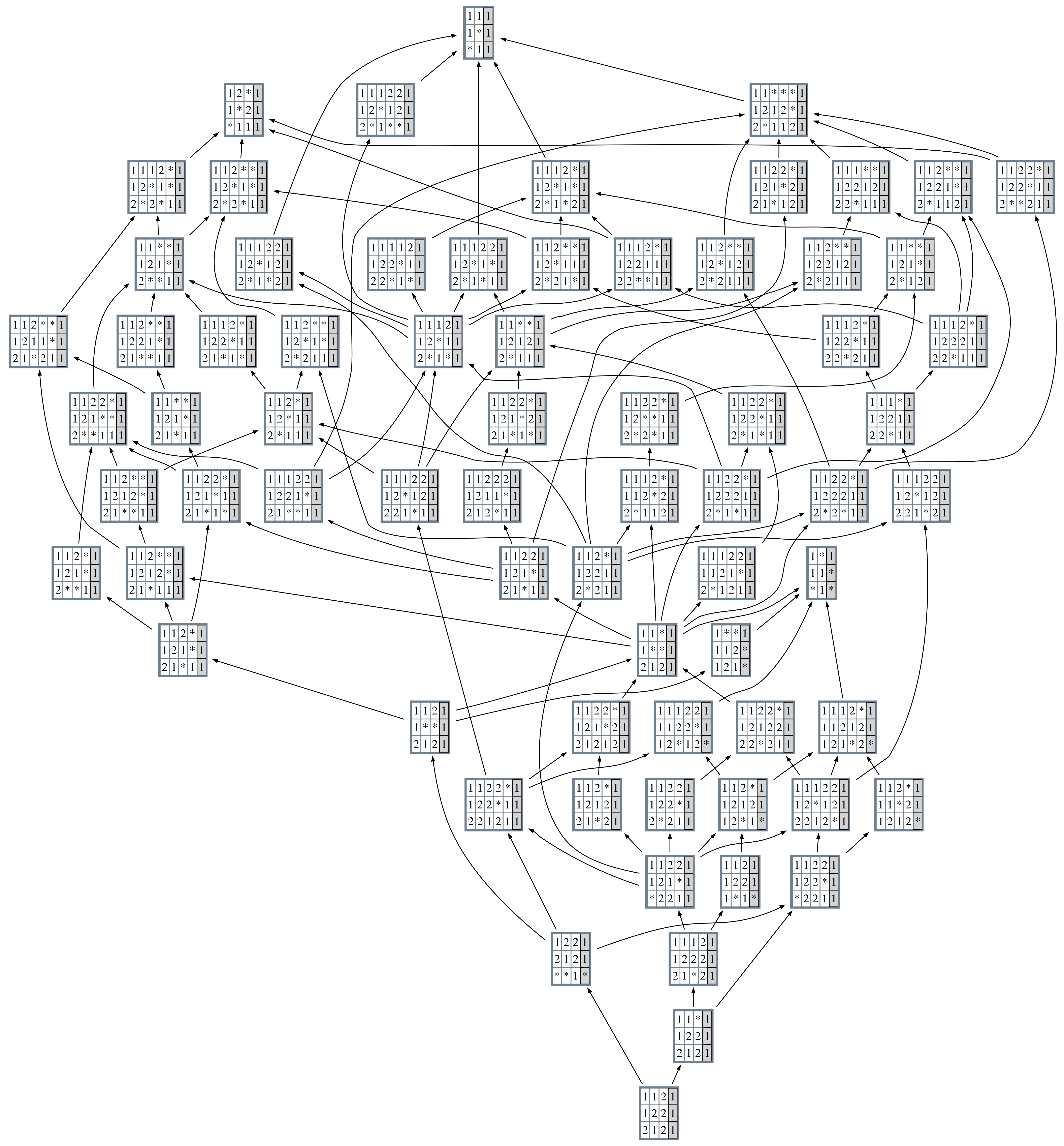}
    \caption{Hasse diagram of the subposet of $\mathsf{Mclex}_\ast[3,5,2]$ formed by matrix classes whose Bourn localization is the collection of (finitely complete) arithmetical categories.}
    \label{figure delocalization arithmetical 3,5,2}
\end{figure}

Figure~\ref{figure delocalization majority 3,5,2} represents the subposet of $\mathsf{Mclex}_\ast[3,5,2]$ formed by matrix classes whose Bourn localization is the collection of majority categories. The subposet of $\mathsf{Mclex}_\ast[3,6,2]$ defined analogously having $89$ elements, we choose not to represent it here for the sake of readability. The subposet of $\mathsf{Mclex}_\ast[4,14,1]$ formed by matrix classes whose Bourn localization is the collection of majority categories has only $3$ elements and is displayed in Figure~\ref{figure delocalization majority 4,14,1}. Notice that all the matrix classes appearing there are represented by a matrix with at most four left columns.
\begin{figure}[phtb!]
\centering
    \includegraphics[width=475pt]{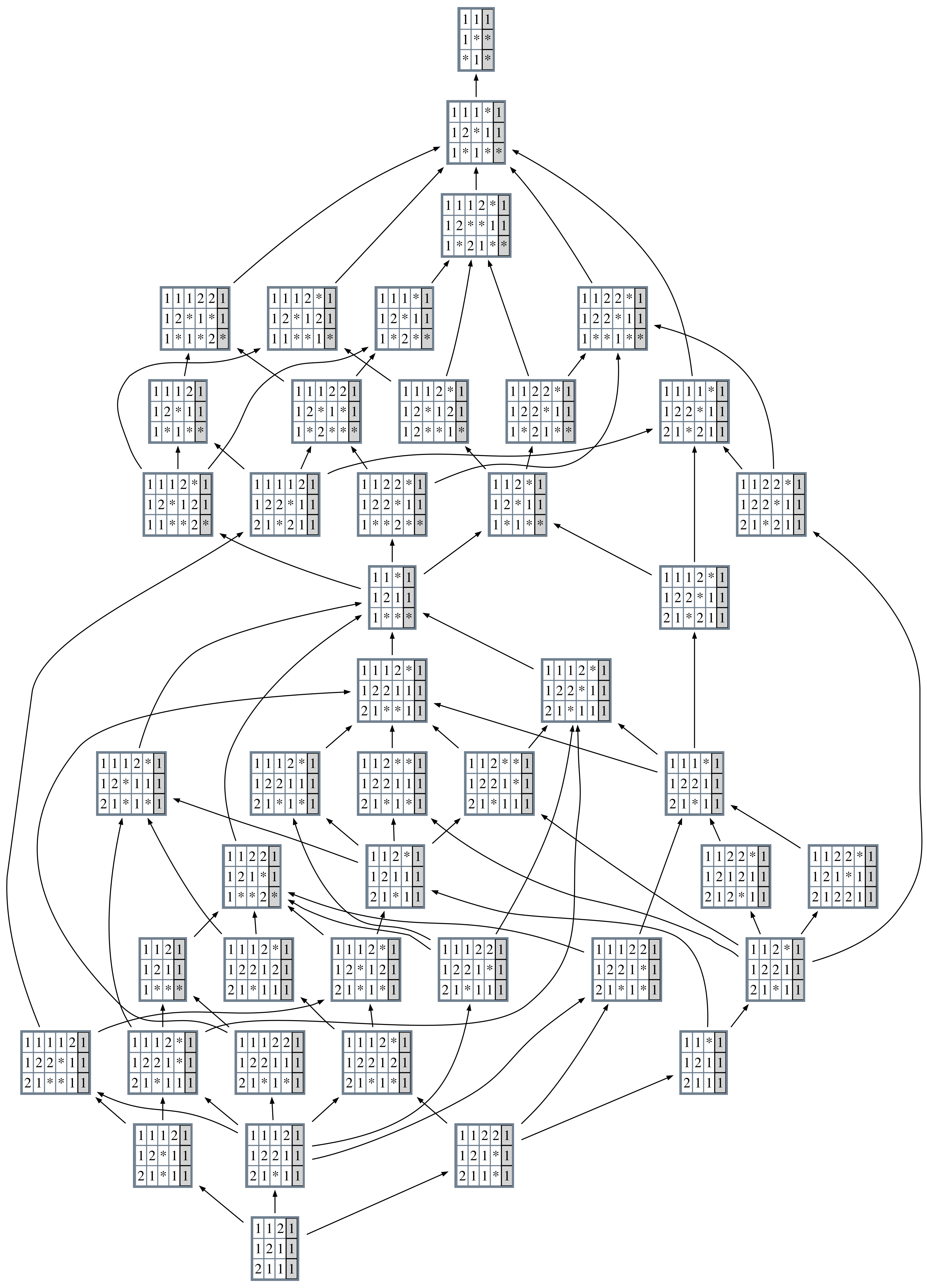}
    \caption{Hasse diagram of the subposet of $\mathsf{Mclex}_\ast[3,5,2]$ formed by matrix classes whose Bourn localization is the collection of majority categories.}
    \label{figure delocalization majority 3,5,2}
\end{figure}

\begin{figure}[htb!]
\centering
    \includegraphics[width=70pt]{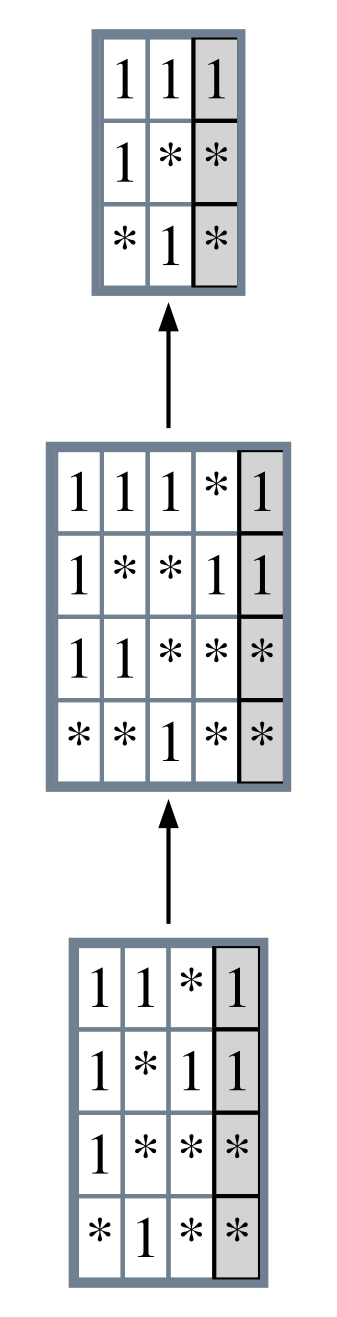}
    \caption{Hasse diagram of the subposet of $\mathsf{Mclex}_\ast[4,14,1]$ formed by matrix classes whose Bourn localization is the collection of majority categories.}
    \label{figure delocalization majority 4,14,1}
\end{figure}

We have found very few matrix classes whose Bourn localization is the collection of minority categories. There is only one which belongs to $\mathsf{Mclex}_\ast[4,14,1]$: it is represented by a matrix in $\M(3,2,1)$ in Figure~\ref{figure 4,3,1}. There are $12$ which belong to $\mathsf{Mclex}_\ast[3,6,2]$ and the corresponding poset has been represented in Figure~\ref{figure delocalization minority 3,6,2}. As we can notice, all these matrix classes actually belong to~$\mathsf{Mclex}_\ast[3,5,2]$.
\begin{figure}[htb!]
\centering
    \includegraphics[width=400pt]{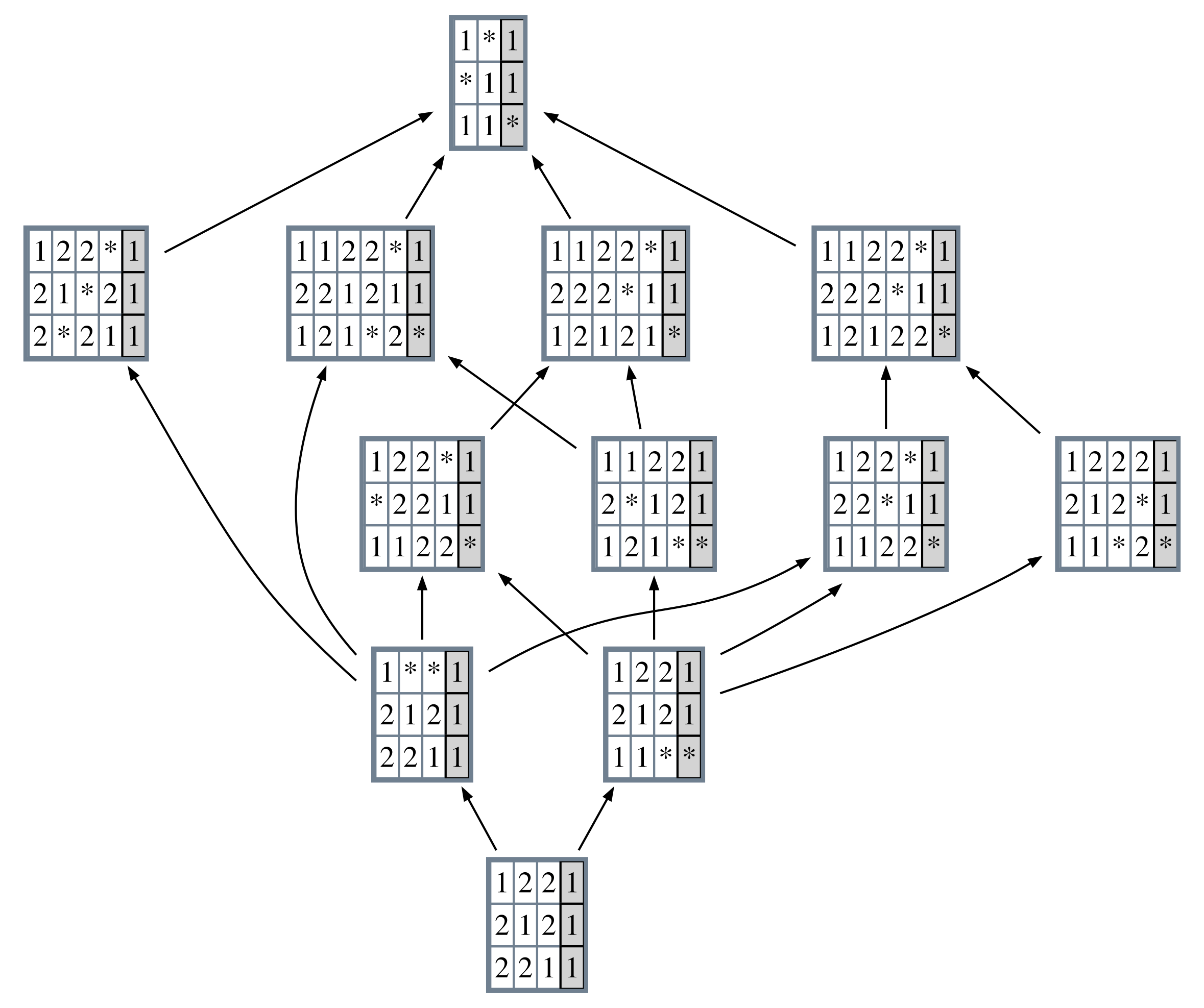}
    \caption{Hasse diagram of the subposet of $\mathsf{Mclex}_\ast[3,6,2]$ formed by matrix classes whose Bourn localization is the collection of minority categories.}
    \label{figure delocalization minority 3,6,2}
\end{figure}

\FloatBarrier

\subsection*{Acknowledgments}

The authors would like to warmly thank Zurab Janelidze for interesting discussions on the topic presented here. They are also grateful to the anonymous referee for his/her remarks to improve the reader's experience. The first author is grateful to the National Graduate Academy for Mathematical and Statistical Sciences (NGA(MaSS)) for its generous financial support. The second author also acknowledges the FNRS for its generous support.

\subsection*{Data availability}

Data sharing not applicable to this article as no datasets were generated or analysed during the current study.


\vspace{30pt}
\begin{tabular}{rl}
Email: & mhoefnagel@sun.ac.za\\
& pierre-alain.jacqmin@uclouvain.be
\end{tabular}


\begin{thebibliography}{99}
\addcontentsline{toc}{section}{References}

\bibitem{BGV1971} 
\textsc{M. Barr, P.A. Grillet and D.H. van Osdol}, Exact categories and categories of sheaves, \textit{Springer Lect. Notes Math.} \textbf{236} (1971).

\bibitem{Borceux1994b}
\textsc{F. Borceux}, Handbook of Categorical Algebra 2, \textit{Encyclopedia of Mathematics and Its Applications}, Cambridge University Press (1994).

\bibitem{Bourn1991}
\textsc{D. Bourn}, Normalization equivalence, kernel equivalence and affine categories, \textit{Springer Lect. Notes Math.} \textbf{1488} (1991), 43--62.

\bibitem{Bourn1996}
\textsc{D. Bourn}, Mal'cev categories and fibration of pointed objects, \textit{Appl. Categ. Struct.} \textbf{4} (1996), 307--327.

\bibitem{CLP}
\textsc{A. Carboni, J. Lambek and M.C. Pedicchio}, Diagram chasing in Mal'cev categories, \textit{J. Pure Appl. Algebra} \textbf{69} (1991), 271--284.

\bibitem{CPP}
\textsc{A. Carboni, M.C. Pedicchio and N. Pirovano}, Internal graphs and internal groupoids in Mal'cev categories, \textit{CMS Conf. Proc.} \textbf{13} (1992), 97--109.

\bibitem{GRT}
\textsc{M. Gran, D. Rodelo and I. Tchoffo Nguefeu}, Facets of congruence distributivity in Goursat categories, \textit{J. Pure Appl. Algebra} \textbf{224} (2020), 106380.

\bibitem{Hoefnagel2019}
\textsc{M. Hoefnagel}, Majority categories, \textit{Theory Appl. Categ.} \textbf{34} (2019), 249--268.

\bibitem{Hoefnagel2019b}
\textsc{M. Hoefnagel}, Products and coequalizers in pointed categories, \textit{Theory Appl. Categ.} \textbf{34} (2019), 1386--1400.

\bibitem{Hoefnagel2020}
\textsc{M. Hoefnagel}, Characterizations of majority categories, \textit{Appl. Categ. Struct.} \textbf{28} (2020), 113--134.

\bibitem{Hoefnagel2021}
\textsc{M. Hoefnagel}, Anticommutativity and the triangular lemma, \textit{Algebra Universalis} \textbf{82} (2021).

\bibitem{HJ}
\textsc{M. Hoefnagel and P.-A. Jacqmin}, When a matrix condition implies the Mal'tsev property, \textit{in preparation}.

\bibitem{HoefnagelJacqminJanelidze}
\textsc{M. Hoefnagel, P.-A. Jacqmin and Z. Janelidze}, The matrix taxonomy of finitely complete categories, \textit{Theory Appl. Categ.} \textbf{38} (2022), 737--790.

\bibitem{HJJW}
\textsc{M. Hoefnagel, P.-A. Jacqmin, Z. Janelidze and E. van der Walt}, On binary matrix properties, \textit{submitted to Quaestiones Mathematicae}.

\bibitem{ZJanelidze2003}
\textsc{Z. Janelidze}, Characterization of pointed varieties of universal algebras with normal projections, \textit{Theory Appl. Categ.} \textbf{11} (2003), 212--214.

\bibitem{JanelidzePhD}
\textsc{Z. Janelidze}, Normal product projections, subtractive categories, and elementary term conditions in universal and categorical algebra, \textit{PhD thesis, Tbilisi State University} (2005).

\bibitem{Janelidze2005}
\textsc{Z. Janelidze}, Subtractive categories, \textit{Appl. Categ. Struct.} \textbf{13} (2005), 343--350.

\bibitem{ZJanelidze2006a}
\textsc{Z. Janelidze}, Closedness properties of internal relations I: A unified approach to Mal'tsev, unital and subtractive categories, \textit{Theory Appl. Categ.} \textbf{16} (2006), 236--261.

\bibitem{ZJanelidze2006b}
\textsc{Z. Janelidze}, Closedness properties of internal relations II: Bourn localization, \textit{Theory Appl. Categ.} \textbf{16} (2006), 262--282.

\bibitem{Pedicchio1996}
\textsc{M.C. Pedicchio}, Arithmetical categories and commutator theory, \textit{Appl. Categ. Structures} \textbf{4} (1996), 297--305.

\bibitem{Weighill2017}
\textsc{T. Weighill}, Mal'tsev objects, $R_1$-spaces and ultrametric spaces, \textit{Theory Appl. Categ.} \textbf{32} (2017), 1485--1500.

\end{thebibliography}
\end{document}